\documentclass[12pt]{amsart}
\usepackage{amsmath, color}
\usepackage{amssymb}
\usepackage{amsfonts}
\usepackage{amsthm}
\usepackage{mathrsfs}
\usepackage[all]{xy}

\newcommand{\be}{\begin{equation}}
\newcommand{\ee}{\end{equation}}
\newtheorem{theorem}{Theorem}[section]
\newtheorem{lemma}[theorem]{Lemma}

\newtheorem{proposition}[theorem]{Proposition}

\theoremstyle{definition}
\newtheorem{definition}[theorem]{Definition}

\theoremstyle{remark}
\newtheorem{remark}[theorem]{Remark}

\numberwithin{equation}{section}

\setlength{\oddsidemargin}{0.25in}
\setlength{\evensidemargin}{0.25in}

\setlength{\textwidth}{6.5in}

\setlength{\topmargin}{-0.25in}

\setlength{\textheight}{8in}

\begin{document}

\title{R-matrix realization of two-parameter quantum affine algebra $U_{r,s}(\widehat{\mathfrak{gl}}_n)$}
\author{Naihuan Jing}
\address{NJ: 
Department of Mathematics, North Carolina State University, Raleigh, NC 27695, USA}
\author{Ming Liu$^*$}
\address{ML: School of Mathematics, South China University of Technology,
Guangzhou 510640, China}

\thanks{{\scriptsize
\hskip -0.4 true cm MSC (2010): Primary: 17B30; Secondary: 17B68.
\newline Keywords: Drinfeld realization, RTT formulism, quantum affine algebras, Gauss decomposition\\
$*$Corresponding author.
}}

\maketitle

\begin{abstract}
We introduce the two-parameter quantum affine algebra $U_{r,s}(\widehat{\mathfrak{gl}}_n)$ via the RTT realization. The Drinfeld realization
is given and the type $A$ quantum affine algebra is proved to be a special subalgebra of our extended algebra.
\end{abstract}

\section{Introduction}

Quantum groups can be studied by two methods algebraically. The first approach was adopted by
Drinfeld \cite{D1,D3} and Jimbo \cite{J} to define the quantum enveloping algebra $U_q(\mathfrak{g})$ as a $q$-deformation of the
enveloping algebra $U(\mathfrak g)$
in terms of the Chevalley generators and Serre relations based on the data coming from the corresponding Cartan matrix.
For the Yangian algebra $Y(\mathfrak g)$ and the quantum affine algebra $U_q(\widehat{\mathfrak g})$, Drinfeld \cite{D2}
gave another realization called the new realization, which is analogue to the loop realization of the classical affine Lie algebra.
Using the Drinfeld realization, one can classify finite dimensional representations of quantum affine algebras and Yangians.

The second approach to quantum groups has its origin from the quantum inverse scattering method developed by the Leningrad school.
In \cite{FRT} Faddeev, Reshetikhin and Takhtajan have shown that both the quantum enveloping algebras $U_q(\mathfrak g)$ and the
dual quantum groups for finite classical simple Lie algebras $\mathfrak g$ can be studied in the RTT method using the solutions $R$ of the
Yang-Baxter equation:
\begin{equation}\label{YB Eq}
R_{12}R_{13}R_{23}=R_{23}R_{13}R_{12}.
\end{equation}
In \cite{FRT2}, the R-matrix realization of quantum loop algebras was also studied.
Later, Resheti\-khin and Semenov-Tian-Shansky \cite{RS} gave the central extension of the previous construction
in \cite{FRT2}, which can be viewed as the affine analogue of the construction in \cite{FRT}. In \cite{DF}, Ding and Frenkel
proved the isomorphism between the R-matrix realization and Drinfeld realization of quantum affine
algebras by using the Gauss decomposition of the generating matrix composed of elements of quantum affine algebras, thus
Ding-Frenkel's method provides a natural way to get the Drinfeld realization from the R-matrix realization
of the quantum affine algebra.

Two-parameter general linear and special linear quantum groups were considered by Takeuchi \cite{T}
using generators and relations.
The two-parameter quantum enveloping algebras have later gained attention after
 Benkart and Witherspoon's work \cite{BW2} on the $(r,s)$-deformed quantum algebras associated with
 $\mathfrak{gl}_n$ and $\mathfrak{sl}_n$, where the quantum R-matrix and
the Drinfeld doubles were obtained (see also \cite{BGH}).
Hu, Rosso and Zhang \cite{HRZ} first studied the Drinfeld realization of two-parameter quantum affine
algebra for type A and constructed its quantum Lyndon basis.
In \cite{JZ}, Zhang and one of us have realized the basic representations
of the two-parameter simply laced
quantum toroidal algebras in terms of the Grothendieck ring of certain deformed wreath products in
the context of the McKay correspondence.
Recently, Fan and Li \cite{FL} have given a geometric realization of
the negative part of the two-parameter quantum group $U_{r, s}(\mathfrak{g})$ in a uniformed manner.
In Hill and Wang's categorification
of the covering quantum group \cite{HW} there exists the second parameter $\pi$ subject to $\pi^2=1$, which in spirit would correspond
to some specialization of certain two-parameter quantum group.

A natural question can be asked whether the 2-parameter quantum affine algebras can be formulated in the general 
framework of the quantum scattering method.
And an even more important question is where on earth one should deform the
affine relations in a natural and canonical way. Corresponding to the geometric constructions in
the finite dimensional cases \cite{FL, HW}, it seems that the most natural answer should rely on
if one can reconstruct the quantum affine algebras using the RTT method such as that in \cite{BK,M} for the case of Yangians.
Every indicator points to that these two-parameter quantum affine algebras should be associated with certain
spectral parameter dependent R-matrices. For example, the authors have shown recently that the 2-parameter quantum algebra
$U_{r,s}(\mathfrak{gl}_n)$ and its dual are indeed realized by the RTT method \cite{JL2}.
If the affine case can also be confirmed realizable by the RTT method, it will be natural to define quantum affine root vectors
and the Hopf algebra structure. It will also help to re-establish the Drinfeld realization of
the 2-parameter quantum affine algebra $U_{r, s}(\widehat{\mathfrak sl}_n)$ as a subalgebra, which have potential applications in geometric realizations and other
applications in mathematical physics models.

In this paper, we answer these questions and show that the 2-parameter quantum affine algebras
are indeed realized by the Reshetikhin-Seminov-Tian-Shanski method.
 We  use certain spectral parametric $R$-matrix obtained by Yang-Baxterization \cite{GWX} to
introduce and study the two-parameter quantum affine algebra $U_{r,s}(\widehat{\mathfrak{gl}}_n)$ and show that it
contains the 2-parameter quantum
affine algebra $U_{r,s}(\widehat{\mathfrak{sl}}_n)$ as a subalgebra. Moreover,
using the Gauss decomposition of the generating matrix of the R-matrix realization of two-parameter quantum
affine algebra, we study the commutation relations of the Gaussian generators and get a natural Drinfeld
realization of two-parameter quantum affine algebra for both $U_{r,s}(\widehat{\mathfrak{gl}}_n)$ and $U_{r,s}(\widehat{\mathfrak{sl}}_n)$,
which provide a natural explanation for the quantum algebra, and in particular, the Drinfeld-Serre relations.

The paper is organized as follows. In section 2
we recall the basic results and the R-matrix of two-parameter quantum group $U_{r,s}(\widehat{\mathfrak{gl}}_n)$ .
In section 3, we give the definition of algebra $U(R)$ using the Reshetikhin-Semenov's method and study its Gauss decomposition in terms of quasi-determinants.
In section 4, we study the commutation relations between Gaussian generators and give the Drinfeld realization of $U_{r,s}(\widehat{\mathfrak{gl}}_n)$.
In section 5, we give the Drinfeld realization of $U_{r,s}(\widehat{\mathfrak{sl}}_n)$.

\section{Two-parameter quantum group $U_{r,s}(\mathfrak{gl}_n)$}


We first recall the Drinfeld-Jimbo form of the two parameter
quantum algebras.  Let $\Pi=\{\alpha_j=\epsilon_j-\epsilon_{j+1}| j=1,2,...n-1\}$ and $\Phi=\{\epsilon_i-\epsilon_j|1\leq i\neq j\leq n\}$
be the root system and the set of simple roots of type $A_{n-1}$,
where $\{\epsilon_i\}$ is an orthonormal basis of $\mathbb C^n$.

\begin{definition}
$U_{r,s}(\mathfrak{gl}_n)$ is a unital associated algebra over $\mathbb{C}$ generated by
$e_j$, $f_j$, $(1\leq j< n)$, and $a_i^{\pm 1}$, $b_i^{\pm 1}$ $(1\leq i\leq n)$, and satisfy the following relations.
\begin{description}
  \item[R1] Commuting elements $a_i^{\pm 1}$, $b_i^{\pm 1}$ $(1\leq i\leq n)$ and
  $a_ia_i^{-1}=b_ib_i^{-1}=1$,

  \item[R2] $a_ie_j=r^{\langle \epsilon_i,\alpha_j\rangle}e_ja_i$, and $a_if_j=r^{-\langle \epsilon_i,\alpha_j\rangle}f_ja_i$,
  \item[R3] $b_ie_j=s^{\langle \epsilon_i,\alpha_j\rangle}e_jb_i$, and $b_if_j=s^{-\langle \epsilon_i,\alpha_j\rangle}f_jb_i$,

  \item[R4] $[e_i,f_j]=\frac{\delta_{ij}}{r-s}(a_ib_{i+1}-a_{i+1}b_i)$,
  \item[R5] $[e_i,e_j]=[f_i,f_j]=0$ if $|i-j|>1$,
  \item[R6] $e_i^2e_{i+1}-(r+s)e_ie_{i+1}e_i+rse_{i+1}e_i^2=0$,\\
            $e_{i+1}^2e_{i}-(r+s)e_{i+1}e_{i}e_{i+1}+rse_{i}e_{i+1}^2=0$,
  \item[R7] $f_i^2f_{i+1}-(r^{-1}+s^{-1})f_if_{i+1}f_i+r^{-1}s^{-1}f_{i+1}f_i^2=0$,\\
            $f_{i+1}^2f_{i}-(r^{-1}+s^{-1})f_{i+1}f_{i}f_{i+1}+r^{-1}s^{-1}f_{i}f_{i+1}^2=0$.

\end{description}
\end{definition}

The algebra $U_{r,s}(\mathfrak{sl}_n)$ is the subalgebra of $U_{r,s}(\mathfrak{gl}_n)$
generated by $e_j$, $f_j$ and $\omega_j$, $\omega'_j$ $(1\leq j<n)$, where
$\omega_j=a_jb_{j+1}$, $\omega'_j=a_{j+1}b_j$. These elements satisfy the relations (R5-R7) along with
\begin{description}
  \item[R'1] The $\omega_i^{\pm 1}$, $\omega'^{\pm 1}_j$ $(1\leq i,j< n)$ all commutate one another and
  $\omega_i\omega_i^{-1}=\omega'_i\omega'^{-1}_i=1$,

  \item[R'2] $\omega_ie_j=r^{\langle \epsilon_i,\alpha_j\rangle}s^{\langle \epsilon_{i+1},\alpha_j\rangle}e_j\omega_i$,
  and $\omega_if_j=r^{-\langle \epsilon_i,\alpha_j\rangle}s^{-\langle \epsilon_{i+1},\alpha_j\rangle}f_j\omega_i$,
  \item[R'3]  $\omega'_ie_j=r^{\langle \epsilon_{i+1},\alpha_j\rangle}s^{\langle \epsilon_{i},\alpha_j\rangle}e_j\omega'_i$,
  and $\omega'_if_j=r^{-\langle \epsilon_{i+1},\alpha_j\rangle}s^{-\langle \epsilon_{i},\alpha_j\rangle}f_j\omega'_i$,

  \item[R'4] $[e_i,f_j]=\frac{\delta_{ij}}{r-s}(\omega_i-\omega'_i)$.

\end{description}

%

The natural representation $V=\mathbb{C}^n$ of $U_{r,s}(\mathfrak{gl}_n)$ is given by the following action:
\begin{align*}
e_i&=E_{i,i+1}, f_{i}=E_{i+1,i}, \\
a_j&=rE_{jj}+\sum_{k\neq j}E_{kk},\\
b_j&=sE_{jj}+\sum_{k\neq j}E_{kk},
\end{align*}
where $1\leq i<n, 1\leq j\leq n$, and $E_{ij}$ are the unit matrices of size $n\times n$. Corresponding to the natural
representation $V$, Benkart and Witherspoon gave the following matrix $\hat{R}=\hat{R}_{VV}$ \cite{BW}:
\begin{equation}
\hat{R}=\sum_{i=1}^{n}E_{ii}\otimes E_{ii}+r\sum_{i<j}E_{ji}\otimes E_{ij}+s^{-1}\sum_{i<j}E_{ij}\otimes E_{ji}+(1-rs^{-1})\sum_{i<j}E_{jj}\otimes E_{ii},
\end{equation}
satisfying the braiding relation on the tensor power $V^{\otimes k}$:
\begin{align*}
\hat{R}_i\circ \hat{R}_{i+1}\circ\hat{R}_i&=\hat{R}_{i+1}\circ \hat{R}_{i}\circ \hat{R}_{i+1}, \qquad\mbox{for $1\leq i<k$},\\
\hat{R}_i\circ \hat{R}_{j}&=\hat{R}_{j}\circ \hat{R}_{i}, \qquad\mbox{for $|i-j|\geq 2$},
\end{align*}
where $\hat{R}_i=(id_V)^{i-1}\otimes \hat{R}\otimes (id_V)^{k-1}$.
It is easy to see that $\hat{R}$ satisfies the Hecke relation
\begin{equation}\label{minimal polynomial}
(\hat{R}-1)(\hat{R}+rs^{-1})=0.
\end{equation}

In \cite{GWX}, the authors presented the so-called Yang-Baxterization to construct the corresponding braiding relation with a spectral parameter.
Suppose the braiding $\hat{R}$ has two eigenvalues $\lambda_1, \lambda_2$, then the Yang-Baxterization of $\hat{R}$ recovers the
associated spectral parameter dependent braid group representation $\hat{R}(z)$
via
\begin{equation}\label{Yang-Baxterization}
\hat{R}(z)=\lambda_2^{-1}\hat{R}+z\lambda_1\hat{R}^{-1}.
\end{equation}
which satisfies the relation
$$
\hat{R}_1(z)\hat{R}_2(zw)\hat{R}_1(w)=\hat{R}_2(w)\hat{R}_1(zw)\hat{R}_2(z).
$$

Using the fact that $\hat{R}=\hat{R}_{VV}$ has eigenvalues 1 and $-rs^{-1}$ on $V\otimes V$, we obtain that
\begin{proposition}\cite{JZL}
For the braid group representation $\hat{R}=\hat{R}_{VV}$, the $\hat{R}(z)$ is given by
\begin{align}\nonumber
\hat{R}(z)&=(1-zrs^{-1})\sum_{i=1}^nE_{ii}\otimes E_{ii}+(1-z)(r\sum_{i>j}+s^{-1}\sum_{i<j})E_{ij}\otimes E_{ji}\\ \label{R(z)}
&+z(1-rs^{-1})\sum_{i<j}E_{ii}\otimes E_{jj}+(1-rs^{-1})\sum_{i>j}E_{ii}\otimes E_{jj}.
\end{align}
\end{proposition}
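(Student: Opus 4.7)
The plan is to apply the Yang-Baxterization formula \eqref{Yang-Baxterization} to $\hat R = \hat R_{VV}$ and then reduce everything using the Hecke relation \eqref{minimal polynomial}. From the Hecke relation, the two eigenvalues of $\hat R$ on $V\otimes V$ are $1$ and $-rs^{-1}$. Taking $\lambda_1 = -rs^{-1}$ and $\lambda_2 = 1$ (the opposite choice only differs by an overall scalar, which does not affect the Yang-Baxter equation) the formula becomes
\[
\hat R(z) \;=\; \hat R \;-\; zrs^{-1}\hat R^{-1}.
\]

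Next I would use the Hecke relation to eliminate $\hat R^{-1}$. Dividing $(\hat R - 1)(\hat R + rs^{-1})=0$ through by $\hat R$ and solving for $\hat R^{-1}$ gives the identity $\hat R^{-1} = sr^{-1}\hat R + (1-sr^{-1})\,\mathrm{Id}$. Substituting this into the previous display and collecting like terms produces the compact intermediate form
\[
\hat R(z) \;=\; (1-z)\,\hat R \;+\; z(1-rs^{-1})\,\mathrm{Id}.
\]

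What remains is bookkeeping. I would substitute the explicit expression for $\hat R$ and decompose the identity as $\mathrm{Id}=\sum_i E_{ii}\otimes E_{ii}+\sum_{i<j}E_{ii}\otimes E_{jj}+\sum_{i>j}E_{ii}\otimes E_{jj}$, and then read off the coefficient of each of the four tensor types appearing. The coefficient of $E_{ii}\otimes E_{ii}$ is $(1-z)+z(1-rs^{-1})=1-zrs^{-1}$, matching the first term of the claim. The $r$- and $s^{-1}$-terms retain the prefactor $(1-z)$, which gives the second term. The coefficient of $E_{ii}\otimes E_{jj}$ with $i<j$ comes purely from the identity and equals $z(1-rs^{-1})$, matching the third term. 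Finally, the coefficient of $E_{ii}\otimes E_{jj}$ with $i>j$ combines the $(1-z)(1-rs^{-1})$ contribution from the $\hat R$ term with $z(1-rs^{-1})$ from the identity, telescoping to $(1-rs^{-1})$, which is the last term of the claim.

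The only subtle point is cosmetic rather than mathematical: the index conventions in the given formula for $\hat R$ use $E_{ji}\otimes E_{ij}$ and $E_{jj}\otimes E_{ii}$ summed over $i<j$, while the statement of the proposition writes $E_{ij}\otimes E_{ji}$ summed over $i>j$ and $E_{ii}\otimes E_{jj}$ summed over $i>j$. One must swap the summation labels $i\leftrightarrow j$ consistently before matching coefficients. Once that relabeling is done, the entire proof reduces to the two-line manipulation driven by the Hecke relation, so there is no real obstacle beyond careful bookkeeping.
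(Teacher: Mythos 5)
Your proof is correct: the paper itself gives no argument for this proposition (it simply cites \cite{JZL}), and the intended verification is exactly what you carried out — apply the Yang--Baxterization formula \eqref{Yang-Baxterization} with eigenvalues $1$ and $-rs^{-1}$, use the Hecke relation \eqref{minimal polynomial} to write $\hat R^{-1}=sr^{-1}\hat R+(1-sr^{-1})\,\mathrm{Id}$, and match coefficients after the relabeling $i\leftrightarrow j$. All of your intermediate identities, including the compact form $\hat R(z)=(1-z)\hat R+z(1-rs^{-1})\,\mathrm{Id}$ and the coefficient bookkeeping, check out.
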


\begin{remark}
Consider the R-matrix $R(z)=\frac{1}{1-zrs^{-1}}P\hat{R}(z)$, where $P=\sum_{ij}E_{ij}\otimes E_{ji}$.
\begin{eqnarray*}
R(z)&=&\sum_{i=1}^nE_{ii}\otimes E_{ii}+\frac{(1-z)r}{1-zrs^{-1}}\sum_{i>j}E_{ii}\otimes E_{jj}+\frac{(1-z)s^{-1}}{1-zrs^{-1}}\sum_{i<j}E_{ii}\otimes E_{jj}\\
&+&\frac{1-rs^{-1}}{1-zrs^{-1}}\sum_{i>j}E_{ij}\otimes E_{ji}+\frac{(1-rs^{-1})z}{1-rs^{-1}z}\sum_{i<j}E_{ij}\otimes E_{ji}.
\end{eqnarray*}
It is easy to check that $R(z)$ satisfy the quantum Yang-Baxter equation:
$$R_{12}(z)R_{13}(zw)R_{23}(w)=R_{23}(w)R_{13}(zw)R_{12}(z),$$
and the unitary condition:
\begin{equation}\label{unitary condition}
R_{21}(z)R(z^{-1})=R(z^{-1})R_{21}(z)=1.
\end{equation}
\end{remark}

It is clear that when $r=q, s=q^{-1}$ the $R$-matrix degenerates to the usual spectral dependent R-matrix \cite{DF}.

\section{The R-matrix algebra $U(R)$ and its Gauss decomposition}
In this section, we construct an RTT realization
of $U_{r,s}(\widehat{\mathfrak{gl}}_n)$ using Reshetikhin-Semenov's method. Furthermore, we give the Gauss decomposition of $U_{r,s}(\widehat{\mathfrak{gl}}_n)$ in terms of quasi-determinants \cite{GR}.
\subsection{The R-matrix algebra $U(R)$}
\begin{definition}
$U(R)$ is an associative algebra with generators  $l^{\pm}_{ij}[\mp m]$ , $m\in \mathbb{Z}_{+}\setminus0$
and $l^{+}_{kl}[0]$, $l^{-}_{lk}[0]$, $1\leq l\leq k\leq n$ and central elements $r^c, s^c$. Let $l^{\pm}_{ij}(z)=\sum_{m=0}^{\infty}l^{\pm}_{ij}[\mp m]z^{\pm m}$,
where $l_{kl}^{+}[0]=l^{-}_{lk}[0]=0$, for $1\leq k <l\leq n$. Let $L^{\pm}(z)=\sum_{i,j=1}^{n}E_{ij}\otimes l_{ij}^{\pm}(z)$.
Then the relations are given by the  following matrix equations on $End(V^{\otimes 2})\otimes U(R)$:
\begin{equation}
l^{+}_{ii}[0]l_{ii}^{-}[0]=l_{ii}^{-}[0]l^{+}_{ii}[0]
\end{equation}
\begin{equation}\label{generating relation1}
R(\frac{z}{w})L^{\pm}_{1}(z)L^{\pm}_2(w)=L^{\pm}_2(w)L^{\pm}_{1}(z)R(\frac{z}{w})
\end{equation}
\begin{equation}\label{generating relation2}
R(\frac{z_{+}}{w_{-}})L^{+}_1(z)L^{-}_2(w)=L^{-}_2(w)L^{+}_1(z)R(\frac{z_{-}}{w_{+}})
\end{equation}
where $z_+=zr^{\frac{c}{2}}$ and $z_{-}=zs^{\frac{c}{2}}$. Here Eq. (\ref{generating relation1}) are expanded in the
direction of either $\frac zw$ or $\frac wz$, and Eq. (\ref{generating relation2}) is expanded in the
direction of $\frac zw$.
\end{definition}

\begin{remark}
From the Eq (\ref{generating relation2}) and the unitary condition of R-matrix (\ref{unitary condition}), we have
\begin{equation}\label{Eq4.5}
R(\frac{z_{\pm}}{w_{\mp}})L^{\pm}_1(z)L^{\mp}_2(w)=L^{\mp}_2(w)L^{\pm}_1(z)R(\frac{z_{\mp}}{w_{\pm}})
\end{equation}
So the generating relations (\ref{generating relation1}), (\ref{generating relation2}) are equivalent to
the following:
\begin{equation}\label{equivalent generating relation1}
L^{\pm}_{1}(z)^{-1}L^{\pm}_2(w)^{-1}R(\frac{z}{w})=R(\frac{z}{w})L^{\pm}_2(w)^{-1}L^{\pm}_{1}(z)^{-1},
\end{equation}
\begin{equation}\label{equivalent generating relation2}
L^{\pm}_{1}(z)^{-1}L^{\mp}_2(w)^{-1}R(\frac{z_{\pm}}{w_{\mp}})=R(\frac{z_{\mp}}{w_{\pm}})L^{\mp}_2(w)^{-1}L^{\pm}_{1}(z)^{-1}.
\end{equation}
They are also equivalent to
\begin{equation}\label{equivalent generating relation1'}
L^{\pm}_2(w)^{-1}R(\frac{z}{w})L^{\pm}_{1}(z)=L^{\pm}_{1}(z)R(\frac{z}{w})L^{\pm}_2(w)^{-1},
\end{equation}
\begin{equation}\label{equivalent generating relation2'}
L^{\mp}_2(w)^{-1}R(\frac{z_{\pm}}{w_{\mp}})L^{\pm}_{1}(z)=L^{\pm}_{1}(z)R(\frac{z_{\mp}}{w_{\pm}})L^{\mp}_2(w)^{-1}.
\end{equation}

\end{remark}
\begin{remark} Relations (\ref{generating relation1}) and (\ref{Eq4.5}) can be equivalently written in terms of the generating series:
\begin{align}\notag
&\left(\delta_{pr}+(rs\delta_{p>r}+\delta_{p<r})\frac{w-z}{ws-zr}\right)l^{\pm}_{pq}(z)l^{\pm}_{rs}(w)
+\frac{s-r}{ws-zr}(z\delta_{p<r}
+w\delta_{p>r})l_{rq}^{\pm}(z)l_{ps}^{\pm}(w)=\\ \label{Eq4.9}
&\ \left(\delta_{qs}+(rs{\delta_{q>s}}+\delta_{q<s})\frac{w-z}{ws-zr}\right)l^{\pm}_{rs}(w)l^{\pm}_{pq}(z)
+\frac{s-r}{ws-zr}(w\delta_{q<s}
+z\delta_{q>s})l_{rq}^{\pm}(w)l_{ps}^{\pm}(z),
\end{align}
\begin{align}\notag
\frac{s-r}{w_{\mp}s-z_{\pm}r}&\big((z_{\pm}\delta_{p<r}
+w_{\mp}\delta_{p>r})-(w_{\pm}\delta_{q<s}
+z_{\mp}\delta_{q>s})\big)l_{rq}^{\pm}(z)l_{ps}^{\mp}(w)\\ \label{Eq4.10}
=-&\left(\delta_{pr}+(rs{\delta_{p>r}}+\delta_{p<r})\frac{w_{\mp}-z_{\pm}}{w_{\mp}s-z_{\pm}r}\right)l^{\pm}_{pq}(z)l^{\mp}_{rs}(w)\\ \notag
+&\left(\delta_{qs}+(rs{\delta_{q>s}}+\delta_{q<s})\frac{w_{\pm}-z_{\mp}}{w_{\pm}s-z_{\mp}r}\right)l^{\mp}_{rs}(w)l^{\pm}_{pq}(z).
\end{align}

Similarly Eqs. (\ref{equivalent generating relation1}--\ref{equivalent generating relation2}) are equivalent to the following
\begin{align}\notag
&\left(\delta_{qs}+(rs{\delta_{q>s}}+\delta_{q<s})\frac{w-z}{ws-zr}\right)l^{\pm}_{pq}(z)'l^{\pm}_{rs}(w)'
+\frac{s-r}{ws-zr}(z\delta_{q>s}
+w\delta_{q<s})l_{ps}^{\pm}(z)'l_{rq}^{\pm}(w)'=\\ \label{Eq4.11}
&\quad \left(\delta_{pr}+(rs{\delta_{p>r}}+\delta_{p<r})\frac{w-z}{ws-zr}\right)l^{\pm}_{rs}(w)'l^{\pm}_{pq}(z)'
+\frac{s-r}{ws-zr}(w\delta_{p>r}
+z\delta_{p<r})l_{ps}^{\pm}(w)'l_{rq}^{\pm}(z)',
\end{align}
\begin{align}\notag
&\left(\delta_{qs}+((rs){\delta_{q>s}}+\delta_{q<s})\frac{w_{\mp}-z_{\pm}}{w_{\mp}s-z_{\pm}r}\right)l^{\pm}_{pq}(z)'l^{\mp}_{rs}(w)'
+\frac{s-r}{w_{\mp}s-z_{\pm}r}(z_{\pm}\delta_{q>s}
+w_{\mp}\delta_{q<s})l_{ps}^{\pm}(z)'l_{rq}^{\mp}(w)'=\\ \label{Eq4.12}
&\left(\delta_{pr}+((rs){\delta_{p>r}}+\delta_{p<r})\frac{w_{\pm}-z_{\mp}}{w_{\pm}s-z_{\mp}r}\right)l^{\mp}_{rs}(w)'l^{\pm}_{pq}(z)'
+\frac{s-r}{w_{\pm}s-z_{\mp}r}(w_{\pm}\delta_{p>r}
+z_{\mp}\delta_{p<r})l_{ps}^{\mp}(w)'l_{rq}^{\pm}(z)'.
\end{align}

Finally Eqs. (\ref{equivalent generating relation1'}--\ref{equivalent generating relation2'}) are seen to be equivalent to the following
\begin{align}\label{Eq4.13}\notag
&\delta_{ps}l_{rs}^{\pm}(w)'l_{pq}^{\pm}(z)+((rs){\delta_{p>s}}+\delta_{p<s})\frac{w-z}{ws-zr}l^{\pm}_{rs}(w)'l^{\pm}_{pq}(z)\\ \notag
&+\frac{s-r}{ws-zr}
\delta_{ps}(w\sum_{j<p}l^{\pm}_{rj}(w)'l^{\pm}_{jq}(z)+z\sum_{j>p}l^{\pm}_{rj}(w)'l^{\pm}_{jq}(z))=\\
&\delta_{qr}l_{pq}^{\pm}(z)l_{rs}^{\pm}(w)'+((rs){\delta_{q>r}}+\delta_{q<r})\frac{w-z}{ws-zr}l^{\pm}_{pq}(z)l^{\pm}_{rs}(w)'\\ \notag
&+\frac{s-r}{ws-zr}
\delta_{qr}(w\sum_{i>q}l^{\pm}_{pi}(z)l^{\pm}_{is}(w)'+z\sum_{i<q}l^{\pm}_{pi}(z)l^{\pm}_{is}(w)'), \\ \notag
&\delta_{ps}l_{rs}^{\mp}(w)'l_{pq}^{\pm}(z)+((rs){\delta_{p>s}}+\delta_{p<s})\frac{w_{\mp}-z_{\pm}}{w_{\mp}s-z_{\pm}r}l^{\mp}_{rs}(w)'l^{\pm}_{pq}(z)
\\ \notag
&+\frac{s-r}{w_{\mp}s-z_{\pm}r}
\delta_{ps}(w_{\mp}\sum_{j<p}l^{\mp}_{rj}(w)'l^{\pm}_{jq}(z)+z_{\pm}\sum_{j>p}l^{\mp}_{rj}(w)'l^{\pm}_{jq}(z))=\\ \label{Eq4.14}
&\delta_{qr}l_{pq}^{\pm}(z)l_{rs}^{\mp}(w)'+((rs){\delta_{q>r}}+\delta_{q<r})\frac{w_{\pm}-z_{\mp}}{w_{\pm}s-z_{\mp}r}l^{\pm}_{pq}(z)l^{\mp}_{rs}(w)'
\\ \notag
&+\frac{s-r}{w_{\pm}s-z_{\mp}r}
\delta_{qr}(w_{\pm}\sum_{i>q}l^{\pm}_{pi}(z)l^{\mp}_{is}(w)'+z_{\mp}\sum_{i<q}l^{\pm}_{pi}(z)l^{\mp}_{is}(w)').
\end{align}
\end{remark}
\subsection{Quasi-determinant and Gauss decomposition}
\begin{definition}
Let $X$ be a square matrix over a ring with identity such that its inverse matrix $X^{-1}$ exists. Suppose that the $(j,i)$-th entry of
$X^{-1}$ is an invertible element of the ring. The $(i,j)$-th quasi-determinant $\left |X\right |_{ij}$ of $X$ is defined by
\begin{equation*}
\left |X\right |_{ij}=\left |
                        \begin{array}{ccccc}
                          x_{11} & \cdots & x_{1j} & \cdots & x_{1n}\\
                           & \cdots &  & \cdots &  \\
                          x_{i1} & \cdots & \boxed{x_{ij}} & \cdots & x_{in} \\
                           & \cdots &  & \cdots &  \\
                          x_{n1} & \cdots & x_{nj} & \cdots & x_{nn} \\
                        \end{array}
                      \right |=\left ((X^{-1})_{ji}\right )^{-1}.
\end{equation*}
\end{definition}

By \cite[Th. 4.9.6]{GGRW}, we have the following decomposition of $L^{\pm}(z)$:
\begin{proposition}
$L^{\pm}(z)$ have the following unique decomposition:
\begin{equation}
L^{\pm}(z)=F^{\pm}(z)K^{\pm}(z)E^{\pm}(z),
\end{equation}
where
\begin{equation}
F^{\pm}(z)=\begin{pmatrix}
       1 &  &  & 0 \\
       f_{21}^{\pm}(z) & \ddots &  &  \\
       f_{31}^{\pm}(z) & \ddots & \ddots & \ddots \\
       \vdots & \ddots &  &  \\
       f_{n1}^{\pm}(z) & \ldots & f_{n,n-1}^{\pm}(z) & 1 \\
     \end{pmatrix},
\end{equation}
\begin{equation}
K^{\pm}(z)=\begin{pmatrix}
       k_1^{\pm}(z) &  &  & 0 \\
        & \ddots &  &  \\
        &  & \ddots &  \\
       0 &  &  & k_{n}^{\pm}(z) \\
     \end{pmatrix},
\end{equation}
\begin{equation}
E^{\pm}(z)=\begin{pmatrix}
       1 & e_{12}^{\pm}(z) & e_{13}^{\pm}(z) & \ldots & e_{1n}^{\pm}(z) \\
        & \ddots & \ddots & \ddots & \vdots \\
        &  &  &  & e_{n-1,n}^{\pm}(z) \\
       0 &  &  &  & 1 \\
     \end{pmatrix},
\end{equation}
and  for $1\leqslant i\leqslant n$ and $1\leqslant i< j\leqslant n$
$$k_i^{\pm}(z)=\left|
                 \begin{array}{cccc}
                   l^{\pm}_{11}(z) & \cdots & l^{\pm}_{1,i-1}(z) & l^{\pm}_{1i}(z) \\
                   \vdots & \ddots &  & \vdots \\
                   l^{\pm}_{i1}(z) & \ldots & l^{\pm}_{i,i-1}(z) & \boxed{l^{\pm}_{ii}(z)} \\
                 \end{array}
               \right| =
\sum_{m\in \mathbb{Z}_+}k_i^{\pm}(\mp m)z^{\pm m},$$
$$e_{ij}^{\pm}(z)=k_i^{\pm}(z)^{-1}\left|
                 \begin{array}{cccc}
                   l^{\pm}_{11}(z) & \cdots & l^{\pm}_{1,i-1}(z) & l^{\pm}_{1j}(z) \\
                   \vdots & \ddots & \vdots & \vdots \\
                   l^{\pm}_{i-1,1}(z) & \ldots & l^{\pm}_{i-1,i-1}(z) & l^{\pm}_{i-1,j}(z) \\
                   l^{\pm}_{i1}(z) & \ldots & l^{\pm}_{i,i-1}(z) & \boxed{ l^{\pm}_{ij}(z)} \\
                 \end{array}
               \right|=\sum_{m\in \mathbb{Z}_+}e_{ij}^{\pm}(\mp m)z^{\pm m},$$
$$f_{ji}^{\pm}(z)=\left|
                 \begin{array}{cccc}
                   l^{\pm}_{11}(z) & \cdots & l^{\pm}_{1,i-1}(z) & l^{\pm}_{1i}(z) \\
                   \vdots & \ddots & \vdots & \vdots \\
                   l^{\pm}_{i-1,1}(z) & \ldots & l^{\pm}_{i-1,i-1}(z) & l^{\pm}_{i-1,i}(z) \\
                   l^{\pm}_{j1}(z) & \ldots & l^{\pm}_{j,i-1}(z) & \boxed{ l^{\pm}_{ji}(z)} \\
                 \end{array}
               \right|k_i^{\pm}(z)^{-1}=\sum_{m\in \mathbb{Z}_+}f_{ji}^{\pm}(\mp m)z^{\pm m}.$$
\end{proposition}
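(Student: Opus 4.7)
The plan is to invoke the general Gauss decomposition theorem for matrices over a noncommutative ring, namely \cite[Theorem 4.9.6]{GGRW}, which is already cited in the text just above the statement. Two ingredients need to be checked in order to apply it: the invertibility of $L^{\pm}(z)$ as a matrix over an appropriate ring, and the invertibility of each leading principal quasi-determinant $k_i^{\pm}(z)$, since the formulas for $e_{ij}^{\pm}$ use the inverse of $k_i^{\pm}(z)$.

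First I would fix the ambient ring. The entries of $L^{+}(z)$ lie in $U(R)[[z]]$ while those of $L^{-}(z)$ lie in $U(R)[[z^{-1}]]$. The zero-mode constraints $l^{+}_{kl}[0]=0$ for $k<l$ and $l^{-}_{lk}[0]=0$ for $k<l$ imply that the reduction $L^{+}(z)|_{z=0}$ is lower-triangular and $L^{-}(z)|_{z^{-1}=0}$ is upper-triangular, with diagonals given by the commuting zero modes $l^{\pm}_{ii}[0]$. After passing to the localization in which each $l^{\pm}_{ii}[0]$ is inverted, these leading matrices are invertible triangular, hence $L^{\pm}(z)$ itself is invertible by a standard Neumann-series argument. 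Exactly the same reasoning applied to the upper-left $i\times i$ block shows that the principal quasi-determinants $k_i^{\pm}(z)$ are invertible, with leading term in $z^{\mp 0}$ given by $l_{ii}^{\pm}[0]$ up to corrections of higher order in $z^{\pm 1}$.

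With these two hypotheses in place, \cite[Theorem 4.9.6]{GGRW} yields the existence and uniqueness of a factorization $L^{\pm}(z)=F^{\pm}(z)K^{\pm}(z)E^{\pm}(z)$ with $F^{\pm}$ unit lower-triangular, $K^{\pm}$ diagonal and $E^{\pm}$ unit upper-triangular, and it identifies the entries via the explicit quasi-determinantal formulas stated in the proposition. The only remaining point is to verify the asserted expansion ranges $\sum_{m\in\mathbb Z_{+}} z^{\pm m}$ for $k_i^{\pm}$, $e_{ij}^{\pm}$ and $f_{ji}^{\pm}$; this follows by expanding $k_i^{\pm}(z)^{-1}$ geometrically around its invertible leading term and observing that the remaining contributions to each quasi-determinant involve only strictly non-negative powers of $z$ in the $+$ case (resp.\ non-negative powers of $z^{-1}$ in the $-$ case), because every $l^{\pm}_{ij}(z)$ has that property by construction.

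The proof is essentially bookkeeping once the quasi-determinantal Gauss decomposition is invoked, so I do not anticipate a substantive obstacle. The one mild subtlety worth making explicit is the choice of ambient ring in which the diagonal zero modes are inverted; this is standard in the Ding--Frenkel approach and is tacitly assumed here.
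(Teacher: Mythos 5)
Your proposal follows exactly the route the paper takes: the paper offers no argument beyond citing \cite[Th.~4.9.6]{GGRW}, and you invoke the same theorem, merely making explicit the routine verifications (invertibility of $L^{\pm}(z)$ and of the principal quasi-minors after inverting the diagonal zero modes, and the expansion ranges) that the paper leaves tacit. This is correct and, if anything, more careful than the original.
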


\section{Drinfeld realization of $U_{r,s}(\widehat{\mathfrak{gl}}_n)$}

In this section, we give the Drinfeld realization of $U_{r,s}(\widehat{\mathfrak{gl}}_n)$
by studying the commutation relations between Gaussian generators. This is done through
the RTT presentation of $U_{r,s}(\widehat{\mathfrak{gl}}_n)$.

\begin{theorem}\label{Theorem5.2}
Let $X_{i}^{+}(z)=e_{i,i+1}^{+}(z_{+})-e_{i,i+1}^{-}(z_{-})$,
and $X_{i}^{-}(z)=f_{i+1,i}^{+}(z_{-})-f_{i+1,i}^{-}(z_{+})$.
Then the following relations are satisfied in $U(R)$:
\begin{align*}
k_{j}^{+}[0]k_j^{-}[0]&=k_j^{-}[0]k_{j}^{+}[0], \\
k_{i}^{\pm}(z)k_{j}^{\pm}(w)&=k_{j}^{\pm}(w)k_{i}^{\pm}(z),\\
k^{+}_i(z)k^{-}_i(w)&=k^{-}_i(w)k^{+}_i(z),
\end{align*}
\begin{align*}
\frac{z_{\mp}-w_{\pm}}{z_{\mp}r-w_{\pm}s}k_{i}^{\mp}(z)k_{j}^{\pm}(w)&=
\frac{z_{\pm}-w_{\mp}}{z_{\pm}r-w_{\mp}s}k_{j}^{\pm}(w)k_{i}^{\mp}(z)  \quad\mbox{if $j>i$},\\
 k_{i}^{\pm}(w)^{-1}X_{j}^{\mp}(z)k^{\pm}_i(w)&=X^{\mp}_{i}(z),   \quad\mbox{$i-j\leq -1$}, or\quad\mbox{$i-j\geq 2$},\\
k_{i}^{\pm}(w)^{-1}X_{j}^{\pm}(z)k^{\pm}_i(w)&=X^{\pm}_{i}(z), \quad\mbox{$i-j\leq -1$}, or\quad\mbox{$i-j\geq 2$},
\end{align*}
\begin{align*}
k_{i}^{\pm}(z)^{-1}X_{i}^{+}(w)k_{i}^{\pm}(z)&=\frac{z_{\mp}r-ws}{z_{\mp}-w}X_{i}^{+}(w),\\
k_{i+1}^{\pm}(z)^{-1}X_{i}^{+}(w)k_{i+1}^{\pm}(z)&=\frac{z_{\mp}s-wr}{z_{\mp}-w}X_{i}^{+}(w),\\
k_{i}^{\pm}(z)X_{i}^{-}(w)k_{i}^{\pm}(z)^{-1}&=\frac{z_{\pm}r-ws}{z_{\pm}-w}X_{i}^{-}(w),\\
k_{i+1}^{\pm}(z)X_{i}^{-}(w)k_{i+1}^{\pm}(z)^{-1}&=\frac{z_{\pm}s-wr}{z_{\pm}-w}X_{i}^{-}(w),
\end{align*}
\begin{align*}
(zs-wr)X_{i}^{+}(z)X_{i}^{+}(w)&=(zr-ws)X_{i}^{+}(w)X_{i}^{+}(z),\\
(zr-ws)X_{i}^{-}(z)X_{i}^{-}(w)&=(zs-wr)X_{i}^{-}(w)X_{i}^{-}(z),\\
(z-w)X_{i}^{-}(z)X^{-}_{i+1}(w)&=(zr-ws)X^{-}_{i+1}(w)X_{i}^{-}(z),\\
(zr-ws)X_{i}^{+}(z)X^{+}_{i+1}(w)&=(z-w)X^{+}_{i+1}(w)X_{i}^{+}(z),
\end{align*}

\begin{align*}
[X^{\pm}_i(z),X^{\pm}_j(w)]&=0,~~ if  (\alpha_i, \alpha_j)=0, \\
[X^{+}_i(z),X^{-}_j(w)]&=(r-s)\delta_{ij}\left\{\delta\left(\frac{z_-}{w_+}\right)k_{i+1}^{-}(w_{+})k_{i}^{-}(w_{+})^{-1}\right.\\
&\hskip 0.8in -\left.\delta\left(\frac{z_+}{w_-}\right)k_{i+1}^{+}(z_{+})k_{i}^{+}(z_{+})^{-1}\right\}
\end{align*}
and the following Serre relations hold in $U(R)$:
\begin{align*}
&\{X_{i}^{-}(z_1)X_{i}^{-}(z_2)X_{i+1}^{-}(w)-(r+s)X_{i}^{-}(z_1)X_{i+1}^{-}(w)X_{i}^{-}(z_2)\\
&\qquad\quad+rsX_{i+1}^{-}(w)X_{i}^{-}(z_1)X_{i}^{-}(z_2)\}+\{z_1\leftrightarrow z_2\}=0\\
&\{rsX_{i+1}^{-}(z_1)X_{i+1}^{-}(z_2)X_{i}^{-}(w)-(r+s)X_{i+1}^{-}(z_1)X_{i}^{-}(w)X_{i+1}^{-}(z_2)\\
&\qquad\quad+X_{i}^{-}(w)X_{i+1}^{-}(z_1)X_{i+1}^{-}(z_2)
\}+\{z_1\leftrightarrow z_2\}=0\\
&\{rsX_{i}^{+}(z_1)X_{i}^{+}(z_2)X_{i+1}^{+}(w)-(r+s)X_{i}^{+}(z_1)X_{i+1}^{+}(w)X_{i}^{+}(z_2)\\
&\qquad\quad+X_{i+1}^{+}(w)X_{i}^{+}(z_1)X_{i}^{+}(z_2)
\}+\{z_1\leftrightarrow z_2\}=0.\\
&\{X_{i+1}^{+}(z_1)X_{i+1}^{+}(z_2)X_{i}^{+}(w)-(r+s)X_{i+1}^{+}(z_1)X_{i}^{+}(w)X_{i+1}^{+}(z_2)\\
&\qquad\quad+rsX_{i}^{+}(w)X_{i+1}^{+}(z_1)X_{i+1}^{+}(z_2)
\}+\{z_1\leftrightarrow z_2\}=0.
\end{align*}
where the formal delta function
$\delta(z)=\sum_{n\in \mathbb{Z}}z^n.$
\end{theorem}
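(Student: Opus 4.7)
The plan is to follow the Ding--Frenkel strategy \cite{DF} adapted to the two-parameter setting. Starting from the component forms (\ref{Eq4.9})--(\ref{Eq4.14}) of the RTT relations and the quasi-determinant expressions for the Gaussian blocks $k_i^{\pm}(z)$, $e_{ij}^{\pm}(z)$, $f_{ji}^{\pm}(z)$, I would extract the desired commutation relations one family at a time, moving from quadratic to cubic.

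For the $k$--$k$ relations and for the trivial action of $k_i^{\pm}(z)$ on $X_j^{\pm}(w)$ when $|i-j|\geq 2$, the key observation is that $k_i^{\pm}(z)$ is a quasi-determinant built from $l^{\pm}_{pq}(z)$ with $p,q\leq i$, so these relations can be reduced to appropriate corners of the RTT system and argued by induction on the size of the quasi-determinants. The non-trivial $k$--$X$ relations, where the scalar factors $(z_{\mp}r-ws)/(z_{\mp}-w)$ appear, are obtained by choosing the indices $(p,q,r,s)$ in (\ref{Eq4.9}) and (\ref{Eq4.10}) so that only diagonal contributions to the relevant quasi-determinants survive; the rational prefactors of the R-matrix then supply the desired two-parameter coefficients. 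The same idea, applied inside a $\mathfrak{gl}_2$-type corner indexed by $\{i,i+1\}$, produces the quadratic relations among $X_i^{\pm}(z)$ and $X_i^{\pm}(w)$, and between $X_i^{\pm}(z)$ and $X_{i+1}^{\pm}(w)$.

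For the $X_i^{+}$--$X_j^{-}$ commutation and the appearance of the formal delta function, I would use the mixed relation (\ref{Eq4.14}) and expand the rational kernel $1/(z_{\mp}s-w_{\pm}r)$ in the two allowed directions; the difference of these two expansions yields $\delta(z_{\mp}/w_{\pm})$. The very definition of $X_i^{\pm}(z)$ as the difference $e_{i,i+1}^{+}(z_+)-e_{i,i+1}^{-}(z_-)$ (respectively for $f$) ensures the cancellations needed to isolate the pure diagonal combinations $k_{i+1}^{\pm}(w_{\pm})k_i^{\pm}(w_{\pm})^{-1}$, and the shifts by $r^{c/2}$ and $s^{c/2}$ enter naturally through the arguments $z_{\pm},w_{\mp}$ of the R-matrix in (\ref{generating relation2}).

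The main obstacle will be the Serre relations, which are cubic and cannot be extracted from a single RTT equation. I plan to reduce them to a calculation inside the rank-three subalgebra generated by the quasi-determinants with indices in $\{i,i+1,i+2\}$, where the quadratic $X_i^{\pm}$--$X_{i+1}^{\pm}$ relations already established can be iterated, and then apply the symmetrization $\{z_1\leftrightarrow z_2\}$ to annihilate the residual terms proportional to $(r+s)X_i^{\pm}X_{i+1}^{\pm}X_i^{\pm}$. Concretely, one combines two instances of the cubic identity obtained by moving $X_{i+1}^{\pm}(w)$ past $X_i^{\pm}(z_1)X_i^{\pm}(z_2)$ with a suitable rational kernel; after clearing denominators and symmetrizing in $z_1,z_2$, the Serre expression must vanish. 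Throughout the argument one must keep careful track of expansion directions, as noted after (\ref{generating relation2}), to avoid generating spurious delta-function terms in identities that should be purely rational.
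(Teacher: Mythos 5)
Your plan matches the paper's proof essentially step for step: both extract the quadratic relations from the component RTT equations (\ref{Eq4.9})--(\ref{Eq4.14}) via the quasi-determinant Gauss decomposition, reduce the general case to $2\times 2$ and $3\times 3$ corners ($J^{\pm}$, $\widetilde{J}^{\pm}$) and induct on $n$, obtain the delta function in $[X_i^+,X_j^-]$ from the two expansion directions of the mixed relation, and derive the Serre relations inside the $\{i,i+1,i+2\}$ corner by rewriting the cubic expression as a rational multiple of $X_i^{\pm}(z_1)X_i^{\pm}(z_2)X_{i+1}^{\pm}(w)$ and checking that the symmetrization in $z_1\leftrightarrow z_2$ kills the resulting coefficient. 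This is precisely the route taken in Sections 4.1--4.3 of the paper, so the proposal is correct and not a genuinely different argument.
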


The idea of the proof is to first check the relations for $n=2$ and $n=3$, and then use induction to show the general situation.

\subsection{Case of $n=2$}

We first check the theorem for the case of $n=2$. Let us consider the generators
$k^{\pm}_i(z)$, $e_{12}(z)$, $f_{21}(z)$, $i=1, 2$, we have that
\begin{lemma}\label{lemma5.3}
\begin{equation}\label{Eq5.5}
k_{1}^{\pm}(z)k_{2}^{\pm}(w)=k_{2}^{\pm}(w)k_{1}^{\pm}(z),
\end{equation}
\begin{equation}\label{Eq5.6}
k^{+}_i(z)k^{-}_i(w)=k^{-}_i(w)k^{+}_i(z),~~i=1,2,
\end{equation}
\begin{equation}\label{Eq5.7}
\frac{w_{\mp}-z_{\pm}}{w_{\mp}s-z_{\pm}r}k_{1}^{\pm}(z)k_{2}^{\mp}(w)
=\frac{w_{\pm}-z_{\mp}}{w_{\pm}s-z_{\mp}r}k_{2}^{\mp}(w)k_{1}^{\pm}(z).
\end{equation}
\end{lemma}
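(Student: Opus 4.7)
The plan is to recognise each of (\ref{Eq5.5}), (\ref{Eq5.6}), (\ref{Eq5.7}) as a direct specialisation of one of the component RTT forms (\ref{Eq4.9})--(\ref{Eq4.14}). The key observation is that for $n=2$ the Gauss decomposition collapses dramatically: one has $k_{1}^{\pm}(z)=l_{11}^{\pm}(z)$, while
\begin{equation*}
k_{2}^{\pm}(z)=l_{22}^{\pm}(z)-l_{21}^{\pm}(z)\,l_{11}^{\pm}(z)^{-1}\,l_{12}^{\pm}(z)=\bigl(l_{22}^{\pm}(z)'\bigr)^{-1},
\end{equation*}
where $l_{ij}^{\pm}(z)':=(L^{\pm}(z)^{-1})_{ij}$. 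Thus every Gaussian generator appearing in the lemma is a single entry of $L^{\pm}(z)$ or of its inverse, and each claimed identity should reduce to a one-line commutation between two such entries, extractable by a single index specialisation.

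For (\ref{Eq5.5}) I would specialise (\ref{Eq4.13}) to $(p,q,r,s)=(1,1,2,2)$. Since $\delta_{ps}=\delta_{qr}=0$ and only $\delta_{p<s}=\delta_{q<r}=1$ among the strict-inequality deltas, every summation term vanishes and the identity collapses to
\begin{equation*}
\tfrac{w-z}{ws-zr}\,l_{22}^{\pm}(w)'\,l_{11}^{\pm}(z)=\tfrac{w-z}{ws-zr}\,l_{11}^{\pm}(z)\,l_{22}^{\pm}(w)'.
\end{equation*}
Cancelling the common rational factor and inverting $l_{22}^{\pm}(w)'=k_{2}^{\pm}(w)^{-1}$ yields (\ref{Eq5.5}). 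The argument for (\ref{Eq5.7}) is parallel: applying the same specialisation $(p,q,r,s)=(1,1,2,2)$ to (\ref{Eq4.14}) the delta-weighted sums again drop out, leaving
\begin{equation*}
\tfrac{w_{\mp}-z_{\pm}}{w_{\mp}s-z_{\pm}r}\,k_{2}^{\mp}(w)^{-1}k_{1}^{\pm}(z)=\tfrac{w_{\pm}-z_{\mp}}{w_{\pm}s-z_{\mp}r}\,k_{1}^{\pm}(z)\,k_{2}^{\mp}(w)^{-1},
\end{equation*}
which becomes (\ref{Eq5.7}) after multiplying both sides by $k_{2}^{\mp}(w)$ and using the fact that the scalar prefactors commute with all generators.

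For (\ref{Eq5.6}) I would split on $i$. When $i=1$ the relation reads $l_{11}^{+}(z)\,l_{11}^{-}(w)=l_{11}^{-}(w)\,l_{11}^{+}(z)$, and this is exactly (\ref{Eq4.10}) at $p=q=r=s=1$: every off-diagonal term in that relation is weighted by a strict-inequality delta and hence vanishes, while the diagonal $\delta_{pr}=\delta_{qs}=1$ contribute with their secondary rational factors also killed. When $i=2$ the claim is equivalent to $l_{22}^{+}(z)'\,l_{22}^{-}(w)'=l_{22}^{-}(w)'\,l_{22}^{+}(z)'$; this is the specialisation of (\ref{Eq4.12}) to $p=q=r=s=2$, for which the active deltas are $\delta_{qs}=\delta_{pr}=1$ while all strict-inequality deltas vanish, leaving precisely the desired commutation. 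Inverting then delivers (\ref{Eq5.6}) for $i=2$.

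The lemma presents no serious conceptual obstacle; the only technical point is the bookkeeping needed to verify that the Kronecker-delta prefactors in the chosen specialisations of (\ref{Eq4.12}) and (\ref{Eq4.13})--(\ref{Eq4.14}) really do annihilate every off-diagonal summation term. Once this is checked, the lemma reduces to four one-line index specialisations. This is a natural warm-up for the $n\geq 3$ step of the main theorem, where the convenient identification $k_{i}^{\pm}(z)=(l_{ii}^{\pm}(z)')^{-1}$ no longer holds for intermediate $i$ and one is forced to work with genuine quasi-determinants.
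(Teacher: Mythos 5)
Your proof is correct and follows essentially the same route as the paper: identify $k_1^{\pm}(z)=l_{11}^{\pm}(z)$ and $k_2^{\pm}(z)=\bigl(l_{22}^{\pm}(z)'\bigr)^{-1}$ for $n=2$, then read off each identity from the appropriate index specialisation of the component relations (\ref{Eq4.9})--(\ref{Eq4.14}); indeed the paper's displayed derivation of (\ref{Eq5.7}) is exactly your specialisation of (\ref{Eq4.14}) at $p=q=1$, $r=s=2$. The only difference is that you write out all three identities where the paper proves (\ref{Eq5.7}) and declares the rest similar.
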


\begin{proof}
Here we only prove Eq. (\ref{Eq5.7}) as the other relations are shown similarly.
By Eq. (\ref{Eq4.14}), we get the following relations:
\begin{equation*}
\frac{w_{\pm}-z_{\mp}}{w_{\pm}s-z_{\mp}r}l_{11}^{\pm}(z)l_{22}'^{\mp}(w)=\frac{w_{\mp}-z_{\pm}}{w_{\mp}s-z_{\pm}r}l_{22}'^{\mp}(w)l_{11}^{\pm}(z).
\end{equation*}
Note that $l_{22}'^{\pm}(w)=k_2^{\pm}(w)^{-1}$, the above is equivalent to
\begin{equation*}
\frac{w_{\mp}-z_{\pm}}{w_{\mp}s-z_{\pm}r}k_{1}^{\pm}(z)k_{2}^{\mp}(w)=\frac{w_{\pm}-z_{\mp}}{w_{\pm}s-z_{\mp}r}k_{2}^{\mp}(w)k_{1}^{\pm}(z).
\end{equation*}
\end{proof}

The following lemma gives the relations between $k_{1}^{\pm}(z)$ and $e_{12}^{\pm}(z)$ or $f_{21}^{\pm}(z)$.
\begin{lemma}\label{lemma5.4}
\begin{equation}\label{Eq5.8}
k_{1}^{\pm}(z)e_{12}^{\pm}(w)=\frac{w-z}{ws-zr}e_{12}^{\pm}(w)k_{1}^{\pm}(z)+\frac{w(s-r)}{ws-zr}k_{1}^{\pm}(z)e_{12}^{\pm}(z)
\end{equation}

\begin{equation}\label{Eq5.9}
k_{1}^{\pm}(z)e_{12}^{\mp}(w)=\frac{w_{\pm}-z_{\mp}}{w_{\pm}s-z_{\mp}r}e_{12}^{\mp}(w)k_{1}^{\pm}(z)+\frac{w_{\pm}(s-r)}{w_{\pm}s-z_{\mp}r}k_{1}^{\pm}(z)e_{12}^{\pm}(z),
\end{equation}

\begin{equation}\label{Eq5.10}
f_{21}^{\pm}(w)k_{1}^{\pm}(z)=\frac{w-z}{ws-zr}k_{1}^{\pm}(z)f_{21}^{\pm}(w)+\frac{z(s-r)}{ws-zr}f_{21}^{\pm}(z)k_{1}^{\pm}(z),
\end{equation}

\begin{equation}\label{Eq5.11}
f_{21}^{\mp}(w)k_{1}^{\pm}(z)=\frac{w_{\mp}-z_{\pm}}{w_{\mp}s-z_{\pm}r}k_{1}^{\pm}(z)f_{21}^{\mp}(w)+\frac{z_{\pm}(s-r)}{w_{\mp}s-z_{\pm}r}f_{21}^{\pm}(z)k_{1}^{\pm}(z).
\end{equation}

\end{lemma}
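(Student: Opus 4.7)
My plan is to derive each of \eqref{Eq5.8}--\eqref{Eq5.11} directly from the appropriate matrix entry of the RTT commutators \eqref{Eq4.9} (same-sign case) and \eqref{Eq4.10} (mixed-sign case), and then to unwind the result using the $n=2$ Gauss decomposition, which in this case reduces to the identifications
\begin{equation*}
k_1^{\pm}(z)=l_{11}^{\pm}(z),\qquad l_{12}^{\pm}(z)=k_1^{\pm}(z)\,e_{12}^{\pm}(z),\qquad l_{21}^{\pm}(z)=f_{21}^{\pm}(z)\,k_1^{\pm}(z).
\end{equation*}
Every identity in the lemma is then equivalent to a commutator statement between the raw $l_{ij}^{\pm}$. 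The two commutativity facts used in the cancellation step are $k_1^{\pm}(z)k_1^{\pm}(w)=k_1^{\pm}(w)k_1^{\pm}(z)$ (the $(p,q,r,s)=(1,1,1,1)$ case of \eqref{Eq4.9}) and $k_1^{+}(z)k_1^{-}(w)=k_1^{-}(w)k_1^{+}(z)$ (\eqref{Eq5.6}, established in Lemma~\ref{lemma5.3}).

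For \eqref{Eq5.8}, the specialization $(p,q,r,s)=(1,1,1,2)$ in \eqref{Eq4.9} collapses (via the Kronecker deltas) to the three-term relation
\begin{equation*}
l_{11}^{\pm}(z)\,l_{12}^{\pm}(w) = \frac{w-z}{ws-zr}\,l_{12}^{\pm}(w)\,l_{11}^{\pm}(z) + \frac{(s-r)w}{ws-zr}\,l_{11}^{\pm}(w)\,l_{12}^{\pm}(z).
\end{equation*}
Replacing $l_{12}^{\pm}(u)$ by $k_1^{\pm}(u)\,e_{12}^{\pm}(u)$ on each side and cancelling the common left factor $k_1^{\pm}(w)$ yields \eqref{Eq5.8}. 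The identity \eqref{Eq5.10} is its mirror: use $(p,q,r,s)=(1,1,2,1)$ in \eqref{Eq4.9} to obtain the analogous three-term relation among $l_{11}^{\pm}(z)l_{21}^{\pm}(w)$, $l_{21}^{\pm}(z)l_{11}^{\pm}(w)$ and $l_{21}^{\pm}(w)l_{11}^{\pm}(z)$, then substitute $l_{21}^{\pm}(u)=f_{21}^{\pm}(u)\,k_1^{\pm}(u)$ and cancel the common right factor $k_1^{\pm}(w)$.

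For the mixed-sign identities \eqref{Eq5.9} and \eqref{Eq5.11}, exactly the same procedure applies, now starting from \eqref{Eq4.10} at the two analogous index choices $(1,1,1,2)$ and $(1,1,2,1)$ respectively, and using \eqref{Eq5.6} for the final cancellation. The spectral shifts $z_\pm=zr^{c/2}$ and $w_\mp=ws^{c/2}$ coming from the two $R$-matrix arguments $z_\pm/w_\mp$ and $z_\mp/w_\pm$ in \eqref{generating relation2} propagate through the derivation to give the denominators $w_\pm s-z_\mp r$ of the final formulas. The key new feature is that the three-term $l$-relation underlying \eqref{Eq5.9} reads
\begin{equation*}
l_{11}^{\pm}(z)l_{12}^{\mp}(w) = \frac{w_\pm - z_\mp}{w_\pm s-z_\mp r}\,l_{12}^{\mp}(w)\,l_{11}^{\pm}(z) + \frac{(s-r)w_\pm}{w_\pm s-z_\mp r}\,l_{11}^{\mp}(w)\,l_{12}^{\pm}(z),
\end{equation*}
whose third term carries swapped superscripts $l_{11}^{\mp}(w)\,l_{12}^{\pm}(z)$. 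Sliding $l_{11}^{\mp}(w)^{-1}$ across $l_{11}^{\pm}(z)$ via \eqref{Eq5.6} causes the $l_{11}^{\mp}$ factor to cancel, leaving the surviving residue $k_1^{\pm}(z)\,e_{12}^{\pm}(z)$ that appears on the right of \eqref{Eq5.9}, and analogously for \eqref{Eq5.11}.

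The principal obstacle is the asymmetric bookkeeping of the shifts in the mixed RTT: the two arguments $z_\pm/w_\mp$ and $z_\mp/w_\pm$ of the $R$-matrix on the two sides of \eqref{generating relation2} produce the two distinct denominators $w_\mp s-z_\pm r$ and $w_\pm s-z_\mp r$, and one must identify which of these survives in the final Gaussian formula. Verifying that the residue coefficient in \eqref{Eq5.9} simplifies to exactly $\tfrac{w_\pm(s-r)}{w_\pm s-z_\mp r}$, and that the $\pm/\mp$ pairing on both the spectral shifts and on the matrix-entry superscripts remains consistent throughout (in particular that the opposite-sign $e_{12}^{\pm}(z)$ residue carries argument $z$ and not $w$), is the main place where attention is required.
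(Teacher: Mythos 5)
Your proposal follows essentially the same route as the paper's proof: both extract the same two three-term relations among the $l_{ij}^{\pm}$ from the specializations of \eqref{Eq4.9}--\eqref{Eq4.10}, substitute the Gauss decomposition $l_{11}^{\pm}=k_1^{\pm}$, $l_{12}^{\pm}=k_1^{\pm}e_{12}^{\pm}$, $l_{21}^{\pm}=f_{21}^{\pm}k_1^{\pm}$, and cancel the common invertible factor $k_1^{\pm}(w)$ (resp.\ $k_1^{\mp}(w)$) using the commutativity of the diagonal Gaussian generators. Your write-up is in fact slightly more explicit than the paper's about which index specializations and which $k$-commutation relations are invoked at the cancellation step, but the argument is the same.
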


\begin{proof} We prove Eqs. (\ref{Eq5.8}-\ref{Eq5.9}), and the relations for $f_{21}^{\pm}(w)$ and $k_1(z)^{\pm}$ can be similarly shown.

From Eqs. (\ref{Eq4.9}-\ref{Eq4.10}) it follows that
\begin{equation*}
l_{11}^{\pm}(z)l_{12}^{\pm}(w)=\frac{w-z}{ws-zr}l_{12}^{\pm}(w)l_{11}^{\pm}(z)+\frac{w(s-r)}{ws-zr}l_{11}^{\pm}(w)l_{12}^{\pm}(z),
\end{equation*}
\begin{equation*}
l_{11}^{\pm}(z)l_{12}^{\mp}(w)=\frac{w_{\pm}-z_{\mp}}{w_{\pm}s
-z_{\mp}r}l_{12}^{\mp}(w)l_{11}^{\pm}(z)+\frac{w_{\pm}(s-r)}{w_{\pm}s-z_{\mp}r}l_{11}^{\mp}(w)l_{12}^{\pm}(z).
\end{equation*}
Then Eq (\ref{Eq5.8}) and (\ref{Eq5.9}) hold by using Eqs. (\ref{Eq5.5}-\ref{Eq5.6}) and the fact that $k_1(w)$ is invertible.

\end{proof}
From Lemma \ref{lemma5.4}, we obtain the following proposition.
\begin{proposition}
\begin{equation}\label{Eq1_prop5.5}
k_{1}^{\pm}(z)X_{1}^{+}(w)=\frac{w_{\pm}-z}{w_{\pm}s-zr}X_{1}^{+}(w)k_{1}^{\pm}(z),
\end{equation}

\begin{equation}\label{Eq2_prop5.5}
X_{1}^{-}(w)k_{1}^{\pm}(z)=\frac{w_{\mp}-z}{w_{\mp}s-zr}k_{1
}^{\pm}(z)X_{1}^{-}(w).
\end{equation}

\end{proposition}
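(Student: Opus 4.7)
The plan is to reduce both identities directly to Lemma \ref{lemma5.4} by substituting the spectral shifts $w \mapsto w_{\pm}$ and observing that the inhomogeneous ``junk'' terms cancel in the combination defining $X_1^{\pm}(w)$. Concretely, I would expand
\[
k_1^{\pm}(z)X_1^{+}(w) \;=\; k_1^{\pm}(z)\,e_{12}^{\pm}(w_\pm) \;-\; k_1^{\pm}(z)\,e_{12}^{\mp}(w_\mp),
\]
apply equation (\ref{Eq5.8}) to the first summand with $w$ replaced by $w_\pm$, and apply equation (\ref{Eq5.9}) to the second summand with $w$ replaced by $w_\mp$.

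The key computation will then be to verify that the rational prefactors agree. For the first summand, (\ref{Eq5.8}) produces the factor $\frac{w_\pm-z}{w_\pm s-zr}$ in front of $e_{12}^{\pm}(w_\pm)k_1^{\pm}(z)$, which already matches the target. For the second summand, (\ref{Eq5.9}) produces the factor $\frac{(w_\mp)_\pm - z_\mp}{(w_\mp)_\pm s - z_\mp r}$. Since $(w_\mp)_\pm = w\,s^{\pm c/2}r^{\pm c/2}$ and $z_\mp = z s^{\mp c/2}$ (resp.\ $zr^{\mp c/2}$), one factors a common power of $s^{\mp c/2}$ (resp.\ $r^{\mp c/2}$) out of numerator and denominator and obtains precisely $\frac{w_\pm - z}{w_\pm s - zr}$. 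The same cancellation shows that the two junk terms $\frac{w_\pm(s-r)}{w_\pm s - zr}\,k_1^{\pm}(z)e_{12}^{\pm}(z)$ and $\frac{(w_\mp)_\pm(s-r)}{(w_\mp)_\pm s - z_\mp r}\,k_1^{\pm}(z)e_{12}^{\pm}(z)$ coincide, so they cancel upon subtraction. Collecting the surviving terms gives
\[
k_1^{\pm}(z)X_1^{+}(w) \;=\; \frac{w_\pm - z}{w_\pm s - zr}\bigl(e_{12}^{\pm}(w_\pm) - e_{12}^{\mp}(w_\mp)\bigr)k_1^{\pm}(z) \;=\; \frac{w_\pm - z}{w_\pm s - zr}X_1^{+}(w)k_1^{\pm}(z),
\]
which is equation (\ref{Eq1_prop5.5}).

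Equation (\ref{Eq2_prop5.5}) is handled by the symmetric argument with equations (\ref{Eq5.10}) and (\ref{Eq5.11}) in place of (\ref{Eq5.8}) and (\ref{Eq5.9}), substituting $w\mapsto w_\mp$ and $w\mapsto w_\pm$ respectively into $X_1^{-}(w)=f_{21}^{+}(w_-)-f_{21}^{-}(w_+)$; the very same simplification of the spectral-shift ratios produces a common coefficient $\frac{w_\mp - z}{w_\mp s - zr}$ on the ``moved'' terms and equal inhomogeneous terms that cancel in the difference.

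I expect no serious obstacle: the only non-routine step is verifying that the rational functions of $w_\pm, z_\mp$ produced by Lemma \ref{lemma5.4} collapse to the claimed coefficients, but this is a direct algebraic identity once the definitions $w_\pm = w r^{\pm c/2}s^{\mp c/2}$ ... rather $w_+=wr^{c/2}$, $w_-=ws^{c/2}$ (and likewise for $z$) are used and one factors out the common central element from numerator and denominator. The conceptual content of the proposition is exactly the fact that the two inhomogeneous contributions coming from $e_{12}^{\pm}$ and $e_{12}^{\mp}$ (respectively $f_{21}^{\pm}$ and $f_{21}^{\mp}$) cancel in the specific linear combination defining $X_1^{\pm}(w)$, which is the whole point of the shift by $r^{\pm c/2}$, $s^{\pm c/2}$ in the definition of the Drinfeld generators.
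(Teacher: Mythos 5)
Your proposal is correct and is essentially the paper's own proof: the authors likewise specialize Lemma \ref{lemma5.4} at the shifted points $w_+$ and $w_-$, observe that the central shifts collapse both rational prefactors to $\frac{w_{\pm}-z}{w_{\pm}s-zr}$ and make the two inhomogeneous terms identical, and conclude by subtraction (with the symmetric argument via Eqs. (\ref{Eq5.10})--(\ref{Eq5.11}) for $X_1^-$). The only blemish is your uniform $\pm$ bookkeeping in writing $k_1^{\pm}(z)X_1^{+}(w)=k_1^{\pm}(z)e_{12}^{\pm}(w_{\pm})-k_1^{\pm}(z)e_{12}^{\mp}(w_{\mp})$, which for the lower sign reverses the two summands of $X_1^{+}(w)=e_{12}^{+}(w_+)-e_{12}^{-}(w_-)$; since both summands acquire the same scalar coefficient, this is harmless.
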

\begin{proof}
We only prove Eq. (\ref{Eq1_prop5.5}) to give an example.

From Eq. (\ref{Eq5.8}), we have
\begin{equation*}
k_{1}^{+}(z)e_{12}^{+}(w_{+})=\frac{w_+-z}{w_+s-zr}e_{12}^{+}(w_+)k_{1}^{+}(z)+\frac{w_+(s-r)}{w_+s-zr}k_{1}^{+}(z)e_{12}^{+}(z).
\end{equation*}
On the other hand, Eq. (\ref{Eq5.9}) gives that
\begin{eqnarray*}
k_{1}^{+}(z)e_{12}^{-}(w_{-})
&=&\frac{w_{+}-z}{w_{+}s-zr}e_{12}^{-}(w_{-})k_{1}^{+}(z)+\frac{w_{+}(s-r)}{w_{+}s-zr}k_{1}^{+}(z)e_{12}^{+}(z).
\end{eqnarray*}
Taking the difference, we prove the identity:
\begin{equation*}
k_{1}^{+}(z)X_{1}^{+}(w)=\frac{w_{+}-z}{w_{+}s-zr}X_{1}^{+}(w)k_{1}^{+}(z).
\end{equation*}
Similarly, we can prove the other identity in Eq. (\ref{Eq1_prop5.5}). \end{proof}

In the following lemma, the relations between $k_2(z)$ and $e_{12}(z)$ or $f_{21}(z)$ are given.
\begin{lemma}\label{lemma5.6}
\begin{equation}\label{Eq5.12}
e_{12}^{\pm}(z)k_{2}^{\pm}(w)^{-1}=\frac{w-z}{ws-zr}k_{2}^{\pm}(w)^{-1}e_{12}^{\pm}(z)+\frac{z(s-r)}{ws-zr}e_{12}^{\pm}(w)k_{2}^{\pm}(w)^{-1}
\end{equation}

\begin{equation}\label{Eq5.13}
e_{12}^{\pm}(z)k_{2}^{\mp}(w)^{-1}=\frac{w_{\pm}-z_{\mp}}{w_{\pm}s-z_{\mp}r}k_{2}^{\mp}(w)^{-1}e_{12}^{\pm}(z)+\frac{z_{\mp}(s-r)}{w_{\pm}s-z_{\mp}r}e_{12}^{\pm}(w)k_{2}^{\pm}(w)^{-1}
\end{equation}
\begin{equation}
k_{2}^{\pm}(w)^{-1}f_{21}^{\pm}(z)=\frac{w-z}{ws-zr}f_{21}^{\pm}(z)k_{2}^{\pm}(w)^{-1}+\frac{w(s-r)}{ws-zr}f_{21}^{\pm}(w)k_{2}^{\pm}(w)^{-1},
\end{equation}

\begin{equation}\label{Eq4_lemma5.6}
k_{2}^{\mp}(w)^{-1}f_{21}^{\pm}(z)=\frac{w_{\mp}-z_{\pm}}{w_{\mp}s-z_{\pm}r}f_{21}^{\pm}(z)k_{2}^{\mp}(w)^{-1}+\frac{w_{\mp}(s-r)}{w_{\mp}s-z_{\pm}r}f_{21}^{\mp}(w)k_{2}^{\mp}(w)^{-1},
\end{equation}

\end{lemma}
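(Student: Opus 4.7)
The proof follows the same template as Lemma~\ref{lemma5.4}, but now uses the dual RTT relations (\ref{Eq4.13})--(\ref{Eq4.14}), which are phrased in terms of the inverse-matrix entries $l^{\pm}_{ij}(w)'$. This is the natural framework here because the Gauss decomposition for $n=2$ gives $l^{\pm}_{22}(w)' = k^{\pm}_2(w)^{-1}$, so specializations of (\ref{Eq4.13})--(\ref{Eq4.14}) involving $l^{\pm}_{22}(w)'$ produce commutation relations between $k^{\pm}_2(w)^{-1}$ and the $l^{\pm}_{ij}(z)$ directly.

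Concretely, I would proceed in three steps. First, I specialize (\ref{Eq4.13}) and (\ref{Eq4.14}) at the index tuple $(p,q,r,s)=(1,2,2,2)$ to extract a same-sign and a mixed-sign identity relating $l^{\pm}_{22}(w)'$ with $l^{\pm}_{12}(z)$, each modulo a correction term of the form $l^{\pm}_{11}(z)\,l^{\pm}_{12}(w)'$; the analogous specialization at $(p,q,r,s)=(2,1,2,2)$ produces the corresponding identities for $l^{\pm}_{21}(z)$ and $l^{\pm}_{21}(w)'$. Second, I expand using the $n=2$ Gauss decomposition: from $L^{\pm}=F^{\pm}K^{\pm}E^{\pm}$ one has $l^{\pm}_{12}(z)=k^{\pm}_1(z)e^{\pm}_{12}(z)$ and $l^{\pm}_{21}(z)=f^{\pm}_{21}(z)k^{\pm}_1(z)$, and from $(L^{\pm})^{-1}=(E^{\pm})^{-1}(K^{\pm})^{-1}(F^{\pm})^{-1}$ one obtains $l^{\pm}_{22}(w)'=k^{\pm}_2(w)^{-1}$, $l^{\pm}_{12}(w)'=-e^{\pm}_{12}(w)k^{\pm}_2(w)^{-1}$, and $l^{\pm}_{21}(w)'=-k^{\pm}_2(w)^{-1}f^{\pm}_{21}(w)$. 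After substitution, a common $k^{\pm}_1(z)$ factor appears on both sides of each identity (on the left for the $e_{12}$ relations, on the right for the $f_{21}$ relations).

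Third, I invoke Lemma~\ref{lemma5.3} to move $k^{\pm}_1(z)$ past $k^{\pm}_2(w)^{-1}$ (freely, by (\ref{Eq5.5})) or past $k^{\mp}_2(w)^{-1}$ (at the cost of an explicit rational factor, by (\ref{Eq5.7})), and then cancel the invertible element $k^{\pm}_1(z)$ from both sides, obtaining the stated identities (\ref{Eq5.12})--(\ref{Eq4_lemma5.6}). The main obstacle is precisely this last step in the mixed-sign case: one must verify that the rational factor produced by (\ref{Eq5.7}) when exchanging $k^{\pm}_1(z)$ with $k^{\mp}_2(w)^{-1}$ combines exactly with the R-matrix coefficient in the specialization of (\ref{Eq4.14}) to yield the clean ratios involving the shifted parameters $z_{\pm}$ and $w_{\mp}$ on the right-hand sides of (\ref{Eq5.13}) and (\ref{Eq4_lemma5.6}). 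The underlying algebra is routine, but careful bookkeeping of the $\pm$ shifts is required to confirm the cancellations.
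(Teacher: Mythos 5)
Your proposal is correct and follows essentially the same route as the paper: specialize the entrywise RTT relations, substitute the $n=2$ Gauss decomposition, and cancel an invertible diagonal factor using Lemma~\ref{lemma5.3}. The only cosmetic difference is that the paper's displayed proof of (\ref{Eq4_lemma5.6}) specializes the purely primed relations (\ref{Eq4.12}) and cancels $k_2^{\pm}(z)^{-1}$ via (\ref{Eq5.6}), whereas you specialize the mixed relations (\ref{Eq4.13})--(\ref{Eq4.14}) and cancel $k_1^{\pm}(z)$ via (\ref{Eq5.5}) and (\ref{Eq5.7}); both work.
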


\begin{proof} Here we just prove Eq. (\ref{Eq4_lemma5.6}) to show the idea.

From the relations in (\ref{Eq4.12}), we have

\begin{equation*}
l_{22}'^{\mp}(w)l_{21}'^{\pm}(z)=\frac{w_{\mp}-z_{\pm}}{w_{\mp}s-z_{\pm}r}l_{21}'^{\pm}(z)l_{22}'^{\mp}(w)+\frac{w_{\mp}(s-r)}{w_{\mp}s-z_{\pm}r}l_{22}'^{\pm}(z)l_{21}'^{\mp}(w),
\end{equation*}

Then the Gauss decomposition and  Eq. (\ref{Eq5.6}) imply  Eq. (\ref{Eq4_lemma5.6}).
\end{proof}

The following result is a consequence of Lemma \ref{lemma5.6}.
\begin{proposition}
\begin{equation}\label{Eq1_prop5.7}
k_{2}^{\pm}(w)^{-1}X_{1}^{+}(z)k_{2}^{\pm}(w)=\frac{ws-z_{\pm}r}{w-z_{\pm}}X_{1}^{+}(z),
\end{equation}

\begin{equation}\label{Eq2_prop5.7}
k_{2}^{\pm}(w)^{-1}X_{1}^{-}(z)k_{2}^{\pm}(w)=\frac{w-z_{\mp}}{ws-z_{\mp}r}X_{1}^{-}(z).
\end{equation}

\end{proposition}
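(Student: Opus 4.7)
The plan is to mirror the proof of the previous Proposition (which established the analogous identities (\ref{Eq1_prop5.5})--(\ref{Eq2_prop5.5}) for $k_1^\pm$ acting on $X_1^\pm$), using Lemma~\ref{lemma5.6} in place of Lemma~\ref{lemma5.4}. The idea is to commute $k_2^{\pm}(w)^{-1}$ past each of the two pieces $e_{12}^+(z_+)$ and $e_{12}^-(z_-)$ of $X_1^+(z)$ separately, absorb the spectral shift into the rational coefficients, and subtract so that the residual ``extra'' terms from the two commutations match and cancel.

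Concretely, for (\ref{Eq1_prop5.7}) in the case $\pm=+$, I would apply Eq.~(\ref{Eq5.12}) with sign $+$ and the substitution $z\mapsto z_+$ to express $e_{12}^+(z_+)k_2^+(w)^{-1}$ in terms of $k_2^+(w)^{-1}e_{12}^+(z_+)$, picking up a main coefficient $\tfrac{w-z_+}{ws-z_+r}$ and an extra term proportional to $e_{12}^+(w)k_2^+(w)^{-1}$; and apply Eq.~(\ref{Eq5.13}) with $\pm=-,\mp=+$ and $z\mapsto z_-$ to get the analogous relation for $e_{12}^-(z_-)k_2^+(w)^{-1}$. The shifts $(z_-)_+=z\, s^{c/2}r^{c/2}$ and $w_-=w s^{c/2}$ produce a common factor $s^{c/2}$ in the numerator and denominator of every rational coefficient in the second relation, which cancels, leaving exactly the same main coefficient $\tfrac{w-z_+}{ws-z_+r}$ and extra coefficient $\tfrac{z_+(s-r)}{ws-z_+r}$ that appear in the first relation. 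Subtracting, the $e_{12}^+(z_+)$ and $e_{12}^-(z_-)$ pieces assemble into $X_1^+(z)$ on both sides, the extras match and cancel, and a final multiplication by $k_2^+(w)$ on the right, followed by inversion of the scalar, yields (\ref{Eq1_prop5.7}). The case $\pm=-$ is entirely analogous.

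The pivotal step, and the one on which the whole argument hinges, is the cancellation of those two extras. In the previous Proposition the corresponding extras from (\ref{Eq5.8}) and (\ref{Eq5.9}) are both visibly the single expression $k_1^+(z)e_{12}^+(z)$ and drop out of the difference by inspection. Here one must verify, by careful tracking of the $\pm$ signs in Eq.~(\ref{Eq5.13}) and the $s^{c/2}$-cancellation above, that after the substitution $z\mapsto z_-$ the extra from (\ref{Eq5.13}) is the \emph{same} element $\tfrac{z_+(s-r)}{ws-z_+r}\,e_{12}^+(w)k_2^+(w)^{-1}$ that appears as the extra in (\ref{Eq5.12}); this bookkeeping of signs (and not any deeper identity) is the main obstacle. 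Identity (\ref{Eq2_prop5.7}) for $X_1^-$ then follows by a completely parallel computation using the $f$-relations of Lemma~\ref{lemma5.6} applied to $X_1^-(z)=f_{21}^+(z_-)-f_{21}^-(z_+)$; the roles of $z_+$ and $z_-$ in the shifts are interchanged, which is precisely the swap that converts the coefficient $\tfrac{ws-z_\pm r}{w-z_\pm}$ into $\tfrac{w-z_\mp}{ws-z_\mp r}$ on the right-hand side.
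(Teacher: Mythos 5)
Your proof is correct and is essentially the paper's own argument: the paper likewise applies Eqs.\ (\ref{Eq5.12})--(\ref{Eq5.13}) to the two pieces $e_{12}^{+}(z_{+})$ and $e_{12}^{-}(z_{-})$ of $X_1^{+}(z)$ and subtracts, with the $s^{c/2}$-cancellation making both the main coefficients and the extra terms agree. One point in your favour on the ``pivotal'' bookkeeping step: the superscripts on the extra term of (\ref{Eq5.13}) as printed would give $e_{12}^{-}(w)k_2^{-}(w)^{-1}$ in the $\pm=-$ case, but re-deriving that relation from (\ref{Eq4.12}) shows the extra in fact carries the sign of $k_2^{\mp}(w)^{-1}$, i.e.\ it is $e_{12}^{+}(w)k_2^{+}(w)^{-1}$, exactly as your cancellation requires.
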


\begin{proof} As the two relations are similar, we prove Eq. (\ref{Eq1_prop5.7}). In fact, using
Eqs. (\ref{Eq5.12}-\ref{Eq5.13}), we have
\begin{equation*}
k_2^{+}(w)^{-1}e_{12}^{+}(z_+)k_2^{+}(w)=\frac{ws-z_{+}r}{w-z_{+}}e_{12}^{+}(z_{+})-\frac{z_{+}(s-r)}{w-z_{+}}e_{12}^{+}(w)
\end{equation*}
and
\begin{equation*}
k_2^{+}(w)^{-1}e_{12}^{+}(z_+)k_2^{+}(w)=\frac{ws-z_{+}r}{w-z_{+}}e_{12}^{+}(z_{+})-\frac{z_{+}(s-r)}{w-z_{+}}e_{12}^{+}(w)
\end{equation*}

\end{proof}

The relations between $e_{12}^{\pm}(z)$ and $e_{12}^{\pm}(w)$ are given in the following lemma.
\begin{lemma}\label{lemma5.8}
\begin{equation}\label{Eq1_lamma5.8}
\frac{zs-wr}{z-w}e_{12}^{\pm}(z)e_{12}^{\pm}(w)-\frac{z(s-r)}{z-w}e_{12}^{\pm}(w)^{2}
=\frac{ws-zr}{w-z}e_{12}^{\pm}(w)e_{12}^{\pm}(z)-\frac{w(s-r)}{w-z}e_{12}^{\pm}(z)^{2}
\end{equation}

\begin{equation}\label{Eq2_lemma5.8}
\frac{z_{\mp}s-w_{\pm}r}{z_{\mp}-w_{\pm}}e_{12}^{\pm}(z)e_{12}^{\mp}(w)-\frac{z_{\mp}(s-r)}{z_{\mp}-w_{\pm}}e_{12}^{\mp}(w)^{2}
=\frac{w_{\pm}s-z_{\mp}r}{w_{\pm}-z_{\mp}}e_{12}^{\mp}(w)e_{12}^{\pm}(z)-\frac{w_{\pm}(s-r)}{w_{\pm}-z_{\mp}}e_{12}^{\pm}(z)^{2}
\end{equation}

\begin{equation}
\frac{ws-zr}{w-z}f_{21}^{\pm}(z)f_{21}^{\pm}(w)-\frac{z(s-r)}{w-z}f_{21}^{\pm}(z)^{2}
=\frac{zs-wr}{z-w}f_{21}^{\pm}(w)f_{21}^{\pm}(z)-\frac{w(s-r)}{z-w}f_{21}^{\pm}(w)^{2}
\end{equation}

\begin{equation}
\frac{w_{\mp}s-z_{\pm}r}{w_{\mp}-z_{\pm}}f_{21}^{\pm}(z)f_{21}^{\mp}(w)-\frac{z_{\pm}(s-r)}{w_{\mp}-z_{\pm}}f_{21}^{\pm}(z)^{2}
=\frac{z_{\pm}s-w_{\mp}r}{z_{\pm}-w_{\mp}}f_{21}^{\mp}(w)f_{21}^{\pm}(z)-\frac{w_{\mp}(s-r)}{z_{\pm}-w_{\mp}}f_{21}^{\pm}(w)^{2}
\end{equation}

\end{lemma}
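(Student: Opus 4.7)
The plan is to reduce each of the four relations in Lemma~\ref{lemma5.8} to the known commutativity of the off-diagonal entries $l_{12}^{\pm}(z)$ (respectively $l_{21}^{\pm}(z)$) of $L^{\pm}(z)$, by substituting the Gauss decomposition and using Lemma~\ref{lemma5.4} to push the $k_1^{\pm}$ factors through. For $n=2$ the decomposition gives $l_{12}^{\pm}(z) = k_1^{\pm}(z)\,e_{12}^{\pm}(z)$ and $l_{21}^{\pm}(z) = f_{21}^{\pm}(z)\,k_1^{\pm}(z)$, while $l_{11}^{\pm}(z) = k_1^{\pm}(z)$, so everything needed is available.

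First I would specialize the matrix relation \eqref{Eq4.9} to indices $p=u=1$, $q=v=2$ (reading it with $(p,q)$ and $(u,v)$ in place of the colliding index names). All $\delta_{p<u}$, $\delta_{p>u}$, $\delta_{q<v}$, $\delta_{q>v}$ vanish, and the Kronecker deltas $\delta_{pu}$, $\delta_{qv}$ kill the coefficients $\frac{w-z}{ws-zr}$ on both sides, leaving simply
\begin{equation*}
l_{12}^{\pm}(z)\,l_{12}^{\pm}(w) = l_{12}^{\pm}(w)\,l_{12}^{\pm}(z).
\end{equation*}
The same specialization of \eqref{Eq4.10} collapses the left side to zero and yields $l_{12}^{\pm}(z)\,l_{12}^{\mp}(w) = l_{12}^{\mp}(w)\,l_{12}^{\pm}(z)$. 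Next I insert the Gauss factorization and use \eqref{Eq5.8} (or \eqref{Eq5.9} in the mixed case) rewritten as
\begin{equation*}
e_{12}^{\pm}(z)\,k_1^{\pm}(w) = k_1^{\pm}(w)\Bigl[\tfrac{zs-wr}{z-w}\,e_{12}^{\pm}(z) - \tfrac{z(s-r)}{z-w}\,e_{12}^{\pm}(w)\Bigr],
\end{equation*}
together with the commutativity \eqref{Eq5.5}--\eqref{Eq5.6} of the $k_1$ factors. After moving both inner $k_1$'s to the left on each side, the outer $k_1^{\pm}(z)k_1^{\pm}(w)$ (resp.\ $k_1^{\pm}(z)k_1^{\mp}(w)$) factors out, and cancelling it produces exactly the identity \eqref{Eq1_lamma5.8} (resp.\ \eqref{Eq2_lemma5.8}).

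For the two $f_{21}$ relations the strategy is parallel: specialize \eqref{Eq4.9}--\eqref{Eq4.10} instead to $p=u=2$, $q=v=1$ to obtain commutativity of the $l_{21}$'s, substitute $l_{21}^{\pm}(z) = f_{21}^{\pm}(z)\,k_1^{\pm}(z)$, and use the right-multiplication identities \eqref{Eq5.10}--\eqref{Eq5.11} from Lemma~\ref{lemma5.4} to move the interior $k_1$'s to the right. Cancelling the outer $k_1^{\pm}(z)k_1^{\pm/\mp}(w)$ gives the two stated identities.

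The main obstacle is purely bookkeeping rather than conceptual: in the mixed $\pm/\mp$ case one must be careful that the spectral shifts $z_{\pm}$ and $w_{\mp}$ entering \eqref{Eq5.9} and \eqref{Eq5.11} line up with the shifts that appear in the target relations \eqref{Eq2_lemma5.8} and its $f$-analogue. Once the shifts are tracked consistently and the common denominator $w_{\pm}s - z_{\mp}r$ (resp.\ $z_{\pm}s - w_{\mp}r$) is cleared, the coefficients match term by term, so no new algebraic input beyond Lemmas~\ref{lemma5.3}--\ref{lemma5.4} is required.
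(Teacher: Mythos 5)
Your proposal is correct and follows essentially the same route as the paper: both reduce the four identities to the commutativity of $l_{12}^{\pm}$ (resp.\ $l_{21}^{\pm}$) at distinct spectral parameters, insert the Gauss factorization $l_{12}^{\pm}=k_1^{\pm}e_{12}^{\pm}$, $l_{21}^{\pm}=f_{21}^{\pm}k_1^{\pm}$, and use the $k_1$--$e_{12}$ (resp.\ $k_1$--$f_{21}$) exchange relations to strip off the invertible $k_1$ factors; the paper merely re-derives those exchange relations at the level of the $l_{ij}$ entries from \eqref{Eq4.10} instead of quoting Lemma~\ref{lemma5.4}. The only slight imprecision is that the commutativity $k_1^{\pm}(z)k_1^{\pm}(w)=k_1^{\pm}(w)k_1^{\pm}(z)$ needed for the final cancellation is not literally \eqref{Eq5.5}--\eqref{Eq5.6} but the equally immediate relation $l_{11}^{\pm}(z)l_{11}^{\pm}(w)=l_{11}^{\pm}(w)l_{11}^{\pm}(z)$ obtained from \eqref{Eq4.9} with all matrix indices equal to $1$.
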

\begin{proof}
We also only prove Eq. (\ref{Eq2_lemma5.8}) since the other relations are obtained similarly.
Using Eq. (\ref{Eq4.10}), we have that
\begin{equation*}
l^{\pm}_{12}(z)l^{\mp}_{12}(w)=l^{\mp}_{12}(w)l^{\pm}_{12}(z).
\end{equation*}
Using the Gauss decomposition, we have
\begin{equation}\label{Eq5.18}
k^{\pm}_{1}(z)e^{\pm}_{12}(z)k^{\mp}_{1}(w)e^{\mp}_{12}(w)=k^{\mp}_{1}(w)e^{\mp}_{12}(w)k^{\pm}_{1}(z)e^{\pm}_{12}(z).
\end{equation}
By Eq. (\ref{Eq4.10}), we have
\begin{equation*}
l^{\pm}_{12}(z)l^{\mp}_{11}(w)=\frac{(w_{\pm}-z_{\mp})rs}{w_{\pm}s-z_{\mp}r}l^{\mp}_{11}(w)l^{\pm}_{12}(z)
+\frac{z_{\mp}(s-r)}{w_{\pm}s-z_{\mp}r}l^{\mp}_{12}(w)l^{\pm}_{11}(z),
\end{equation*}
which is equivalent to
\begin{equation*}
k^{\pm}_1(z)e^{\pm}_{12}(z)k^{\mp}_1(w)=\frac{(w_{\pm}-z_{\mp})rs}{w_{\pm}s-z_{\mp}r}k^{\mp}_{1}(w)k_1^{\pm}(z)e^{\pm}_{12}(z)
+\frac{z_{\mp}(s-r)}{w_{\pm}s-z_{\mp}r}k_{1}^{\mp}(w)e^{\mp}_{12}(w)k^{\pm}_{1}(z).
\end{equation*}
Thus from Eq. (\ref{Eq5.18}) we have
\begin{align}\notag
\frac{(w_{\pm}-z_{\mp})rs}{w_{\pm}s-z_{\mp}r}k^{\mp}_{1}(w)k_1^{\pm}(z)e^{\pm}_{12}(z)e^{\mp}_{12}(w)
&=k^{\mp}_{1}(w)e^{\mp}_{12}(w)k^{\pm}_{1}(z)e^{\pm}_{12}(z)\\ \label{Eq5.19}
&-\frac{z_{\mp}(s-r)}{w_{\pm}s-z_{\mp}r}k_{1}^{\mp}(w)e^{\mp}_{12}(w)k^{\pm}_{1}(z)e^{\mp}_{12}(w).
\end{align}
Moreover, using Eq(\ref{Eq4.10}) again, we can obtain
\begin{equation*}
l^{\pm}_{11}(z)l^{\mp}_{12}(w)=\frac{(w_{\pm}-z_{\mp})}{w_{\pm}s-z_{\mp}r}l^{\mp}_{12}(w)l^{\pm}_{11}(z)
+\frac{w_{\pm}(s-r)}{w_{\pm}s-z_{\mp}r}l^{\mp}_{11}(w)l^{\pm}_{12}(z),
\end{equation*}
which is equivalent to
\begin{equation*}
k^{\pm}_1(z)k^{\mp}_1(w)e^{\mp}_{12}(w)=\frac{(w_{\pm}-z_{\mp})}{w_{\pm}s-z_{\mp}r}k^{\mp}_{1}(w)e^{\mp}_{12}(w)k_1^{\pm}(z)
+\frac{w_{\pm}(s-r)}{w_{\pm}s-z_{\mp}r}k_{1}^{\mp}(w)k^{\pm}_{1}(z)e^{\pm}_{12}(z).
\end{equation*}
Then the right hand side of Eq. (\ref{Eq5.19}) equals to
\begin{align*}
&\frac{w_{\pm}s-z_{\mp}r}{w_{\pm}-z_{\mp}}k^{\pm}_1(z)k^{\mp}_1(w)e^{\mp}_{12}(w)e^{\pm}_{12}(z)+
\frac{w_{\pm}(s-r)}{w_{\pm}-z_{\mp}}k^{\mp}_1(w)k^{\pm}_1(z)e^{\pm}_{12}(z)^2\\
&-\frac{z_{\mp}(s-r)}{w_{\pm}-z_{\mp}}k^{\pm}_1(z)k^{\mp}_1(w)e^{\mp}_{12}(w)^2+
\frac{z_{\mp}(s-r)}{w_{\pm}s-z_{\mp}r}\frac{w_{\pm}(s-r)}{w_{\pm}-z_{\mp}}k^{\mp}_1(w)k^{\pm}_1(z)e^{\pm}_{12}(z)e^{\mp}_{12}(w).
\end{align*}
Thus from Eqs. (\ref{Eq5.19}-\ref{Eq5.6}) we prove Eq. (\ref{Eq2_lemma5.8}).
\end{proof}

From lemma \ref{lemma5.8}, we can obtain the following proposition.
\begin{proposition}
\begin{align}\label{Eq1_prop5.9}
(zs-wr)X_1^{+}(z)X_{1}^{+}(w)&=(zr-ws)X_{1}^{+}(w)X_{1}^{+}(z),\\ \label{Eq2_prop5.9}
(ws-zr)X_1^{-}(z)X_{1}^{-}(w)&=(wr-zs)X_{1}^{-}(w)X_{1}^{-}(z),
\end{align}
\end{proposition}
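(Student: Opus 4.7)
The plan is to derive both quadratic relations from Lemma~\ref{lemma5.8} by a signed four-term expansion. Write $X_{1}^{+}(z)=e_{12}^{+}(z_{+})-e_{12}^{-}(z_{-})$ and set $a_{\varepsilon}:=e_{12}^{\varepsilon}(z_{\varepsilon})$, $b_{\varepsilon}:=e_{12}^{\varepsilon}(w_{\varepsilon})$ for $\varepsilon\in\{+,-\}$. I expand
\begin{equation*}
T:=(zs-wr)X_{1}^{+}(z)X_{1}^{+}(w)-(zr-ws)X_{1}^{+}(w)X_{1}^{+}(z)
\end{equation*}
as $\sum_{\varepsilon,\varepsilon'}(\varepsilon\varepsilon')\,T_{\varepsilon\varepsilon'}$, where $T_{\varepsilon\varepsilon'}:=(zs-wr)a_{\varepsilon}b_{\varepsilon'}-(zr-ws)b_{\varepsilon'}a_{\varepsilon}$ and $\varepsilon\varepsilon'$ obeys the usual sign rule $(+)(+)=(-)(-)=+$, $(+)(-)=(-)(+)=-$.

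For each sign pattern I invoke the matching case of Lemma~\ref{lemma5.8}: Eq.~(\ref{Eq1_lamma5.8}) for the diagonal patterns $(\varepsilon,\varepsilon')=(\pm,\pm)$, and Eq.~(\ref{Eq2_lemma5.8}) for the mixed ones. Substituting $(z,w)\mapsto(z_{\varepsilon},w_{\varepsilon'})$ into the lemma, all of the coefficients $z_{-\varepsilon}s-w_{-\varepsilon'}r$ and $w_{-\varepsilon'}s-z_{-\varepsilon}r$ pick up a common overall factor $r^{c/2}$, $s^{c/2}$, or $(rs)^{c/2}$ depending on the pattern; this same factor also appears on the right-hand side of Lemma~\ref{lemma5.8} and divides out uniformly. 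After this rescaling each case reduces to the single residue
\begin{equation*}
T_{\varepsilon\varepsilon'}=z(s-r)\,b_{\varepsilon'}^{\,2}+w(s-r)\,a_{\varepsilon}^{\,2}.
\end{equation*}

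Summing the four residues with the prescribed signs, the $z(s-r)$-contribution collapses via $b_{+}^{\,2}-b_{-}^{\,2}-b_{+}^{\,2}+b_{-}^{\,2}=0$, and the $w(s-r)$-contribution vanishes identically. This yields $T=0$, which is Eq.~(\ref{Eq1_prop5.9}). The mirror proof of Eq.~(\ref{Eq2_prop5.9}) is obtained by substituting $X_{1}^{-}(z)=f_{21}^{+}(z_{-})-f_{21}^{-}(z_{+})$ into $(ws-zr)X_{1}^{-}(z)X_{1}^{-}(w)-(wr-zs)X_{1}^{-}(w)X_{1}^{-}(z)$ and repeating the four-term expansion using the $f_{21}^{\pm}$ identities of Lemma~\ref{lemma5.8}; the swap of pairings (namely $e^{\pm}$ paired with $z_{\pm}$ versus $f^{\pm}$ paired with $z_{\mp}$) is exactly what interchanges the coefficients $(zs-wr)\leftrightarrow(zr-ws)$ between the two statements.

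The only real obstacle is the central-charge bookkeeping in the two mixed cases $(\varepsilon,\varepsilon')=(+,-),(-,+)$: there the shifts collapse as $(z_{\varepsilon})_{-\varepsilon}=z(rs)^{c/2}$ and $(w_{\varepsilon'})_{-\varepsilon'}=w(rs)^{c/2}$, and one must verify that the common factor $(rs)^{c/2}$ appears in every coefficient of Eq.~(\ref{Eq2_lemma5.8}) and therefore divides out. Once this is checked, the four residues $T_{\varepsilon\varepsilon'}$ are literally identical in form, and the cancellation is a formal one-line computation.
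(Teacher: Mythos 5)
Your proof is correct and follows essentially the same route as the paper: both apply the four instances of Lemma~\ref{lemma5.8} obtained from the sign patterns $(\pm,\pm)$ and $(\pm,\mp)$ with the shifted arguments $z_{\pm},w_{\pm}$, observe that the central-charge factors cancel uniformly in every coefficient, and take the signed four-term combination so that the square terms $e_{12}^{\varepsilon}(z_{\varepsilon})^{2}$, $e_{12}^{\varepsilon'}(w_{\varepsilon'})^{2}$ cancel, leaving the stated quadratic relation. The only differences are cosmetic (you clear the denominators and phrase each case as a common residue $T_{\varepsilon\varepsilon'}$, and you make the $(rs)^{c/2}$ bookkeeping explicit where the paper leaves it implicit).
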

\begin{proof} We prove Eq. (\ref{Eq1_prop5.9}) to illustrate the idea.
From Eq. (\ref{Eq1_lamma5.8}) we have that
\begin{align*}
\frac{zs-wr}{z-w}e_{12}^{+}(z_{+})e_{12}^{+}(w_{+})-\frac{z(s-r)}{z-w}e_{12}^{+}(w_{+})^{2}
&=\frac{ws-zr}{w-z}e_{12}^{+}(w_{+})e_{12}^{+}(z_{+})-\frac{w(s-r)}{w-z}e_{12}^{+}(z_{+})^{2},\\
\frac{zs-wr}{z-w}e_{12}^{-}(z_{-})e_{12}^{-}(w_{-})-\frac{z(s-r)}{z-w}e_{12}^{-}(w_{-})^{2}
&=\frac{ws-zr}{w-z}e_{12}^{-}(w_{-})e_{12}^{-}(z_{-})-\frac{w(s-r)}{w-z}e_{12}^{-}(z_{-})^{2}.
\end{align*}
Using Eq. (\ref{Eq2_lemma5.8}), we get the following identities:
\begin{align*}
\frac{zs-wr}{z-w}e_{12}^{+}(z_{+})e_{12}^{-}(w_{-})-\frac{z(s-r)}{z-w}e_{12}^{-}(w_{-})^{2}
&=\frac{ws-zr}{w-z}e_{12}^{-}(w_{-})e_{12}^{+}(z_{+})-\frac{w(s-r)}{w-z}e_{12}^{+}(z_{+})^{2},\\
\frac{zs-wr}{z-w}e_{12}^{-}(z_{-})e_{12}^{+}(w_{+})-\frac{z(s-r)}{z-w}e_{12}^{+}(w_{+})^{2}
&=\frac{ws-zr}{w-z}e_{12}^{+}(w_{+})e_{12}^{-}(z_{-})-\frac{w(s-r)}{w-z}e_{12}^{-}(z_{-})^{2}.
\end{align*}

Thus we have that
\begin{equation*}
\frac{zs-wr}{z-w}X_1^{+}(z)X_1^{+}(w)=\frac{ws-zr}{w-z}X_1^{+}(w)X_1^{+}(z),
\end{equation*}
which completes the proof of Eq. (\ref{Eq1_prop5.9}).

\end{proof}

Now we are in a position to give the commutation relations between $e_{12}^{\pm}(z)$ and $f_{21}^{\pm}(w)$.
\begin{lemma}\label{lemma5.10}
\begin{align}\label{Eq1_lemma5.10}
[e_{12}^{\pm}(z),f_{21}^{\pm}(w)]&=\frac{z(s-r)}{w-z}(k_{2}^{\pm}(w)k_{1}^{\pm}(w)^{-1}-k_{2}^{\pm}(z)k_{1}^{\pm}(z)^{-1}),\\ \label{Eq2_lemma5.10}
[e_{12}^{\pm}(z),f_{21}^{\mp}(w)]&=\frac{z_{\mp}(s-r)}{w_{\pm}-z_{\mp}}k_{2}^{\mp}(w)k_{1}^{\mp}(w)^{-1}
-\frac{z_{\pm}(s-r)}{w_{\mp}-z_{\pm}}k_{2}^{\pm}(z)k_{1}^{\pm}(z)^{-1}.
\end{align}
\end{lemma}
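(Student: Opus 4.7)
The strategy follows the Ding--Frenkel scheme adapted to the two-parameter spectral $R$-matrix. First I would specialize the RTT relation (\ref{Eq4.9}) to the indices $(p,q,r,t)=(1,2,2,1)$ and $(2,1,1,2)$ to produce an identity of the shape
\begin{equation*}
(w-z)\bigl(l^{\pm}_{12}(z)l^{\pm}_{21}(w)-rs\,l^{\pm}_{21}(w)l^{\pm}_{12}(z)\bigr)
=z(s-r)\bigl(l^{\pm}_{22}(w)l^{\pm}_{11}(z)-l^{\pm}_{22}(z)l^{\pm}_{11}(w)\bigr),
\end{equation*}
and analogously from (\ref{Eq4.10}) with the spectral shifts $z_{\pm}$, $w_{\mp}$ for the mixed $\pm/\mp$ case. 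These are the only RTT entries that simultaneously involve $l_{12}(z)l_{21}(w)$, $l_{21}(w)l_{12}(z)$ and the diagonal products $l_{11}l_{22}$.

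Next I would substitute the Gauss decompositions $l^{\pm}_{11}(z)=k^{\pm}_1(z)$, $l^{\pm}_{12}(z)=k^{\pm}_1(z)e^{\pm}_{12}(z)$, $l^{\pm}_{21}(z)=f^{\pm}_{21}(z)k^{\pm}_1(z)$, and $l^{\pm}_{22}(z)=k^{\pm}_2(z)+f^{\pm}_{21}(z)k^{\pm}_1(z)e^{\pm}_{12}(z)$ into the identity above. The $fk_1e$ piece inside $l^{\pm}_{22}$ is essential: the two contributions $f^{\pm}_{21}(w)k^{\pm}_1(w)k^{\pm}_1(z)e^{\pm}_{12}(z)$ and $f^{\pm}_{21}(z)k^{\pm}_1(z)k^{\pm}_1(w)e^{\pm}_{12}(w)$ arising from $l_{22}(w)l_{11}(z)$ and $l_{22}(z)l_{11}(w)$ pair up with $rs\,l^{\pm}_{21}(w)l^{\pm}_{12}(z)$ and $l^{\pm}_{12}(z)l^{\pm}_{21}(w)$ respectively, and their imbalance (governed by Lemmas \ref{lemma5.4} and \ref{lemma5.6}) is exactly what produces the commutator $[e^{\pm}_{12}(z),f^{\pm}_{21}(w)]$ after one pulls a common factor $k^{\pm}_1(z)$ on the left and $k^{\pm}_1(w)$ on the right (which is legal since $k^{\pm}_1(w)$ is invertible, and by Lemma \ref{lemma5.3} the two $k^{\pm}_1$'s and the $k^{\pm}_2$'s commute in the same-sign case).

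Simplifying, the cross terms collapse to $\frac{z(s-r)}{w-z}\bigl(k^{\pm}_2(w)k^{\pm}_1(w)^{-1}-k^{\pm}_2(z)k^{\pm}_1(z)^{-1}\bigr)$, establishing (\ref{Eq1_lemma5.10}). For the mixed case (\ref{Eq2_lemma5.10}), the same mechanism is applied to (\ref{Eq4.10}); now the spectral shifts enter asymmetrically because the factor $R(z_{\pm}/w_{\mp})$ produces $z_{\pm}r-w_{\mp}s$ while its partner produces $z_{\mp}r-w_{\pm}s$. Consequently the prefactors in front of $k^{\mp}_2(w)k^{\mp}_1(w)^{-1}$ and $k^{\pm}_2(z)k^{\pm}_1(z)^{-1}$ no longer combine into a single rational fraction, which is precisely the origin of the asymmetric expression stated in the lemma.

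The main obstacle is the bookkeeping of the rational coefficients that appear each time Lemma \ref{lemma5.4} or \ref{lemma5.6} is invoked to shuffle a $k^{\pm}_1$ across an $e^{\pm}_{12}$ or $f^{\pm}_{21}$. One must verify that the combination of the RTT coefficient $(w-z)^{-1}$, the factor $z(s-r)$, and the Lemma \ref{lemma5.4}/\ref{lemma5.6} corrections with denominators $ws-zr$ conspire so that all terms carrying $fke$ factors cancel and only the clean contribution $k^{\pm}_2 (k^{\pm}_1)^{-1}$ survives; specialization to $r=q$, $s=q^{-1}$ recovers the Ding--Frenkel formula and serves as a consistency check.
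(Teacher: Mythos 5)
Your starting identity is correctly extracted from (\ref{Eq4.9}): the $(1,2,2,1)$ component does give
\begin{equation*}
(w-z)\bigl(l^{\pm}_{12}(z)l^{\pm}_{21}(w)-rs\,l^{\pm}_{21}(w)l^{\pm}_{12}(z)\bigr)
=z(s-r)\bigl(l^{\pm}_{22}(w)l^{\pm}_{11}(z)-l^{\pm}_{22}(z)l^{\pm}_{11}(w)\bigr),
\end{equation*}
and your route --- direct RTT entries plus the full quadratic Gauss decomposition $l^{\pm}_{22}=k^{\pm}_2+f^{\pm}_{21}k^{\pm}_1e^{\pm}_{12}$ --- is genuinely different from the paper's. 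The paper instead works from (\ref{Eq4.14}), i.e.\ the relations mixing $L^{\pm}(z)$ with the \emph{inverse} matrix $L^{\mp}(w)^{-1}$, whose relevant entries $l_{22}'(w)=k_2(w)^{-1}$ and $l_{21}'(w)=-k_2(w)^{-1}f_{21}(w)$ are linear in the Gaussian generators. That choice makes the plain commutator $[e_{12}^{\pm}(z),f_{21}^{\mp}(w)]$ appear after a single substitution, sandwiched by the invertible prefactor $k_1^{\pm}(z)k_2^{\mp}(w)^{-1}$, with the two $k$-terms coming cleanly from $l_{11}l_{11}'$ and $l_{22}'l_{22}$; no quadratic $fke$ expression ever enters.

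The gap in your plan is precisely the step you defer to ``bookkeeping'': the passage from the $rs$-twisted combination $l_{12}(z)l_{21}(w)-rs\,l_{21}(w)l_{12}(z)$ to the untwisted commutator $k_1(z)[e_{12}(z),f_{21}(w)]k_1(w)$. After substituting the Gauss factors and pushing $k_1(z)$ to the left and $k_1(w)$ to the right via Lemmas \ref{lemma5.4} and \ref{lemma5.6}, the term $f_{21}(w)\cdots e_{12}(z)$ acquires the coefficient $-rs\,\frac{(w-z)^2}{(ws-zr)(zs-wr)}$ rather than $-1$, so the commutator does \emph{not} emerge termwise; it only emerges after the correction terms in $f_{21}(z)$, $e_{12}(w)$ generated by those lemmas cancel against the $fk_1e$ pieces of $l_{22}(w)l_{11}(z)-l_{22}(z)l_{11}(w)$. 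That cancellation is the entire content of the lemma and is asserted, not shown. A second unaddressed point: in the mixed case the analogue of your right-hand side coming from the $(1,2,2,1)$ component of (\ref{Eq4.10}) contains only \emph{one} diagonal product (with coefficient proportional to $z_{\pm}-z_{\mp}$), so the two terms $k_2^{\mp}(w)k_1^{\mp}(w)^{-1}$ and $k_2^{\pm}(z)k_1^{\pm}(z)^{-1}$ of (\ref{Eq2_lemma5.10}) must be assembled from two different matrix components; your plan treats the mixed case as a verbatim repeat of the same-sign case, which it is not. The route can very likely be completed, but as written it stops short of the actual proof; if you want to pursue it, I would recommend switching to the paper's use of (\ref{Eq4.14}), which eliminates both difficulties at the outset.
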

\begin{proof} As these two commutation relations are proved similarly, we take Eq. (\ref{Eq2_lemma5.10}) to show the derivation.
Note that from Eq. (\ref{Eq4.14}) we have that
\begin{equation}\label{Eq1_proof5.10}
l_{12}^{\pm}(z)l_{21}'^{\mp}(w)+\frac{z_{\mp}(s-r)}{w_{\pm}s-z_{\mp}r}l_{11}^{\pm}(z)l_{11}'^{\mp}(w)=
l_{21}'^{\mp}(w)l_{12}^{\pm}(z)+\frac{z_{\pm}(s-r)}{w_{\mp}s-z_{\pm}r}l_{22}'^{\mp}(w)l_{22}^{\pm}(z)
\end{equation}
Moreover, Eq. (\ref{Eq4.14}) also gives that
\begin{align}\label{Eq2_proof5.10}
l_{12}^{\pm}(z)l_{22}'^{\mp}(w)&=
\frac{w_{\mp}-z_{\pm}}{w_{\mp}s-z_{\pm}r}l_{22}'^{\mp}(w)l_{12}^{\pm}(z)
-\frac{z_{\mp}(s-r)}{w_{\pm}s-z_{\mp}r}l_{11}^{\pm}(z)l_{12}'^{\mp}(w),\\ \label{Eq3_proof5.10}
l_{21}'^{\mp}(w)l_{11}^{\pm}(z)&=
\frac{w_{\pm}-z_{\mp}}{w_{\pm}s-z_{\mp}r}l_{11}^{\pm}(z)l_{21}'^{\mp}(w)
-\frac{z_{\pm}(s-r)}{w_{\mp}s-z_{\pm}r}l_{22}'^{\mp}(w)l_{21}^{\pm}(z).
\end{align}
Substituting Eqs. (\ref{Eq2_proof5.10}-\ref{Eq3_proof5.10}) into Eq. (\ref{Eq1_proof5.10}),
we get the left hand side of Eq. (\ref{Eq1_proof5.10}):
\begin{equation*}
LHS=\frac{w_{\mp}-z_{\pm}}{w_{\mp}s-z_{\pm}r}k_2^{\mp}(w)^{-1}k_1^{\pm}(z)e_{12}^{\pm}(z)f_{21}^{\mp}(w)
+\frac{z_{\mp}(s-r)}{w_{\pm}s-z_{\mp}r}k_1^{\pm}(z)k_1^{\mp}(w)^{-1},
\end{equation*}
as well as the right hand side:
\begin{equation*}
RHS=\frac{w_{\pm}-z_{\mp}}{w_{\pm}s-z_{\mp}r}k_1^{\pm}(z)k_2^{\mp}(w)^{-1}f_{21}^{\mp}(w)e_{12}^{\pm}(z)
+\frac{z_{\pm}(s-r)}{w_{\mp}s-z_{\pm}r}k_2^{\mp}(w)^{-1}k_2^{\pm}(z).
\end{equation*}
Recall that Lemma \ref{lemma5.3} implies
\begin{equation*}
\frac{w_{\mp}-z_{\pm}}{w_{\mp}s-z_{\pm}r}k_{2}^{\mp}(w)^{-1}k_{1}^{\pm}(z)=\frac{w_{\pm}-z_{\mp}}{w_{\pm}s-z_{\mp}r}k_{1}^{\pm}(z)k_{2}^{\mp}(w)^{-1}.
\end{equation*}
Therefore,
\begin{align}\label{Eq4_proof5.10}
\begin{aligned}
&\frac{w_{\pm}-z_{\mp}}{w_{\pm}s-z_{\mp}r}k_1^{\pm}(z)k_2^{\mp}(w)^{-1}[e_{12}^{\pm}(z),f_{21}^{\mp}(w)]\\
&=
-\frac{z_{\mp}(s-r)}{w_{\pm}s-z_{\mp}r}k_1^{\pm}(z)k_1^{\mp}(w)^{-1}
+\frac{z_{\pm}(s-r)}{w_{\mp}s-z_{\pm}r}k_2^{\mp}(w)^{-1}k_2^{\pm}(z).
\end{aligned}
\end{align}
Moreover, using Eq. (\ref{Eq5.7}) we have that
\begin{align*}
k_{1}^{\pm}(z)^{-1}k_{2}^{\mp}(w)^{-1}=
\frac{w_{\pm}-z_{\mp}}{w_{\pm}s-z_{\mp}r}\frac{w_{\mp}s-z_{\pm}r}{w_{\mp}-z_{\pm}}k_{2}^{\mp}(w)^{-1}k_{1}^{\pm}(z)^{-1}.
\end{align*}
Left multiplying $k_{2}^{\mp}(w)k_{1}^{\pm}(z)^{-1}$ on both sides of Eq. (\ref{Eq4_proof5.10}), we get
Eq. (\ref{Eq2_lemma5.10}).
\end{proof}

\begin{proposition}
\begin{align}\notag
[X^{+}_1(z),X^{-}_1(w)]&=(r-s)\delta_{ij}\{\delta(zw^{-1}(r^{-1}s)^{\frac{c}{2}})k_{i+1}^{-}(w_{+})k_{i}^{-}(w_{+})^{-1}-\\
&\quad\qquad\qquad \delta(zw^{-1}(rs^{-1})^{\frac{c}{2}})k_{i+1}^{+}(z_{+})k_{i}^{+}(z_{+})^{-1}\}
\end{align}
\end{proposition}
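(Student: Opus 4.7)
The plan is to exploit bilinearity and apply Lemma \ref{lemma5.10} four times. Substituting $X_1^+(z)=e_{12}^+(z_+)-e_{12}^-(z_-)$ and $X_1^-(w)=f_{21}^+(w_-)-f_{21}^-(w_+)$, the commutator decomposes as
\[
[X_1^+(z),X_1^-(w)] = [e_{12}^+(z_+),f_{21}^+(w_-)] - [e_{12}^+(z_+),f_{21}^-(w_+)] - [e_{12}^-(z_-),f_{21}^+(w_-)] + [e_{12}^-(z_-),f_{21}^-(w_+)].
\]
The two matched-sign commutators are evaluated via (\ref{Eq1_lemma5.10}) and the two mixed-sign commutators via (\ref{Eq2_lemma5.10}). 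Each produces a linear combination of ``Gaussian ratios'' $k_2^\pm(\cdot)k_1^\pm(\cdot)^{-1}$ with rational prefactors, the arguments being drawn from $\{z_+,z_-,w_+,w_-\}$.

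Next I would group the resulting terms by Gaussian factor. Four candidate factors appear: $k_2^+(w_-)k_1^+(w_-)^{-1}$ (from the first and third commutators), $k_2^-(z_-)k_1^-(z_-)^{-1}$ (from the third and fourth), $k_2^+(z_+)k_1^+(z_+)^{-1}$ (from the first and second), and $k_2^-(w_+)k_1^-(w_+)^{-1}$ (from the second and fourth). The two ``off-diagonal'' contributions $k^+(w_-)$ and $k^-(z_-)$ should cancel identically because they are evaluated from formulas expanded in the same formal direction, whereas the two ``diagonal'' contributions combine pieces coming from formulas expanded in opposite directions, so that each surviving coefficient is the difference of one rational function expanded in $|w|>|z|$ versus $|z|>|w|$.

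At this point the key input is the standard formal delta identity
\[
\iota_{w,z}\frac{w}{w-z}-\iota_{z,w}\frac{w}{w-z}=\delta\!\left(\frac{z}{w}\right),
\]
applied once with $(z,w)=(z_+,w_-)$ and once with $(z,w)=(z_-,w_+)$. This converts the surviving differences into $\delta(z_+/w_-)=\delta(zw^{-1}(rs^{-1})^{c/2})$ multiplying $k_2^+(z_+)k_1^+(z_+)^{-1}$, and $\delta(z_-/w_+)=\delta(zw^{-1}(r^{-1}s)^{c/2})$ multiplying $k_2^-(w_+)k_1^-(w_+)^{-1}$. Absorbing the overall sign $(s-r)=-(r-s)$ reproduces the right-hand side of the proposition.

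The main obstacle is bookkeeping: each rational coefficient coming out of Lemma \ref{lemma5.10} is a formal power series whose direction of expansion (in $z/w$ or $w/z$) is inherited from the defining RTT relations (\ref{Eq4.9})--(\ref{Eq4.14}) through the Gauss decomposition, and this direction depends on whether one is computing $[e^+,f^+]$ versus $[e^+,f^-]$. Only with these conventions faithfully tracked do the off-diagonal contributions cancel and the diagonal ones assemble into formal delta functions. A subsidiary nuisance is the correct interpretation of compound shifts such as $(z_+)_-$ created when one substitutes $z\mapsto z_+$ into a formula that already contains $z_\pm$; these must be read consistently as products of $r^{\pm c/2}$ and $s^{\pm c/2}$, after which the coefficient comparisons become routine.
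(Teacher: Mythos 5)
Your proposal is correct and coincides with the paper's own argument: the paper likewise expands $[X_1^{+}(z),X_1^{-}(w)]$ bilinearly into the four commutators $[e_{12}^{\pm}(z_{\pm}),f_{21}^{\pm}(w_{\mp})]$ and $[e_{12}^{\pm}(z_{\pm}),f_{21}^{\mp}(w_{\pm})]$ obtained from Lemma \ref{lemma5.10}, records the expansion direction of each rational prefactor (either direction for the matched-sign cases, $z/w$ for the mixed ones), and then lets the matched terms cancel while the two remaining oppositely expanded kernels assemble into $\delta(z_{+}/w_{-})$ and $\delta(z_{-}/w_{+})$. The only difference is presentational: you make the grouping-by-Gaussian-factor and the delta identity explicit, whereas the paper leaves that bookkeeping implicit after displaying the four specialized commutators.
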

\begin{proof}
From Lemma \ref{lemma5.10} it follows that
\begin{align}\label{Eq1_proof5.11}
[e_{12}^{+}(z_{+}),f_{21}^{+}(w_{-})]&=
\frac{z_{+}(s-r)}{w_{-}-z_{+}}(k_{2}^{+}(w_{-})k_{1}^{+}(w_{-})^{-1}-k_{2}^{+}(z_{+})k_{1}^{+}(z_{+})^{-1}),\\ \label{Eq2_proof5.11}
[e_{12}^{-}(z_{-}),f_{21}^{-}(w_{+})]&=
\frac{z_{-}(s-r)}{w_{+}-z_{-}}(k_{2}^{-}(w_{+})k_{1}^{-}(w_{+})^{-1}-k_{2}^{-}(z_{-})k_{1}^{-}(z_{-})^{-1}),\\ \label{Eq3_proof5.11}
[e_{12}^{+}(z_{+}),f_{21}^{-}(w_{+})]&=
\frac{z_{-}(s-r)}{w_{+}-z_{-}}k_{2}^{-}(w_{+})k_{1}^{-}(w_{+})^{-1}-\frac{z_{+}(s-r)}{w_{-}-z_{+}}k_{2}^{+}(z_{+})k_{1}^{+}(z_{+})^{-1},\\
\label{Eq4_proof5.11}
[e_{12}^{-}(z_{-}),f_{21}^{+}(w_{-})]&=
\frac{z_{+}(s-r)}{w_{-}-z_{+}}k_{2}^{+}(w_{-})k_{1}^{+}(w_{-})^{-1}-\frac{z_{-}(s-r)}{w_{+}-z_{-}}k_{2}^{-}(z_{-})k_{1}^{-}(z_{-})^{-1},
\end{align}
where the fractions $\frac1{w_{\mp}-z_{\pm}}$ in Eqs. (\ref{Eq1_proof5.11}-\ref{Eq2_proof5.11}) are regarded as power series of $\frac{w}{z}$ or $\frac{z}{w}$,
and the fractions $\frac1{w_{\pm}-z_{\mp}}$ in Eqs. (\ref{Eq3_proof5.11}-\ref{Eq4_proof5.11}) are expanded as power series of $\frac{z}{w}$.
%
%
%
Note that $X^+(z)=e_{12}^+(z_+)-e_{12}^-(z_-)$ and $X^-(w)=f_{21}^+(w_-)-f_{21}^-(w_+)$, so we get that
\begin{equation*}
\begin{aligned}
&\frac1{r-s}[X^{+}_1(z),X^{-}_1(w)]=\\
&\delta_{ij}\{\delta(zw^{-1}(r^{-1}s)^{\frac{c}{2}})k_{i+1}^{-}(w_{+})k_{i}^{-}(w_{+})^{-1}-
\delta(zw^{-1}(rs^{-1})^{\frac{c}{2}})k_{i+1}^{+}(z_{+})k_{i}^{+}(z_{+})^{-1}\}.
\end{aligned}
\end{equation*}
\end{proof}


\subsection{The case $n=3$}
We shall begin with the case of $n=3$, where the Serre relations appear.

When restricting the generating relations (\ref{generating relation1}), (\ref{generating relation2}) to $E_{ij}\otimes E_{kl}$,
$1\leq i,j\leq 2$, we get that
\begin{align}
R_2(\frac{z}{w})J_1^{\pm}(z)J_{2}^{\pm}(w)&=J_{2}^{\pm}(w)J_1^{\pm}(z)R_2(\frac{z}{w}),\\
R_2(\frac{z_{+}}{w_{-}})J_1^{+}(z)J_{2}^{-}(w)&=J_{2}^{-}(w)J_1^{+}(z)R_2(\frac{z_{-}}{w_{+}}),
\end{align}
where $R_2(z)$ denotes the R-matrix for $n=2$ and
\begin{equation*}
J^{\pm}(z)=\begin{pmatrix}
       1 & 0 \\
       f^{\pm}_{21}(z) & 1 \\
     \end{pmatrix}
     \begin{pmatrix}
       k^{\pm}_1(z) & 0  \\
       0 & k_{2}^{\pm}(z) \\
     \end{pmatrix}
     \begin{pmatrix}
       1 & e_{12}^{\pm}(z) \\
       0 & 1 \\
     \end{pmatrix}.
\end{equation*}
Thus our argument for $n=2$ applies to the entries. Similarly, we consider the equivalent generating relations
(\ref{equivalent generating relation1}), (\ref{equivalent generating relation2}), and
restrict them to
$E_{ij}\otimes E_{kl}$, $2\leq i,j\leq 3$ , then we have
\begin{align*}
\widetilde{J}_{1}^{\pm}(z)^{-1}\widetilde{J}_{2}^{\pm}(w)^{-1}R_{2}(\frac{z}{w})&=
R_{2}(\frac{z}{w})\widetilde{J}_{2}^{\pm}(w)^{-1}\widetilde{J}_{1}^{\pm}(z)^{-1},\\
\widetilde{J}_{1}^{+}(z)^{-1}\widetilde{J}_{2}^{-}(w)^{-1}R_{2}(\frac{z_{+}}{w_{-}}&)=
R_{2}(\frac{z_{-}}{w_{+}})\widetilde{J}_{2}^{-}(w)^{-1}\widetilde{J}_{1}^{+}(z)^{-1},
\end{align*}
where
\begin{equation*}
\widetilde{J}^{\pm}(z)=
\begin{pmatrix}
  1 & 0 \\
  f^{\pm}_{32}(z) & 1 \\
\end{pmatrix}
\begin{pmatrix}
  k_2^{\pm}(z) & 0 \\
  0 & k_{3}^{\pm}(z) \\
\end{pmatrix}
\begin{pmatrix}
  1 & e_{23}^{\pm}(z) \\
  0 & 1 \\
\end{pmatrix}.
\end{equation*}
So the entries also satisfy the commutation relations as the case of $n=2$. Now we need the relations between $k_1^{\pm}(z)$, $e_{12}^{\pm}(z)$, or $f_{21}^{\pm}(z)$
and $k_{3}^{\pm}(z)$, $e_{23}^{\pm}(z)$, or $f_{32}^{\pm}(z)$.

\begin{lemma}\label{lemma6.1}

\begin{equation}
k_{1}^{\pm}(z)k_{3}^{\pm}(w)=k_{3}^{\pm}(w)k_{1}^{\pm}(z)
\end{equation}

\begin{equation}\label{Eq6.4}
\frac{w_{\pm}-z_{\mp}}{w_{\pm}s-z_{\mp}r}k_{1}^{\pm}(z)k_{3}^{\mp}(w)^{-1}=
\frac{w_{\mp}-z_{\pm}}{w_{\mp}s-z_{\pm}r}k_{3}^{\mp}(w)^{-1}k_{1}^{\pm}(z)
\end{equation}

\end{lemma}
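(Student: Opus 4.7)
The plan is to derive both identities directly from the RTT relations written in the form of equations \eqref{Eq4.13} and \eqref{Eq4.14}, using the fact that in the $n=3$ setting the extremal Gauss generator admits a very clean description in terms of the inverse matrix entries. Specifically, from the Gauss decomposition $L^{\pm}(z) = F^{\pm}(z)K^{\pm}(z)E^{\pm}(z)$ with $F^{\pm}$ lower unitriangular, $E^{\pm}$ upper unitriangular and $K^{\pm}$ diagonal, inverting gives $L^{\pm}(z)^{-1} = E^{\pm}(z)^{-1}K^{\pm}(z)^{-1}F^{\pm}(z)^{-1}$, and the $(3,3)$-entry reduces to $l_{33}^{\pm}(z)' = k_{3}^{\pm}(z)^{-1}$. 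Combined with the evident identity $k_{1}^{\pm}(z) = l_{11}^{\pm}(z)$, this will let me recast both identities as commutation relations between $l_{11}^{\pm}(z)$ and $l_{33}^{\pm}(w)'$, which are already encoded in the generating series relations of Remark 3.3.

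To prove the first identity, I would specialize \eqref{Eq4.13} to $p=q=1$, $r=s=3$. Since $\delta_{ps}=\delta_{qr}=0$ and all the sum-terms carry $\delta_{ps}$ or $\delta_{qr}$ as a prefactor, those vanish; what remains on each side is a single term with coefficient $(w-z)/(ws-zr)$, giving $l_{33}^{\pm}(w)' l_{11}^{\pm}(z) = l_{11}^{\pm}(z) l_{33}^{\pm}(w)'$. Multiplying on both sides by $k_3^{\pm}(w)$ yields $k_{1}^{\pm}(z)k_{3}^{\pm}(w) = k_{3}^{\pm}(w)k_{1}^{\pm}(z)$, as claimed.

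For the second identity, I would apply the same specialization $p=q=1$, $r=s=3$ to the mixed relation \eqref{Eq4.14}. Again $\delta_{ps}=\delta_{qr}=0$ kills all sum contributions, while $\delta_{p<s}=\delta_{q<r}=1$ selects the remaining fractions. The surviving relation is
\[
\frac{w_{\mp}-z_{\pm}}{w_{\mp}s-z_{\pm}r}\,l_{33}^{\mp}(w)'\,l_{11}^{\pm}(z)
\;=\;
\frac{w_{\pm}-z_{\mp}}{w_{\pm}s-z_{\mp}r}\,l_{11}^{\pm}(z)\,l_{33}^{\mp}(w)',
\]
which after substituting $l_{11}^{\pm}(z) = k_1^{\pm}(z)$ and $l_{33}^{\mp}(w)' = k_3^{\mp}(w)^{-1}$ becomes precisely \eqref{Eq6.4}.

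There is essentially no serious obstacle here beyond bookkeeping: the whole argument is an extraction of two scalar matrix-entry relations from the master equations already derived in Remark 3.3. The one place that requires a small check is confirming $l_{33}^{\pm}(z)' = k_{3}^{\pm}(z)^{-1}$ in the $n=3$ case, which follows from the Gauss decomposition as above (and was in fact already used implicitly in Lemma \ref{lemma5.3} for $n=2$ via $l_{22}'^{\pm}(w) = k_{2}^{\pm}(w)^{-1}$). Beyond that, the entire proof is an index specialization together with the observation that the off-diagonal delta functions annihilate all correction terms.
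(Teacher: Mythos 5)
Your proposal is correct and follows essentially the same route as the paper: specializing the master relations \eqref{Eq4.13}--\eqref{Eq4.14} to the entries $l_{11}^{\pm}(z)$ and $l_{33}^{\mp}(w)'$ and then identifying $l_{11}^{\pm}(z)=k_1^{\pm}(z)$, $l_{33}^{\pm}(w)'=k_3^{\pm}(w)^{-1}$ via the Gauss decomposition. The only difference is that you spell out both identities (and the vanishing of the delta-weighted sum terms) where the paper checks only \eqref{Eq6.4} explicitly.
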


\begin{proof}
Here we just check Eq. (\ref{Eq6.4}).
Using relations (\ref{Eq4.14}), we have
\begin{equation*}
\frac{w_{\pm}-z_{\mp}}{w_{\pm}s-z_{\mp}r}l_{11}^{\pm}(z)l_{33}'^{\mp}(w)=
\frac{w_{\mp}-z_{\pm}}{w_{\mp}s-z_{\pm}r}l_{33}'^{\mp}(w)l_{11}^{\pm}(z),
\end{equation*}
which is just equivalent to Eq. (\ref{Eq6.4}) by using the Gauss decomposition.
\end{proof}

In the following lemma, we will give the relations between $k_{1}^{\pm}(z)$ and $e_{23}^{\pm}(z)$, $f_{32}^{\pm}(z)$.
\begin{lemma}\label{lemma6.2}
\begin{align}
k_{1}^{\pm}(z)e_{23}^{\pm}(w)&=e_{23}^{\pm}(w)k_{1}^{\pm}(z),\\ \label{Eq6.6}
k_{1}^{\pm}(z)e_{23}^{\mp}(w)&=e_{23}^{\mp}(w)k_{1}^{\pm}(z),\\
k_{1}^{\pm}(z)f_{32}^{\pm}(w)&=f_{32}^{\pm}(w)k_{1}^{\pm}(z),\\
k_{1}^{\pm}(z)f_{32}^{\mp}(w)&=f_{32}^{\mp}(w)k_{1}^{\pm}(z)
\end{align}
\end{lemma}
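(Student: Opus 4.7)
The plan is to reduce Lemma 6.2 to a commutation statement for $l_{11}^{\pm}(z)=k_1^{\pm}(z)$ against the Schur complement block built from indices $\{2,3\}$. Define
$$\tilde{l}_{ij}^{\pm}(w) \; := \; l_{ij}^{\pm}(w) - l_{i1}^{\pm}(w)\,l_{11}^{\pm}(w)^{-1}\,l_{1j}^{\pm}(w), \qquad i,j\in\{2,3\}.$$
By the definition of the Gauss factors in terms of quasi-determinants, one has $k_2^{\pm}(w)=\tilde{l}_{22}^{\pm}(w)$, $e_{23}^{\pm}(w)=k_2^{\pm}(w)^{-1}\tilde{l}_{23}^{\pm}(w)$, and $f_{32}^{\pm}(w)=\tilde{l}_{32}^{\pm}(w)\,k_2^{\pm}(w)^{-1}$, so the four stated identities will follow from the single auxiliary claim
$$l_{11}^{\epsilon}(z)\,\tilde{l}_{ij}^{\epsilon'}(w) \; = \; \tilde{l}_{ij}^{\epsilon'}(w)\,l_{11}^{\epsilon}(z), \qquad i,j\in\{2,3\},\ \epsilon,\epsilon'\in\{+,-\}.$$

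First, I would settle the same-sign case $\epsilon=\epsilon'$ by specializing Eq.\,(\ref{Eq4.9}) at the three index patterns $(p,q,u,v)=(1,1,i,j)$, $(1,1,i,1)$, $(1,1,1,j)$. These yield explicit formulas for $l_{11}^{\pm}(z)l_{ij}^{\pm}(w)$, $l_{11}^{\pm}(z)l_{i1}^{\pm}(w)$, and $l_{11}^{\pm}(z)l_{1j}^{\pm}(w)$ as linear combinations of reversed products plus ``correction'' monomials of the form $l_{i1}(z)l_{1j}(w)$ or $l_{i1}(w)l_{1j}(z)$. Expanding $l_{11}^{\pm}(z)\tilde{l}_{ij}^{\pm}(w)$ via the definition, using $l_{11}^{\pm}(z)l_{11}^{\pm}(w)=l_{11}^{\pm}(w)l_{11}^{\pm}(z)$ from (\ref{Eq5.5}) to move $l_{11}(z)$ past $l_{11}(w)^{-1}$, and collecting the coefficients, one finds that all correction terms cancel in pairs, leaving precisely $\tilde{l}_{ij}^{\pm}(w)\,l_{11}^{\pm}(z)$.

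For the opposite-sign case $\epsilon\neq\epsilon'$, the identical strategy works with Eq.\,(\ref{Eq4.10}) replacing (\ref{Eq4.9}): the shifted parameters $z_{\pm}, w_{\mp}$ appear, and one uses (\ref{Eq5.6}) to transport $l_{11}^{\pm}(z)$ across $l_{11}^{\mp}(w)^{-1}$. Once the auxiliary claim is in hand, the conclusion is immediate: from $[l_{11}^{\pm}(z),\tilde{l}_{22}^{\epsilon'}(w)]=0$ one gets $[k_1^{\pm}(z),k_2^{\epsilon'}(w)^{\pm1}]=0$, and combining this with $[l_{11}^{\pm}(z),\tilde{l}_{23}^{\epsilon'}(w)]=0$ (resp.\ $\tilde{l}_{32}^{\epsilon'}$) produces commutation of $k_1^{\pm}(z)$ with $e_{23}^{\epsilon'}(w)$ (resp.\ $f_{32}^{\epsilon'}(w)$), which is the content of the lemma.

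The main obstacle is the tracking of the rational coefficients in the opposite-sign calculation. In (\ref{Eq4.10}), the two sides carry different denominators $w_{\mp}s-z_{\pm}r$ and $w_{\pm}s-z_{\mp}r$, and the cancellation of the three types of ``impurity'' terms $l_{i1}(w)l_{1j}(z)$, $l_{i1}(z)l_{1j}(w)$, and the mixed $l_{11}(z)l_{11}(w)^{-1}$-factor requires noting that the prefactor $\frac{s-r}{w_{\pm}s-z_{\mp}r}$ arising from the $l_{1j}$-relation exactly matches the prefactor produced by chaining through the $l_{i1}$-relation. Once this bookkeeping is performed in one of the four cases (say $l_{11}^{+}(z)\tilde{l}_{23}^{-}(w)$), the remaining three cases are obtained either by symmetric arguments or by swapping $+\leftrightarrow -$, so the bulk of the work is this single verification.
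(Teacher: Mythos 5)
Your reduction via the Schur complement $\tilde{l}_{ij}^{\pm}(w)=l_{ij}^{\pm}(w)-l_{i1}^{\pm}(w)l_{11}^{\pm}(w)^{-1}l_{1j}^{\pm}(w)$ is a legitimate route, and in the same-sign case your auxiliary claim is correct: specializing (\ref{Eq4.9}) at $(1,1,i,j)$, $(1,1,i,1)$, $(1,1,1,j)$ and using $[l_{11}^{\pm}(z),l_{11}^{\pm}(w)]=0$, the correction terms do cancel in pairs and $l_{11}^{\pm}(z)$ commutes with $\tilde{l}_{ij}^{\pm}(w)$. However, the auxiliary claim is \emph{false} as stated in the opposite-sign case, and this is a genuine gap. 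Take $i=j=2$: then $\tilde{l}_{22}^{\mp}(w)=k_2^{\mp}(w)$, and Eq.~(\ref{Eq5.7}) of the paper says precisely that $k_1^{\pm}(z)$ and $k_2^{\mp}(w)$ do \emph{not} commute when $c\neq 0$; they only quasi-commute, $\frac{w_{\mp}-z_{\pm}}{w_{\mp}s-z_{\pm}r}k_1^{\pm}(z)k_2^{\mp}(w)=\frac{w_{\pm}-z_{\mp}}{w_{\pm}s-z_{\mp}r}k_2^{\mp}(w)k_1^{\pm}(z)$, because the two sides of the mixed $RLL$ relation carry the distinct arguments $z_{\pm}/w_{\mp}$ and $z_{\mp}/w_{\pm}$. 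So your intermediate assertion ``from $[l_{11}^{\pm}(z),\tilde{l}_{22}^{\epsilon'}(w)]=0$ one gets $[k_1^{\pm}(z),k_2^{\epsilon'}(w)^{\pm1}]=0$'' breaks down for $\epsilon'=\mp$. The repair is straightforward but must be made explicit: what the mixed-sign bookkeeping actually yields is the quasi-commutation $\frac{w_{\mp}-z_{\pm}}{w_{\mp}s-z_{\pm}r}\,l_{11}^{\pm}(z)\,\tilde{l}_{ij}^{\mp}(w)=\frac{w_{\pm}-z_{\mp}}{w_{\pm}s-z_{\mp}r}\,\tilde{l}_{ij}^{\mp}(w)\,l_{11}^{\pm}(z)$ with the \emph{same} scalar factor for all $i,j\in\{2,3\}$, and it is only in the combinations $e_{23}^{\mp}=k_2^{\mp}(w)^{-1}\tilde{l}_{23}^{\mp}(w)$ and $f_{32}^{\mp}=\tilde{l}_{32}^{\mp}(w)k_2^{\mp}(w)^{-1}$ that the two factors cancel and honest commutation with $k_1^{\pm}(z)$ emerges.

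For comparison, the paper avoids this bookkeeping entirely: it works with the entries of $L^{\mp}(w)^{-1}$, for which the Gauss decomposition gives $l_{23}'^{\mp}(w)=-e_{23}^{\mp}(w)k_3^{\mp}(w)^{-1}$, and the $L$--$L^{-1}$ exchange relation (\ref{Eq4.14}) with disjoint index sets has \emph{no} correction terms at all, reducing the lemma to the single scalar identity (\ref{Eq6.4}) relating $k_1^{\pm}(z)$ and $k_3^{\mp}(w)^{-1}$. Your approach is more self-contained (it never leaves the unprimed generators) but pays for it with the cancellation analysis above; if you keep it, state the mixed-sign auxiliary claim in its quasi-commutation form and verify the common factor explicitly.
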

\begin{proof} We show Eq. (\ref{Eq6.6}), and the other relations can be proved similarly.
We can obtain the following equation from the equivalent generating relations (\ref{Eq4.14})
\begin{equation*}
\frac{w_{\pm}-z_{\mp}}{w_{\pm}s-z_{\mp}r}l_{11}^{\pm}(z)l_{23}'^{\mp}(w)=
\frac{w_{\mp}-z_{\pm}}{w_{\mp}s-z_{\pm}r}l_{23}'^{\mp}(w)l_{11}^{\pm}(z),
\end{equation*}
which is equivalent to the following equation in terms of Gauss generators
\begin{equation*}
\frac{w_{\pm}-z_{\mp}}{w_{\pm}s-z_{\mp}r}k_{1}^{\pm}(z)e_{23}^{\mp}(w)k_{3}^{\mp}(w)^{-1}=
\frac{w_{\mp}-z_{\pm}}{w_{\mp}s-z_{\pm}r}e_{23}^{\mp}(w)k_{3}^{\mp}(w)^{-1}k_{1}^{\pm}(z).
\end{equation*}
Now using Eq. (\ref{Eq6.4}), we have
\begin{equation*}
\frac{w_{\pm}-z_{\mp}}{w_{\pm}s-z_{\mp}r}k_{1}^{\pm}(z)e_{23}^{\mp}(w)k_{3}^{\mp}(w)^{-1}=
\frac{w_{\pm}-z_{\mp}}{w_{\pm}s-z_{\mp}r}e_{23}^{\mp}(w)k_{1}^{\pm}(z)k_{3}^{\mp}(w)^{-1},
\end{equation*}
which is equivalent to Eq. (\ref{Eq6.6}).\end{proof}

Similarly, we give the relations between $k^{\pm}_{3}(z)$ and $e_{12}(z)$, $f_{21}(z)$ in
the following lemma.

\begin{lemma}\label{lemma6.3}
\begin{align}
k_{3}^{\pm}(z)e_{12}^{\pm}(w)&=e_{12}^{\pm}(w)k_{3}^{\pm}(z),\\
k_{3}^{\pm}(z)e_{12}^{\mp}(w)&=e_{12}^{\mp}(w)k_{3}^{\pm}(z),\\
k_{3}^{\pm}(z)f_{21}^{\pm}(w)&=f_{21}^{\pm}(w)k_{3}^{\pm}(z),\\
k_{3}^{\pm}(z)f_{21}^{\mp}(w)&=f_{21}^{\mp}(w)k_{1}^{\pm}(z).
\end{align}
\end{lemma}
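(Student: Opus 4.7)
The plan is to mirror the strategy used in the proof of Lemma \ref{lemma6.2}, swapping the roles of indices $1$ and $3$. The key algebraic point is that while $k_1^{\pm}(z) = l_{11}^{\pm}(z)$ is literally a diagonal entry of $L^{\pm}(z)$, the Gaussian generator $k_3^{\pm}(z)$ is accessed through the inverse matrix: $l_{33}'^{\pm}(z) = k_3^{\pm}(z)^{-1}$. For this reason I would draw on the equivalent generating relations \eqref{Eq4.13}--\eqref{Eq4.14}, whose structure mixes one primed factor with one unprimed factor; these are precisely the identities that pair an $l_{33}'$-type generator with an $l_{12}$- or $l_{21}$-type generator.

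For the $\pm\pm$ relation $k_3^{\pm}(z)e_{12}^{\pm}(w) = e_{12}^{\pm}(w)k_3^{\pm}(z)$, I would specialize \eqref{Eq4.13} to $p=1$, $q=2$, $r=s=3$. Every Kronecker $\delta$ involving an equality between a member of $\{1,2\}$ and $3$ vanishes, and the only surviving terms collapse to
\[
\frac{w-z}{ws-zr}\,l_{33}'^{\pm}(w)\,l_{12}^{\pm}(z) \;=\; \frac{w-z}{ws-zr}\,l_{12}^{\pm}(z)\,l_{33}'^{\pm}(w).
\]
Substituting $l_{12}^{\pm}(z) = k_1^{\pm}(z)e_{12}^{\pm}(z)$ and $l_{33}'^{\pm}(w) = k_3^{\pm}(w)^{-1}$, and then using the commutativity $k_1^{\pm}(z)k_3^{\pm}(w) = k_3^{\pm}(w)k_1^{\pm}(z)$ from Lemma \ref{lemma6.1} to transport $k_3^{\pm}(w)^{-1}$ past $k_1^{\pm}(z)$, the common factor $k_1^{\pm}(z)$ cancels (it is invertible) and the desired commutation falls out. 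The mixed-sign case uses \eqref{Eq4.14} with the same index choice; rational coefficients $\tfrac{w_{\pm}-z_{\mp}}{w_{\pm}s-z_{\mp}r}$ appear on both sides, and in place of straight commutativity one invokes Eq.~\eqref{Eq6.4} to exchange $k_1^{\pm}(z)$ with $k_3^{\mp}(w)^{-1}$, after which the same cancellation finishes the argument.

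For the two identities involving $f_{21}^{\pm}$, the argument is entirely parallel: I would specialize \eqref{Eq4.13} and \eqref{Eq4.14} to $p=2$, $q=1$, $r=s=3$, obtaining (after the same vanishing of cross terms) $l_{33}'^{\pm}(w)\,l_{21}^{\pm}(z) = l_{21}^{\pm}(z)\,l_{33}'^{\pm}(w)$ and its mixed-sign analog. Using $l_{21}^{\pm}(z) = f_{21}^{\pm}(z)k_1^{\pm}(z)$ and again applying Lemma \ref{lemma6.1} to commute $k_1^{\pm}$ and $k_3^{\pm}$ (and invertibility of $k_1^{\pm}$), the commutation of $k_3^{\pm}(w)$ with $f_{21}^{\pm}(z)$ (and the mixed-sign version) follows.

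I do not anticipate any genuine obstacle, since the Gauss entries involved are ``far apart'' in the matrix and Lemma \ref{lemma6.1} already handles all required $k_i$--$k_j$ cross relations. The only bookkeeping hazard is making the right choice among the four equivalent systems \eqref{Eq4.9}--\eqref{Eq4.14}, so that the desired $l$-factors with the correct pattern of primes appear on the same side of a single identity. Once that choice is made the failure of the Kronecker conditions $\delta_{ps}$ and $\delta_{qr}$ eliminates every non-trivial correction term, and the proof reduces to essentially a one-line manipulation by Gauss decomposition and Lemma \ref{lemma6.1}.
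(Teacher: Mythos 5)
Your proposal is correct and matches the paper's (omitted, ``similarly''-style) argument: it is the exact mirror of the proof of Lemma \ref{lemma6.2}, specializing Eqs.~\eqref{Eq4.13}--\eqref{Eq4.14} so that the primed entry $l_{33}'=k_3^{-1}$ meets $l_{12}=k_1e_{12}$ or $l_{21}=f_{21}k_1$, with all correction terms killed by the vanishing Kronecker deltas, and then cancelling $k_1^{\pm}$ via Lemma \ref{lemma6.1} and Eq.~\eqref{Eq6.4}. Note only that your computation confirms the last displayed identity should read $k_3^{\pm}(z)$ on the right-hand side; the $k_1^{\pm}(z)$ in the statement is a typo.
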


Using Lemmas \ref{lemma6.2} and \ref{lemma6.3}, we can easily get the following proposition.
\begin{proposition}
\begin{align}
k_{1}^{\pm}(z)X^{\pm}_{2}(w)&=X^{\pm}_{2}(w)k_{1}^{\pm}(z),\\ \label{Eq2_prop6.4}
k_{1}^{\pm}(z)X^{\mp}_{2}(w)&=X^{\mp}_{2}(w)k_{1}^{\pm}(z),\\
k_{3}^{\pm}(z)X^{\pm}_{1}(w)&=X^{\pm}_{1}(w)k_{3}^{\pm}(z),\\
k_{3}^{\pm}(z)X^{\mp}_{1}(w)&=X^{\mp}_{1}(w)k_{3}^{\pm}(z).
\end{align}
\end{proposition}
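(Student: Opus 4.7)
The proof is essentially a direct assembly from the two immediately preceding lemmas. The plan is to substitute the defining expressions
\[
X_2^{+}(w)=e_{23}^{+}(w_{+})-e_{23}^{-}(w_{-}), \qquad X_2^{-}(w)=f_{32}^{+}(w_{-})-f_{32}^{-}(w_{+}),
\]
and likewise for $X_1^{\pm}(w)$, into each identity and then use the commutation relations from Lemma \ref{lemma6.2} (for $k_1^{\pm}$ against $e_{23}^{\pm}, f_{32}^{\pm}$) and Lemma \ref{lemma6.3} (for $k_3^{\pm}$ against $e_{12}^{\pm}, f_{21}^{\pm}$) term by term.

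Concretely, to verify the first identity $k_1^{\pm}(z)X_2^{\pm}(w)=X_2^{\pm}(w)k_1^{\pm}(z)$, I would write
\[
k_1^{\pm}(z)X_2^{\pm}(w)=k_1^{\pm}(z)e_{23}^{\pm}(w_{\pm})-k_1^{\pm}(z)e_{23}^{\mp}(w_{\mp}),
\]
and note that the first summand commutes by the first relation of Lemma \ref{lemma6.2} while the second commutes by the second relation (Eq.~(\ref{Eq6.6})) of Lemma \ref{lemma6.2}, giving exactly $X_2^{\pm}(w)k_1^{\pm}(z)$. The identity $k_1^{\pm}(z)X_2^{\mp}(w)=X_2^{\mp}(w)k_1^{\pm}(z)$ is handled in the same way, invoking the third and fourth relations of Lemma \ref{lemma6.2} on the two summands of $X_2^{\mp}(w)=f_{32}^{\mp}(w_{\mp})-f_{32}^{\pm}(w_{\pm})$.

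The remaining two identities involving $k_3^{\pm}(z)$ and $X_1^{\pm}(w)$ are entirely symmetric: I expand $X_1^{\pm}(w)$ into its two constituents and apply the four commutation relations of Lemma \ref{lemma6.3} to each term individually. Since every commutator that appears is already known to vanish from those lemmas, no spectral-parameter factors survive and the sum telescopes cleanly to the desired equality.

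There is no substantive obstacle here; the only thing one must be careful about is keeping the shifts $w_+=wr^{c/2}$ and $w_-=ws^{c/2}$ consistent when matching the arguments of $e_{23}, f_{32}, e_{12}, f_{21}$ against the $k_i^{\pm}$-commutation rules of Lemmas \ref{lemma6.2}--\ref{lemma6.3} (these rules hold for both the $+$ and $-$ choices of the inner generator, which is why all four sub-cases of each identity simplify uniformly). Thus the proposition follows immediately from term-by-term application of the two lemmas.
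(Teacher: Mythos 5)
Your proof is correct and takes essentially the same route as the paper, which likewise obtains the proposition immediately from Lemmas \ref{lemma6.2} and \ref{lemma6.3} by expanding $X_2^{\pm}(w)$ and $X_1^{\pm}(w)$ into their two constituents and commuting term by term. The only caveat is a harmless notational slip: $X_2^{+}(w)=e_{23}^{+}(w_+)-e_{23}^{-}(w_-)$ is built from $e_{23}$'s while $X_2^{-}(w)=f_{32}^{+}(w_-)-f_{32}^{-}(w_+)$ is built from $f_{32}$'s, so the upper and lower sign choices of each displayed identity actually invoke different lines of Lemma \ref{lemma6.2} (and the superscript of the inner generator is opposite to the subscript of its argument); since every line of those lemmas asserts plain commutativity with no spectral factors, the conclusion is unaffected.
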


In the following lemma, we give the relations between $e_{12}^{\pm}(z)$, $e_{23}^{\pm}(z)$ and
$f_{32}^{\pm}(z)$, $f_{21}^{\pm}(z)$.
\begin{lemma}\label{lemma6.5}
\begin{align}
[e_{12}^{\pm}(z),f_{32}^{\pm}(w)]&=0,\\ \label{Eq6.18}
[e_{12}^{\pm}(z),f_{32}^{\mp}(w)]&=0,\\
[e_{23}^{\pm}(z),f_{21}^{\pm}(w)]&=0,\\
[e_{23}^{\pm}(z),f_{21}^{\mp}(w)]&=0
\end{align}
\end{lemma}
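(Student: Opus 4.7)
The plan is to apply the equivalent generating relations (\ref{Eq4.13}) and (\ref{Eq4.14}) with tensor indices chosen so that the $l$-entries involved are directly expressible in terms of the Gauss generators appearing in the lemma. For the first two identities I would take $(p,q,r,s)=(1,2,3,2)$, which isolates a product of $l_{12}^{\pm}(z)$ and $l_{32}^{\mp}(w)'$ (or $l_{32}^{\pm}(w)'$ in (\ref{Eq4.13})). For the remaining two identities I would take $(p,q,r,s)=(2,1,2,3)$, isolating $l_{21}^{\pm}(z)$ and $l_{23}^{\mp}(w)'$. In each case $\delta_{ps}=\delta_{qr}=0$, so every summation term in (\ref{Eq4.13})--(\ref{Eq4.14}) drops out and a clean two-term scalar identity remains; for the mixed-sign case one obtains, for example,
\[
\frac{w_{\mp}-z_{\pm}}{w_{\mp}s-z_{\pm}r}\,l_{32}^{\mp}(w)'\,l_{12}^{\pm}(z) = \frac{w_{\pm}-z_{\mp}}{w_{\pm}s-z_{\mp}r}\,l_{12}^{\pm}(z)\,l_{32}^{\mp}(w)'.
\]

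Next I substitute the Gauss decomposition. A direct computation from $L^{-1}=E^{-1}K^{-1}F^{-1}$ gives $l_{12}^{\pm}(z)=k_1^{\pm}(z)e_{12}^{\pm}(z)$, $l_{21}^{\pm}(z)=f_{21}^{\pm}(z)k_1^{\pm}(z)$, $l_{32}^{\pm}(w)'=-k_3^{\pm}(w)^{-1}f_{32}^{\pm}(w)$, and $l_{23}^{\pm}(w)'=-e_{23}^{\pm}(w)k_3^{\pm}(w)^{-1}$. By Lemma \ref{lemma6.1}, the diagonal factors $k_1^{\pm}(z)$ and $k_3^{\mp}(w)^{-1}$ satisfy precisely the scalar relation whose ratio of prefactors matches the one appearing in the displayed RTT identity above. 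By Lemma \ref{lemma6.2} the factor $k_1^{\pm}(z)$ commutes with both $f_{32}^{\mp}(w)$ and $e_{23}^{\mp}(w)$, while by Lemma \ref{lemma6.3} the factor $k_3^{\mp}(w)$ commutes with both $e_{12}^{\pm}(z)$ and $f_{21}^{\pm}(z)$. Collecting all $k$-factors to one side, the scalar prefactors cancel, and after dividing by the invertible operator $k_1^{\pm}(z)\,k_3^{\mp}(w)^{-1}$ the desired commutativity follows.

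The same-sign cases reduce by the same argument applied to (\ref{Eq4.13}) in place of (\ref{Eq4.14}); there the two scalar prefactors in the RTT identity are already equal, so only the internal commutations of Lemmas \ref{lemma6.2}--\ref{lemma6.3} are needed. The main technical point is bookkeeping: one must choose the correct sign variant of (\ref{Eq4.13})/(\ref{Eq4.14}), verify the Gauss decomposition formulas for the inverse-matrix entries $l_{ij}'$, and confirm that the scalar ratio produced by Lemma \ref{lemma6.1} precisely cancels the ratio of scalar coefficients on the two sides of the RTT identity. Once this matching is confirmed, each of the four commutativity statements becomes an essentially one-line consequence.
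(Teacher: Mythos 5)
Your proposal is correct and follows essentially the same route as the paper: the paper likewise specializes the mixed relation (\ref{Eq4.14}) to obtain $\frac{w_{\pm}-z_{\mp}}{w_{\pm}s-z_{\mp}r}l_{12}^{\pm}(z)l_{32}'^{\mp}(w)=\frac{w_{\mp}-z_{\pm}}{w_{\mp}s-z_{\pm}r}l_{32}'^{\mp}(w)l_{12}^{\pm}(z)$, substitutes $l_{12}^{\pm}=k_1^{\pm}e_{12}^{\pm}$ and $l_{32}'^{\mp}=-k_3^{\mp-1}f_{32}^{\mp}$, and then cancels the $k$-factors exactly via Lemmas \ref{lemma6.1}--\ref{lemma6.3}. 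The only cosmetic quibble is that $l_{12}^{\pm}$ and $l_{21}^{\pm}$ come from $L=FKE$ rather than from $L^{-1}=E^{-1}K^{-1}F^{-1}$, but the formulas you state are the correct ones.
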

\begin{proof} These identities are similar, so we only prove Eq. (\ref{Eq6.18}). From relation (\ref{Eq4.14}) it follows that
\begin{equation*}
\frac{w_{\pm}-z_{\mp}}{w_{\pm}s-z_{\mp}r}l_{12}^{\pm}(z)l_{32}'^{\mp}(w)=
\frac{w_{\mp}-z_{\pm}}{w_{\mp}s-z_{\pm}r}l_{32}'^{\mp}(w)l_{12}^{\pm}(z),
\end{equation*}
which is equivalent to the following equation in terms of Gauss generators£º
\begin{equation*}
\frac{w_{\pm}-z_{\mp}}{w_{\pm}s-z_{\mp}r}k_{1}^{\pm}(z)e_{12}^{\pm}(z)k_{3}^{\mp}(w)^{-1}f_{32}^{\mp}(w)=
\frac{w_{\mp}-z_{\pm}}{w_{\mp}s-z_{\pm}r}k_{3}^{\mp}(w)^{-1}f_{32}^{\mp}(w)k_{1}^{\pm}(z)e_{12}^{\pm}(z).
\end{equation*}
Now using Lemmas \ref{lemma6.2} and \ref{lemma6.3}, we have that
\begin{equation*}
\frac{w_{\pm}-z_{\mp}}{w_{\pm}s-z_{\mp}r}k_{1}^{\pm}(z)k_{3}^{\mp}(w)^{-1}e_{12}^{\pm}(z)f_{32}^{\mp}(w)=
\frac{w_{\mp}-z_{\pm}}{w_{\mp}s-z_{\pm}r}k_{3}^{\mp}(w)^{-1}k_{1}^{\pm}(z)f_{32}^{\mp}(w)e_{12}^{\pm}(z).
\end{equation*}
Furthermore, the following is obtained by using Eq (\ref{Eq6.4}).
\begin{equation*}
\frac{w_{\pm}-z_{\mp}}{w_{\pm}s-z_{\mp}r}k_{1}^{\pm}(z)k_{3}^{\mp}(w)^{-1}e_{12}^{\pm}(z)f_{32}^{\mp}(w)=
\frac{w_{\pm}-z_{\mp}}{w_{\pm}s-z_{\mp}r}k_{1}^{\pm}(z)k_{3}^{\mp}(w)^{-1}f_{32}^{\mp}(w)e_{12}^{\pm}(z),
\end{equation*}
which is equivalent to Eq. (\ref{Eq6.18}).
\end{proof}

As a consequence of Lemma \ref{lemma6.5}, we have the following result.
\begin{proposition} One has that
\begin{align}
[X^{+}_1(z),X^{-}_{2}(w)]&=0,\\
[X^{+}_2(z),X^{-}_{1}(w)]&=0.
\end{align}
\end{proposition}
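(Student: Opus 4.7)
The proposition is immediate from Lemma \ref{lemma6.5} once we unpack the definitions of the currents $X^{\pm}_i(z)$, so my plan is essentially an expansion argument with no hidden difficulty.

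The plan is to prove each of the two displayed identities separately but in parallel fashion. For the first identity, I would substitute the definitions
\[
X_{1}^{+}(z) = e_{12}^{+}(z_{+}) - e_{12}^{-}(z_{-}), \qquad X_{2}^{-}(w) = f_{32}^{+}(w_{-}) - f_{32}^{-}(w_{+}),
\]
and use bilinearity of the commutator to write
\[
[X_{1}^{+}(z), X_{2}^{-}(w)] = \sum_{\epsilon_1, \epsilon_2 \in \{+,-\}} \pm [e_{12}^{\epsilon_1}(z_{\epsilon_1}), f_{32}^{\epsilon_2}(w_{-\epsilon_2})],
\]
with appropriate signs coming from the $-$ sign in each current. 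Each of the four resulting commutators is of the form $[e_{12}^{\pm}(z'), f_{32}^{\pm}(w')]$ or $[e_{12}^{\pm}(z'), f_{32}^{\mp}(w')]$ for shifted spectral parameters, and therefore vanishes by Lemma \ref{lemma6.5}. The sum is therefore identically zero.

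The second identity $[X_{2}^{+}(z), X_{1}^{-}(w)] = 0$ proceeds in exactly the same way, expanding
\[
X_{2}^{+}(z) = e_{23}^{+}(z_{+}) - e_{23}^{-}(z_{-}), \qquad X_{1}^{-}(w) = f_{21}^{+}(w_{-}) - f_{21}^{-}(w_{+}),
\]
and invoking the two remaining identities $[e_{23}^{\pm}(z), f_{21}^{\pm}(w)] = 0$ and $[e_{23}^{\pm}(z), f_{21}^{\mp}(w)] = 0$ from Lemma \ref{lemma6.5} to kill all four resulting commutators term by term.

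There is no real obstacle here: the substance of the proposition has already been absorbed into Lemma \ref{lemma6.5}, whose proof in turn relies on the Gauss decomposition together with the relations (\ref{Eq4.14}) and the commutativity of $k_{1}^{\pm}(z)$ with $k_{3}^{\mp}(w)^{\pm 1}$. Once those ingredients are in place, the proposition is a one-line consequence. The only thing worth being careful about is bookkeeping with the shifts by $r^{c/2}$ and $s^{c/2}$ in the arguments of $e_{12}^{\pm}$ and $f_{32}^{\pm}$ (and similarly for $e_{23}^{\pm}, f_{21}^{\pm}$); since Lemma \ref{lemma6.5} is proved for arbitrary spectral parameters $z, w$, these shifts cause no harm.
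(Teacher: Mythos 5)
Your proposal is correct and is exactly the argument the paper intends: the proposition is stated as an immediate consequence of Lemma \ref{lemma6.5}, obtained by expanding $X_1^{+}(z)$, $X_2^{-}(w)$ (and $X_2^{+}(z)$, $X_1^{-}(w)$) into their $e$- and $f$-components and killing all four cross-commutators term by term. Your remark that the shifts $z_{\pm}$, $w_{\mp}$ are harmless because the lemma holds for arbitrary spectral parameters is the right bookkeeping point.
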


\begin{lemma}\label{lemma6.7}
\begin{equation}\label{Eq1_lemma6.7}
\begin{aligned}
e_{12}^{\pm}(z)e_{23}^{\pm}(w)&=&\frac{w-z}{ws-zr}e_{23}^{\pm}(w)e_{12}^{\pm}(z)+
\frac{w(s-r)}{ws-zr}e_{13}^{\pm}(z)\\
&+&\frac{z(s-r)}{ws-zr}e_{12}^{\pm}(w)e_{23}^{\pm}(w)
-\frac{z(s-r)}{ws-zr}e_{13}^{\pm}(w),
\end{aligned}
\end{equation}
\begin{equation}\label{Eq2_lemma6.7}
\begin{aligned}
e_{12}^{\pm}(z)e_{23}^{\mp}(w)&=&\frac{w_{\pm}-z_{\mp}}{w_{\pm}s-z_{\mp}r}e_{23}^{\mp}(w)e_{12}^{\pm}(z)+
\frac{w_{\pm}(s-r)}{w_{\pm}s-z_{\mp}r}e_{13}^{\pm}(z)\\
&+&\frac{z_{\mp}(s-r)}{w_{\pm}s-z_{\mp}r}e_{12}^{\mp}(w)e_{23}^{\mp}(w)-\frac{z_{\mp}(s-r)}{w_{\pm}s-z_{\mp}r}e_{13}^{\mp}(w).
\end{aligned}
\end{equation}
\end{lemma}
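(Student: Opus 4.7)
The plan is to mirror the strategy used in Lemmas \ref{lemma5.4} and \ref{lemma5.8}: start from a single scalar component of the RTT relations (\ref{Eq4.9}) or (\ref{Eq4.10}), substitute the Gauss decomposition, and then eliminate the $f^{\pm}_{21}$ and $k_{i}^{\pm}$ factors using the commutation relations already established for $n=2$ and for the ``far'' generators in Lemmas \ref{lemma6.1}--\ref{lemma6.5}.

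For the first identity (\ref{Eq1_lemma6.7}), I specialize (\ref{Eq4.9}) to $(p,q,r,s)=(1,2,2,3)$ (with both factors $l^{\pm}$). Since $p<r$ and $q<s$, neither Kronecker term contributes, and after multiplying through by $\frac{ws-zr}{w-z}$ one obtains
\begin{equation*}
l^{\pm}_{12}(z)l^{\pm}_{23}(w)+\frac{z(s-r)}{w-z}l^{\pm}_{22}(z)l^{\pm}_{13}(w)=l^{\pm}_{23}(w)l^{\pm}_{12}(z)+\frac{w(s-r)}{w-z}l^{\pm}_{22}(w)l^{\pm}_{13}(z).
\end{equation*}
For (\ref{Eq2_lemma6.7}) I use (\ref{Eq4.10}) at the same indices, where the spectral shifts $z_{\pm},w_{\mp}$ replace $z,w$ in exactly the required places.

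Next I substitute the Gauss decomposition componentwise: $l^{\pm}_{12}(z)=k_1^{\pm}(z)e_{12}^{\pm}(z)$, $l^{\pm}_{13}(z)=k_1^{\pm}(z)e_{13}^{\pm}(z)$, $l^{\pm}_{22}(z)=f^{\pm}_{21}(z)k_1^{\pm}(z)e_{12}^{\pm}(z)+k_2^{\pm}(z)$, and $l^{\pm}_{23}(z)=f^{\pm}_{21}(z)k_1^{\pm}(z)e_{13}^{\pm}(z)+k_2^{\pm}(z)e_{23}^{\pm}(z)$. Each side then splits naturally into an $f^{\pm}_{21}$-containing part and an $f^{\pm}_{21}$-free part. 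The $f^{\pm}_{21}$-containing part on each side is proportional to the first-row RTT relation among $l^{\pm}_{11},l^{\pm}_{12},l^{\pm}_{13}$, which is precisely the $n=2$ type identity already encoded in (\ref{Eq4.9}); thus these contributions cancel against each other, leaving only the $f^{\pm}_{21}$-free piece.

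What remains after this cancellation is an identity of the form $k_1^{\pm}(z)e_{12}^{\pm}(z)\,k_2^{\pm}(w)e_{23}^{\pm}(w)+\cdots=k_2^{\pm}(w)e_{23}^{\pm}(w)\,k_1^{\pm}(z)e_{12}^{\pm}(z)+\cdots$. Using Lemma \ref{lemma5.3} to move $k_1^{\pm}(z)$ past $k_2^{\pm}(w)$ (trivially in the $\pm/\pm$ case, and with the scalar factor $\frac{w_{\pm}-z_{\mp}}{w_{\pm}s-z_{\mp}r}$ versus $\frac{w_{\mp}-z_{\pm}}{w_{\mp}s-z_{\pm}r}$ in the $\pm/\mp$ case), and then left-multiplying by $k_2^{\pm}(w)^{-1}k_1^{\pm}(z)^{-1}$, I strip off all the $k^{\pm}$-factors and arrive at the claimed equation for $e_{12}^{\pm}(z)e_{23}^{\pm}(w)$ (resp.\ $e_{12}^{\pm}(z)e_{23}^{\mp}(w)$). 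The four identities of the lemma are derived in parallel, the $f_{ji}^{\pm}$ statements being obtained by applying the analogous argument to the dual relations (\ref{Eq4.13}) at indices $(3,2,2,1)$.

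The main obstacle is bookkeeping: after substitution each side produces eight or more monomials, and one must verify carefully that (i) the $f_{21}^{\pm}$-supported pieces really cancel — this relies on the fact that the first-row matrix entries $l^{\pm}_{1j}$ themselves satisfy a closed RTT subsystem — and (ii) the $k_1^{\pm}(z),k_2^{\pm}(w)$ reorderings in the $\pm/\mp$ case produce exactly the rational prefactors $\frac{w_{\pm}-z_{\mp}}{w_{\pm}s-z_{\mp}r}$ that the statement demands, with no leftover scalar discrepancy. Once the cancellation is checked the rest reduces to purely scalar manipulation of the rational coefficients.
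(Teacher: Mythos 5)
Your route diverges from the paper's in a way that creates a real gap. The paper does not prove this lemma from the $L$--$L$ relations (\ref{Eq4.9})--(\ref{Eq4.10}); it uses the mixed $L$--$L^{-1}$ relations (\ref{Eq4.13})--(\ref{Eq4.14}) at the component pairing $l^{\pm}_{12}(z)$ with $l_{23}'^{\mp}(w)$. The point of that choice is that every matrix entry occurring there is already free of $f$-generators: $l^{\pm}_{12}=k_1^{\pm}e_{12}^{\pm}$, $l^{\pm}_{13}=k_1^{\pm}e_{13}^{\pm}$, $l^{\pm}_{11}=k_1^{\pm}$, while the inverse-matrix entries in the last column are $l_{33}'=k_3^{-1}$, $l_{23}'=-e_{23}k_3^{-1}$, $l_{13}'=(e_{12}e_{23}-e_{13})k_3^{-1}$. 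One then only has to commute $k_1^{\pm}(z)$ and $k_3^{\mp}(w)^{-1}$ past everything (Lemmas \ref{lemma6.1}--\ref{lemma6.2}) and strip them off; this is exactly where the $e_{12}(w)e_{23}(w)-e_{13}(w)$ terms on the right-hand side of (\ref{Eq2_lemma6.7}) come from.

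Your version, specializing (\ref{Eq4.9}) to $(p,q,r,s)=(1,2,2,3)$, forces you through $l^{\pm}_{22}=f_{21}^{\pm}k_1^{\pm}e_{12}^{\pm}+k_2^{\pm}$ and $l^{\pm}_{23}=f_{21}^{\pm}k_1^{\pm}e_{13}^{\pm}+k_2^{\pm}e_{23}^{\pm}$, and the step you wave at --- that the $f_{21}^{\pm}$-supported pieces cancel because the first row is a closed RTT subsystem --- is not correct as stated. Writing those pieces out, the left side contributes $l_{12}^{\pm}(z)f_{21}^{\pm}(w)l_{13}^{\pm}(w)+\frac{z(s-r)}{w-z}f_{21}^{\pm}(z)l_{12}^{\pm}(z)l_{13}^{\pm}(w)$ while the right side contributes $f_{21}^{\pm}(w)l_{13}^{\pm}(w)l_{12}^{\pm}(z)+\frac{w(s-r)}{w-z}f_{21}^{\pm}(w)l_{12}^{\pm}(w)l_{13}^{\pm}(z)$: the $f_{21}$ factors sit in different positions and carry different arguments ($z$ versus $w$), so no identity among first-row entries alone applies; matching them requires the full $n=2$ exchange relations, in particular the nontrivial commutator $[e_{12}^{\pm}(z),f_{21}^{\pm}(w)]$ of Lemma \ref{lemma5.10}, which dumps extra $k_2k_1^{-1}$ terms back into the $f$-free part. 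Relatedly, your final stripping step does not reduce to scalars: the surviving term $\frac{z(s-r)}{w-z}k_2^{\pm}(z)k_1^{\pm}(w)e_{13}^{\pm}(w)$, after left multiplication by $k_2^{\pm}(w)^{-1}k_1^{\pm}(z)^{-1}$, leaves the non-scalar operator $k_2^{\pm}(w)^{-1}k_2^{\pm}(z)k_1^{\pm}(z)^{-1}k_1^{\pm}(w)$ in front of $e_{13}^{\pm}(w)$, whereas the target (\ref{Eq1_lemma6.7}) has the scalar coefficient $-\frac{z(s-r)}{ws-zr}$ there. These residues would have to cancel against the leftovers from the $f$-part, so the computation can perhaps be forced through, but only with substantial extra bookkeeping that your plan neither supplies nor acknowledges. (Also, the lemma contains two identities, not four; the $f_{ji}$ analogues are Lemma \ref{lemma6.8}.) The clean fix is to adopt the paper's pairing with the inverse-matrix entries.
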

\begin{proof} We prove Eq. (\ref{Eq2_lemma6.7}), as Eq.(\ref{Eq1_lemma6.7}) can be shown similarly.
From the generating relations (\ref{Eq4.14}) it follows that
\begin{align}\label{Eq1_proof6.7}
\begin{aligned}
\frac{w_{\mp}-z_{\pm}}{w_{\mp}s-z_{\pm}r}l_{23}'^{\mp}(w)l_{12}^{\pm}(z)&=&
l_{12}^{\pm}(z)l_{23}'^{\mp}(w)
+\frac{w_{\pm}(s-r)}{w_{\pm}s-z_{\mp}r}l_{13}^{\pm}(z)l_{33}'^{\mp}(w)\\
&+&\frac{z_{\mp}(s-r)}{w_{\pm}s-z_{\mp}r}l_{11}^{\pm}(z)l_{13}'^{\mp}(w).
\end{aligned}
\end{align}
Moreover, using the Gauss decomposition and Lemmas \ref{lemma6.1}-\ref{lemma6.2}, we
get that
\begin{equation*}
\begin{aligned}
\frac{w_{\mp}-z_{\pm}}{w_{\mp}s-z_{\pm}r}l_{23}'^{\mp}(w)l_{12}^{\pm}(z)&=&
-\frac{w_{\mp}-z_{\pm}}{w_{\mp}s-z_{\pm}r}e_{23}^{\mp}(w)k_3^{\mp}(w)^{-1}k_{1}^{\pm}(z)e_{12}^{\pm}(z)\\
&=&-\frac{w_{\pm}-z_{\mp}}{w_{\pm}s-z_{\mp}r}k_{1}^{\pm}(z)e_{23}^{\mp}(w)e_{12}^{\pm}(z)k_3^{\mp}(w)^{-1}
\end{aligned}
\end{equation*}
Therefore Eq. (\ref{Eq1_proof6.7}) and the Gauss decomposition imply Eq.(\ref{Eq2_lemma6.7}).
\end{proof}

The following lemma gives the relations between
$f^{\pm}_{21}(z)$ and $f_{32}^{\pm}(w)$.

\begin{lemma}\label{lemma6.8}
\begin{equation}\label{Eq1_lemma6.8}
\begin{aligned}
f_{32}^{\pm}(w)f_{21}^{\pm}(z)&=&\frac{w-z}{ws-zr}f_{21}^{\pm}(z)f_{32}^{\pm}(w)+
\frac{z(s-r)}{ws-zr}f_{31}^{\pm}(z)\\
&+&\frac{w(s-r)}{ws-zr}f_{32}^{\pm}(w)f_{21}^{\pm}(w)
-\frac{w(s-r)}{ws-zr}f_{31}^{\pm}(w),
\end{aligned}
\end{equation}

\begin{equation}\label{Eq2_lemma6.8}
\begin{aligned}
f_{32}^{\mp}(w)f_{21}^{\pm}(z)&=&\frac{w_{\mp}-z_{\pm}}{w_{\mp}s-z_{\pm}r}f_{21}^{\pm}(z)f_{32}^{\mp}(w)+
\frac{z_{\pm}(s-r)}{w_{\mp}s-z_{\pm}r}f_{31}^{\pm}(z)\\
&+&\frac{w_{\mp}(s-r)}{w_{\mp}s-z_{\pm}r}f_{32}^{\mp}(w)f_{21}^{\mp}(w)
-\frac{w_{\mp}(s-r)}{w_{\mp}s-z_{\pm}r}f_{31}^{\mp}(w).
\end{aligned}
\end{equation}

\end{lemma}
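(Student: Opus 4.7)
Lemma~\ref{lemma6.8} is the lower-triangular mirror of Lemma~\ref{lemma6.7}, and I would follow the same strategy; I treat Eq.~(\ref{Eq2_lemma6.8}) in detail, as Eq.~(\ref{Eq1_lemma6.8}) is obtained by the same argument applied to (\ref{Eq4.9}) in the uniform $(\pm,\pm)$ case. The first step is to specialize the mixed component relation (\ref{Eq4.10}) to the indices $(p,q,r,s)=(3,2,2,1)$, which yields a scalar identity of the form
\[
A\, l^{\pm}_{32}(z)\,l^{\mp}_{21}(w) + B\, l^{\pm}_{22}(z)\,l^{\mp}_{31}(w) = C\, l^{\mp}_{21}(w)\,l^{\pm}_{32}(z) + D\, l^{\mp}_{22}(w)\,l^{\pm}_{31}(z),
\]
with coefficients $A,B,C,D$ that are rational functions in $z_{\pm}$ and $w_{\mp}$.

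Next, I would expand each $l_{ij}$ via the Gauss decomposition of $L^{\pm}(z)$: $l_{21}=f_{21}k_{1}$, $l_{31}=f_{31}k_{1}$, $l_{22}=k_{2}+f_{21}k_{1}e_{12}$, and $l_{32}=f_{32}k_{2}+f_{31}k_{1}e_{12}$. Using the $k$-$k$ commutations (Lemmas~\ref{lemma5.3} and~\ref{lemma6.1}), the $k_{1}$-$e_{12}$ and $k_{2}$-$f_{21}$ relations (Lemmas~\ref{lemma5.4} and~\ref{lemma5.6}), and the ``far'' commutations (Lemmas~\ref{lemma6.2} and~\ref{lemma6.3}), which put $k_{1}$ past $f_{32},e_{23}$ and $k_{3}$ past $f_{21},e_{12}$, I would push all $k$-factors to the leftmost position on each side. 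A common $k_{1}(w)k_{2}(z)$ factor (appropriately $\pm/\mp$-paired) then cancels, and an identity purely in the $e$- and $f$-generators remains.

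The computational core and the main obstacle lie in the cross terms generated by the $f_{31}k_{1}e_{12}$ summand of $l_{32}$ and the $f_{21}k_{1}e_{12}$ summand of $l_{22}$. To collapse them I would invoke Lemma~\ref{lemma6.5} (in particular Eq.~(\ref{Eq6.18}), which forces $e_{12}$ and $f_{32}$ to commute) together with the $[e_{12},f_{21}]$ identity from Lemma~\ref{lemma5.10}. The key cancellation is that the $f_{31}$-free pieces of the $l_{22}\cdot l_{31}$ terms on the right-hand side exactly annihilate the $f_{31}\cdot e_{12}$ pieces produced by $l_{32}\cdot l_{21}$ on the left, leaving precisely the $f_{31}(z)$ and $f_{31}(w)$ correction terms of (\ref{Eq2_lemma6.8}). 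The bookkeeping burden is substantial: each shifted factor $\tfrac{w_{\pm}-z_{\mp}}{w_{\pm}s-z_{\mp}r}$ accumulated during $k$-crossings must be tracked, and only when all of them are consolidated with the help of Eq.~(\ref{Eq5.7}) do the coefficients of $f_{21}(z)f_{32}(w)$, $f_{31}(z)$, $f_{32}(w)f_{21}(w)$, and $f_{31}(w)$ match the rational functions asserted in the statement.
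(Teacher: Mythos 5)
Your overall strategy (mirror Lemma~\ref{lemma6.7}, expand via the Gauss decomposition, push the $k$-factors out) is the right family of ideas, but the specific choice of defining relation creates a gap that the sketch does not close. You start from the pure $L$--$L$ relation (\ref{Eq4.10}) with $(p,q,r,s)=(3,2,2,1)$, which forces the entries $l_{22}=k_{2}+f_{21}k_{1}e_{12}$ and $l_{32}=f_{32}k_{2}+f_{31}k_{1}e_{12}$ into the computation. Expanding these produces cross terms containing $e_{12}$ next to $f_{31}$ and $k_{1}(z)$ next to $f_{31}(w)$, and none of the lemmas you cite (\ref{lemma5.3}--\ref{lemma6.5}) give commutation relations for $f_{31}$ with $e_{12}$, $k_{1}$, or $k_{2}$; at this stage of the paper no such relations have been established, so the reordering you need is not available. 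The ``key cancellation'' you assert --- that the $e_{12}$-bearing pieces annihilate each other and leave exactly the $f_{31}(z)$ and $f_{31}(w)$ corrections with coefficients $\frac{z(s-r)}{ws-zr}$ and $\frac{w(s-r)}{ws-zr}$ --- is precisely the hard part, and invoking $[e_{12},f_{21}]$ from Lemma~\ref{lemma5.10} makes it worse, since that commutator injects $k_{2}k_{1}^{-1}$ terms that also have to vanish. As written, the proof does not go through.

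The proof the paper intends (by symmetry with its proof of Lemma~\ref{lemma6.7}) avoids all of this by using the mixed $L$ versus $L^{-1}$ relations (\ref{Eq4.13})--(\ref{Eq4.14}) with the $l'$-entries taken from the \emph{last row} of $L^{\mp}(w)^{-1}$ and the $l$-entries from the \emph{first column} of $L^{\pm}(z)$. Concretely, taking $r=3$, $s=2$, $p=2$, $q=1$ in (\ref{Eq4.14}) gives (in the uniform case, say)
\begin{equation*}
l_{32}'(w)\,l_{21}(z)+\frac{s-r}{ws-zr}\Bigl(w\,l_{31}'(w)\,l_{11}(z)+z\,l_{33}'(w)\,l_{31}(z)\Bigr)
=\frac{w-z}{ws-zr}\,l_{21}(z)\,l_{32}'(w),
\end{equation*}
where every entry is a \emph{single} product of Gauss generators: $l_{21}=f_{21}k_{1}$, $l_{31}=f_{31}k_{1}$, $l_{11}=k_{1}$, $l_{32}'=-k_{3}^{-1}f_{32}$, $l_{33}'=k_{3}^{-1}$, and $l_{31}'=k_{3}^{-1}(f_{32}f_{21}-f_{31})$. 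One then only needs Lemmas~\ref{lemma6.1}--\ref{lemma6.3} to move $k_{3}^{\mp}(w)^{-1}$ past $f_{21}(z)$ and $k_{1}^{\pm}(z)$ past $f_{32}(w)$, cancel the outer factors $k_{3}^{-1}(w)$ and $k_{1}(z)$, and read off (\ref{Eq1_lemma6.8}) and (\ref{Eq2_lemma6.8}) directly; no relation involving $e_{12}$ or $f_{31}$ against anything is ever needed. I would redo your argument along these lines rather than trying to substantiate the cancellation in your version.
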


From Lemma \ref{lemma6.7} and Lemma \ref{lemma6.8}, we have the following proposition.
\begin{proposition} One has that
\begin{align}\label{Eq1_prop6.9}
X_{1}^{+}(z)X_2^{+}(w)&=\frac{w-z}{ws-zr}X_2^{+}(w)X_{1}^{+}(z),\\ \label{Eq2_prop6.9}
X_{1}^{-}(z)X_2^{-}(w)&=\frac{ws-zr}{w-z}X_2^{-}(w)X_{1}^{-}(z),
\end{align}
\end{proposition}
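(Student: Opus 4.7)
The plan is to expand $X_1^+(z)X_2^+(w)=(e_{12}^+(z_+)-e_{12}^-(z_-))(e_{23}^+(w_+)-e_{23}^-(w_-))$ into four products, apply Lemma \ref{lemma6.7} to each, and check that the leading reordered terms all carry the common coefficient $\frac{w-z}{ws-zr}$ so they assemble into $\frac{w-z}{ws-zr}X_2^+(w)X_1^+(z)$, while the ``extra'' contributions involving $e_{13}^{\pm}$ and the quadratic terms $e_{12}^{\pm}e_{23}^{\pm}$ cancel in the alternating sum. The identity for $X_1^-(z)X_2^-(w)$ is obtained in exactly the same way from Lemma \ref{lemma6.8}.

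For the coefficient matching, I would apply Eq. (\ref{Eq1_lemma6.7}) to the same-sign products $e_{12}^{\pm}(z_{\pm})e_{23}^{\pm}(w_{\pm})$ and Eq. (\ref{Eq2_lemma6.7}) to the mixed-sign products $e_{12}^{\pm}(z_{\pm})e_{23}^{\mp}(w_{\mp})$. In each case the prefactor of the reordered leading term is of the shape $\frac{W-Z}{Ws-Zr}$ where $W$ and $Z$ are the arguments substituted in. Because the central shifts combine as $(w_-)_+=(w_+)_-=w(rs)^{c/2}$ and likewise for $z$, all four prefactors collapse to $\frac{w-z}{ws-zr}$, and the four leading terms sum to $\frac{w-z}{ws-zr}X_2^+(w)X_1^+(z)$.

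Next I would collect the twelve ``extra'' terms. With signs recorded as $(\epsilon_1,\epsilon_2)\in\{+,-\}^2$ labelling the four summands, each contributes, after the substitution, an expression of the form
\begin{equation*}
\tfrac{w(s-r)}{ws-zr}e_{13}^{\epsilon_1}(z_{\epsilon_1})+\tfrac{z(s-r)}{ws-zr}e_{12}^{\epsilon_2}(w_{\epsilon_2})e_{23}^{\epsilon_2}(w_{\epsilon_2})-\tfrac{z(s-r)}{ws-zr}e_{13}^{\epsilon_2}(w_{\epsilon_2}),
\end{equation*}
with the overall sign $\epsilon_1\epsilon_2=+$ from the expansion of the product. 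Summing over the four sign patterns, the $e_{13}^{+}(z_+)$ contributions from $(+,+)$ and $(+,-)$ cancel (opposite overall signs), and similarly for $e_{13}^{-}(z_-)$; the $w$-only contributions $e_{12}^{\epsilon_2}(w_{\epsilon_2})e_{23}^{\epsilon_2}(w_{\epsilon_2})$ and $e_{13}^{\epsilon_2}(w_{\epsilon_2})$ cancel by the same mechanism (the $\epsilon_1$-label is free in each group, so each term appears with both signs). All extra terms thus disappear, giving the first identity; the second is obtained by the symmetric argument using Lemma \ref{lemma6.8}.

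The main obstacle is purely bookkeeping: one must carefully track which spectral shifts appear in each of the four substitutions so as to verify both that the leading coefficients collapse to the single rational function $\frac{w-z}{ws-zr}$ and that the $e_{13}^{\pm}$ and quadratic residues pair off. Once the substitution pattern $(z_{\epsilon_1},w_{\epsilon_2})$ is written out explicitly, the cancellation is completely symmetric and no new relations beyond Lemmas \ref{lemma6.7} and \ref{lemma6.8} are required.
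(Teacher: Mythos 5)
Your proposal is correct and follows essentially the same route as the paper: the paper's proof likewise expands $X_1^+(z)X_2^+(w)$ into the four products, substitutes $(z_{\pm},w_{\pm})$ into Eqs.~(\ref{Eq1_lemma6.7})--(\ref{Eq2_lemma6.7}) so that all leading coefficients collapse to $\frac{w-z}{ws-zr}$, and lets the $e_{13}^{\pm}$ and quadratic terms cancel in the alternating sum exactly as you describe (with Lemma~\ref{lemma6.8} handling the $X^-$ case).
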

\begin{proof}
We only prove Eq.(\ref{Eq1_prop6.9}), and Eq. (\ref{Eq2_prop6.9}) can be proved similarly.

From Lemma \ref{lemma6.7}, we have
\begin{align*}
e_{12}^{+}(z_{+})e_{23}^{+}(w_{+})&=\frac{w-z}{ws-zr}e_{23}^{+}(w_{+})e_{12}^{+}(z_{+})+
\frac{w(s-r)}{ws-zr}e_{13}^{+}(z_{+})\\
&+\frac{z(s-r)}{ws-zr}e_{12}^{+}(w_{+})e_{23}^{+}(w_{+})
-\frac{z(s-r)}{ws-zr}e_{13}^{+}(w_{+}),\\
e_{12}^{-}(z_{-})e_{23}^{-}(w_{-})&=\frac{w-z}{ws-zr}e_{23}^{-}(w_{-})e_{12}^{-}(z_{-})+
\frac{w(s-r)}{ws-zr}e_{13}^{-}(z_{})\\
&+\frac{z(s-r)}{ws-zr}e_{12}^{-}(w_{-})e_{23}^{-}(w_{-})
-\frac{z(s-r)}{ws-zr}e_{13}^{-}(w_{-}),\\
e_{12}^{+}(z_{+})e_{23}^{-}(w_{-})&=\frac{w-z}{ws-zr}e_{23}^{-}(w_{-})e_{12}^{+}(z_{+})+
\frac{w(s-r)}{ws-zr}e_{13}^{+}(z_{+})\\
&+\frac{z(s-r)}{ws-zr}e_{12}^{-}(w_{-})e_{23}^{-}(w_{-})
-\frac{z(s-r)}{ws-zr}e_{13}^{-}(w_{-}),\\
e_{12}^{-}(z_{-})e_{23}^{+}(w_{+})&=\frac{w-z}{ws-zr}e_{23}^{+}(w_{+})e_{12}^{-}(z_{-})+
\frac{w(s-r)}{ws-zr}e_{13}^{-}(z_{-})\\
&+\frac{z(s-r)}{ws-zr}e_{12}^{+}(w_{+})e_{23}^{+}(w_{+})
-\frac{z(s-r)}{ws-zr}e_{13}^{+}(w_{+}),
\end{align*}
and these imply Eq.(\ref{Eq1_prop6.9}).
\end{proof}

We now give the Serre relations.
\begin{proposition}
\begin{align}\notag
&\{X_{1}^{-}(z_1)X_{1}^{-}(z_2)X_{2}^{-}(w)-(r+s)X_{1}^{-}(z_1)X_{2}^{-}(w)X_{1}^{-}(z_2)\\
&+rsX_{2}^{-}(w)X_{1}^{-}(z_1)X_{1}^{-}(z_2)
\}+\{z_1\leftrightarrow z_2\}=0,\\ \notag
&\{rsX_{2}^{-}(z_1)X_{2}^{-}(z_2)X_{1}^{-}(w)-(r+s)X_{2}^{-}(z_1)X_{1}^{-}(w)X_{2}^{-}(z_2)\\
&+X_{1}^{-}(w)X_{2}^{-}(z_1)X_{2}^{-}(z_2)
\}+\{z_1\leftrightarrow z_2\}=0,\\ \notag
&\{rsX_{1}^{+}(z_1)X_{1}^{+}(z_2)X_{2}^{+}(w)-(r+s)X_{1}^{+}(z_1)X_{2}^{+}(w)X_{1}^{+}(z_2)\\ \label{Eq3_prop6.10}
&+X_{2}^{+}(w)X_{1}^{+}(z_1)X_{1}^{+}(z_2)
\}+\{z_1\leftrightarrow z_2\}=0.
\end{align}
\begin{equation}
\begin{aligned}
&\{X_{2}^{+}(z_1)X_{2}^{+}(z_2)X_{1}^{+}(w)-(r+s)X_{2}^{+}(z_1)X_{1}^{+}(w)X_{2}^{+}(z_2)\\
&+rsX_{1}^{+}(w)X_{2}^{+}(z_1)X_{2}^{+}(z_2)
\}+\{z_1\leftrightarrow z_2\}=0
\end{aligned}
\end{equation}

\end{proposition}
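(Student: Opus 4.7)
\medskip
\noindent\textbf{Proof plan.} The four Serre relations share a common structure, so I treat the third one \eqref{Eq3_prop6.10} in detail and indicate the changes needed for the other three. Set $A(z):=X_1^+(z)$ and $B(w):=X_2^+(w)$, and let $F(z_1,z_2,w)$ denote the Serre combination on the left side of \eqref{Eq3_prop6.10}. The first step is to normal-order every cubic monomial by moving each $B(w)$ to the leftmost position via repeated use of \eqref{Eq1_prop6.9} in the form $A(z)B(w)=\frac{w-z}{ws-zr}\,B(w)A(z)$, which gives
\begin{align*}
A(z_1)A(z_2)B(w) &= \frac{(w-z_1)(w-z_2)}{(ws-z_1r)(ws-z_2r)}\,B(w)A(z_1)A(z_2),\\
A(z_1)B(w)A(z_2) &= \frac{w-z_1}{ws-z_1r}\,B(w)A(z_1)A(z_2),
\end{align*}
while $B(w)A(z_1)A(z_2)$ is already in normal form. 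Substituting these into $F$ collects the expression into $\frac{N(z_1,z_2,w)}{(ws-z_1r)(ws-z_2r)}\,B(w)A(z_1)A(z_2)$ with
\[
N = rs(w-z_1)(w-z_2) - (r+s)(w-z_1)(ws-z_2r) + (ws-z_1r)(ws-z_2r).
\]

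The second step is the algebraic identity $N(z_1,z_2,w)=w(s-r)(sz_1-rz_2)$. A direct expansion shows that the coefficients of $w^2$ and of $w^0$ in $N$ both cancel identically, and the $w^1$-coefficient collapses to $(s-r)(sz_1-rz_2)$. The third step is symmetrization. Rewriting \eqref{Eq1_prop5.9} as $A(z_2)A(z_1)=\frac{z_2r-z_1s}{z_2s-z_1r}\,A(z_1)A(z_2)$ and observing that $sz_2-rz_1=z_2s-z_1r$, one finds that the rational prefactor in $F(z_2,z_1,w)$ equals the negative of the prefactor in $F(z_1,z_2,w)$, so $F(z_1,z_2,w)+F(z_2,z_1,w)=0$ and \eqref{Eq3_prop6.10} follows.

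The other three Serre relations are proved by the same three-step recipe. For the first two ($X^-$) relations one replaces \eqref{Eq1_prop6.9} and \eqref{Eq1_prop5.9} by \eqref{Eq2_prop6.9} and \eqref{Eq2_prop5.9}; for the fourth one interchanges the roles of $X_1^+$ and $X_2^+$, which moves the factor $rs$ from the first monomial to the third in the Serre polynomial. In each case the three cubic monomials collect, after normal-ordering, into a single scalar rational function times a common leftmost word in $B(w)$ and two $A$'s, and this scalar is antisymmetric under $z_1\leftrightarrow z_2$ once one uses the corresponding two-variable commutation to pull $A(z_2)A(z_1)$ back to $A(z_1)A(z_2)$. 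The main obstacle is purely computational: verifying the simultaneous vanishing of the $w^2$- and $w^0$-coefficients of the numerator polynomial $N$ in each of the four cases, which is what makes the whole expression collapse to a linear polynomial in $w$ with the correct antisymmetry. Once that identity is in hand, the symmetrization is automatic.
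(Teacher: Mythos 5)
Your proof is correct and follows essentially the same strategy as the paper: use the $X_1^+X_2^+$ exchange relation to collapse all three cubic monomials onto a single reference word with a scalar rational prefactor, then use the $X_1^+X_1^+$ exchange relation to show that prefactor is antisymmetric under $z_1\leftrightarrow z_2$. The only (cosmetic) differences are that you normal-order $X_2^+(w)$ to the left rather than to the right and that you explicitly factor the numerator as $w(s-r)(sz_1-rz_2)$, which makes the antisymmetry transparent where the paper merely asserts the corresponding rational-function identity.
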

\begin{proof}
Here we only prove Eq. (\ref{Eq3_prop6.10}), and the other equations can be proved similarly.

Using Eq. (\ref{Eq1_prop6.9}), we can get
\begin{equation}
\begin{aligned}
&rsX_{1}^{+}(z_1)X_{1}^{+}(z_2)X_{2}^{+}(w)-(r+s)X_{1}^{+}(z_1)X_{2}^{+}(w)X_{1}^{+}(z_2)
+X_{2}^{+}(w)X_{1}^{+}(z_1)X_{1}^{+}(z_2)\\
&=(rs+\frac{(ws-z_1r)(ws-z_2r)}{(w-z_1)(w-z_2)}-(r+s)\frac{ws-z_2r}{w-z_2})X_{1}^{+}(z_1)X_{1}^{+}(z_2)X_{2}^{+}(w).
\end{aligned}
\end{equation}
Moreover, from Eq.(\ref{Eq1_prop5.9}) and Eq(\ref{Eq1_prop6.9}), we have
\begin{equation}
\begin{aligned}
&rsX_{1}^{+}(z_2)X_{1}^{+}(z_1)X_{2}^{+}(w)-(r+s)X_{1}^{+}(z_2)X_{2}^{+}(w)X_{1}^{+}(z_1)
+X_{2}^{+}(w)X_{1}^{+}(z_2)X_{1}^{+}(z_1)\\
&=\frac{z_2r-z_1s}{z_2s-z_1r}(rs+\frac{(ws-z_1r)(ws-z_2r)}{(w-z_1)(w-z_2)}-(r+s)\frac{ws-z_1r}{w-z_1})X_{1}^{+}(z_1)X_{1}^{+}(z_2)X_{2}^{+}(w).
\end{aligned}
\end{equation}
We can easily check that
\begin{align*}
&\frac{z_2r-z_1s}{z_2s-z_1r}(rs+\frac{(ws-z_1r)(ws-z_2r)}{(w-z_1)(w-z_2)}-(r+s)\frac{ws-z_1r}{w-z_1})=\\
&-
(rs+\frac{(ws-z_1r)(ws-z_2r)}{(w-z_1)(w-z_2)}-(r+s)\frac{ws-z_2r}{w-z_2}).
\end{align*}
Thus we get Eq. (\ref{Eq3_prop6.10}).
\end{proof}

\subsection{The general $n$ case}

Now we proceed to the case of general $n$.
Just as the case $n=3$, we first restrict the (\ref{generating relation1}) and (\ref{generating relation2}) to $E_{ij}\otimes E_{kl}$
, $1\geq i,j,k,l\leq n-1$, then we get
\begin{equation*}
R_{n-1}(\frac{z}{w})J_1^{\pm}(z)J_{2}^{\pm}(w)=J_{2}^{\pm}(w)J_1^{\pm}(z)R_{n-1}(\frac{z}{w}),
\end{equation*}

\begin{equation*}
R_{n-1}(\frac{z_{+}}{w_{-}})J_1^{+}(z)J_{2}^{-}(w)=J_{2}^{-}(w)J_1^{+}(z)R_{n-1}(\frac{z_{-}}{w_{+}}),
\end{equation*}
\begin{equation*}
J^{\pm}(z)=\begin{pmatrix}
             1 &  & 0 \\
             f_{21}^{\pm}(z) & \ddots &  \\
              &  & \ddots\\
              & f_{n-1,n-2}^{\pm}(z) & 1 \\
           \end{pmatrix}
           \begin{pmatrix}
             k_{1}^{\pm}(z) &  & 0 \\
              & \ddots &  \\
              &  & \ddots \\
             0 &  & k_{n-1}^{\pm}(z) \\
           \end{pmatrix}
           \begin{pmatrix}
             1 & e_{12}^{\pm}(z) &  \\
              & \ddots & \ddots \\
              &  & e_{n-2,n-1}^{\pm}(z)\\
             0 &  & 1 \\
           \end{pmatrix}.
\end{equation*}

Similarly, restricting the generating relations (\ref{equivalent generating relation1}) and (\ref{equivalent generating relation2})
to $E_{ij}\otimes E_{kl}$, $2\leq i,j,k,l\leq n$, then we have
\begin{equation*}
\widetilde{J}_{1}^{\pm}(z)^{-1}\widetilde{J}_{2}^{\pm}(w)^{-1}R_{2}(\frac{z}{w})=
R_{2}(\frac{z}{w})\widetilde{J}_{2}^{\pm}(w)^{-1}\widetilde{J}_{1}^{\pm}(z)^{-1},
\end{equation*}

\begin{equation*}
\widetilde{J}_{1}^{+}(z)^{-1}\widetilde{J}_{2}^{-}(w)^{-1}R_{2}(\frac{z_{+}}{w_{-}})=
R_{2}(\frac{z_{-}}{w_{+}})\widetilde{J}_{2}^{-}(w)^{-1}\widetilde{J}_{1}^{+}(z)^{-1},
\end{equation*}

\begin{equation*}
J^{\pm}(z)=\begin{pmatrix}
             1 &  & 0 \\
             f_{32}^{\pm}(z) & \ddots &  \\
              &  & \ddots\\
              & f_{n,n-1}^{\pm}(z) & 1 \\
           \end{pmatrix}
           \begin{pmatrix}
             k_{2}^{\pm}(z) &  & 0 \\
              & \ddots &  \\
              &  & \ddots \\
             0 &  & k_{n}^{\pm}(z) \\
           \end{pmatrix}
           \begin{pmatrix}
             1 & e_{23}^{\pm}(z) &  \\
              & \ddots & \ddots \\
              &  & e_{n-1,n}^{\pm}(z)\\
             0 &  & 1 \\
           \end{pmatrix}.
\end{equation*}

By induction, we get all the commutator relations we need except those between
$e_{12}^{\pm}(z)$, $k_{1}^{\pm}(z)$, $f_{21}^{\pm}(z)$ and $e_{n-1,n}^{\pm}(z)$, $k_{n}^{\pm}(z)$, $f_{n,n-1}^{\pm}(z)$.
First, using the Gauss decomposition, we write down $L^{\pm}(z)$ and $L^{\pm}(z)^{-1}$:
\begin{equation*}
L^{\pm}(z)=\begin{pmatrix}
             k_1^{\pm}(z) & k_{1}^{\pm}(z)e_{12}^{\pm}(z) & \vdots & \vdots & \ldots \\
             f_{21}^{\pm}(z)k_1^{\pm}(z) & \vdots & \vdots & \vdots & \ldots \\
             \vdots & \vdots & \vdots & \vdots & \ldots \\
           \end{pmatrix}
\end{equation*}
and
\begin{equation*}
L^{\pm}(z)^{-1}=\begin{pmatrix}
                  \ldots & \vdots & \vdots & \vdots \\
                  \ldots & \vdots & \vdots & -e_{n-1,n}^{\pm}(z)k_{n}^{\pm}(z)^{-1} \\
                  \ldots & \vdots & -k_{n}^{\pm}(z)^{-1}f_{n,n-1}^{\pm}(z) & k_{n}^{\pm}(z)^{-1} \\
                \end{pmatrix}.
\end{equation*}

Then using the generating relations (\ref{equivalent generating relation1'}) and (\ref{equivalent generating relation2'}),
we get the following lemma.
\begin{lemma}\label{lemma6.11}
\begin{align}
k_{1}^{\pm}(z)k_{n}^{\pm}(w)&=k_{3}^{\pm}(w)k_{n}^{\pm}(z),\\ \label{Eq2_lemma6.11}
\frac{w_{\pm}-z_{\mp}}{w_{\pm}s-z_{\mp}r}k_{1}^{\pm}(z)k_{n}^{\mp}(w)^{-1}&=
\frac{w_{\mp}-z_{\pm}}{w_{\mp}s-z_{\pm}r}k_{n}^{\mp}(w)^{-1}k_{1}^{\pm}(z),\\
k_{1}^{\pm}(z)e_{n-1,n}^{\pm}(w)&=e_{n-1,n}^{\pm}(w)k_{1}^{\pm}(z),\\
k_{1}^{\pm}(z)e_{n-1,n}^{\mp}(w)&=e_{n-1,n}^{\mp}(w)k_{1}^{\pm}(z),\\
k_{1}^{\pm}(z)f_{n,n-1}^{\pm}(w)&=f_{n,n-1}^{\pm}(w)k_{1}^{\pm}(z),\\
k_{1}^{\pm}(z)f_{n,n-1}^{\mp}(w)&=f_{n,n-1}^{\mp}(w)k_{1}^{\pm}(z),\\
k_{n}^{\pm}(z)e_{12}^{\pm}(w)&=e_{12}^{\pm}(w)k_{n}^{\pm}(z),\\
k_{n}^{\pm}(z)e_{12}^{\mp}(w)&=e_{12}^{\mp}(w)k_{n}^{\pm}(z),\\
k_{n}^{\pm}(z)f_{21}^{\pm}(w)&=f_{21}^{\pm}(w)k_{n}^{\pm}(z),\\
k_{n}^{\pm}(z)f_{21}^{\mp}(w)&=f_{21}^{\mp}(w)k_{n}^{\pm}(z),
\end{align}
\begin{align}
[e_{12}^{\pm}(z),f_{n,n-1}^{\pm}(w)]&=0,\\
[e_{12}^{\pm}(z),f_{n,n-1}^{\mp}(w)]&=0,\\
[e_{n-1,n}^{\pm}(z),f_{21}^{\pm}(w)]&=0,\\
[e_{n-1,n}^{\pm}(z),f_{21}^{\mp}(w)]&=0,\\
f_{21}^{\pm}(z)f_{n,n-1}^{\pm}(w)&=f_{n,n-1}^{\pm}(w)f_{21}^{\pm}(z),\\
f_{21}^{\pm}(z)f_{n,n-1}^{\mp}(w)&=f_{n,n-1}^{\mp}(w)f_{21}^{\pm}(z),\\
e_{12}^{\pm}(z)e_{n-1,n}^{\pm}(w)&=e_{n-1,n}^{\pm}(w)e_{12}^{\pm}(z), \\ \label{Eq18_lemma6.11}
e_{12}^{\pm}(z)e_{n-1,n}^{\mp}(w)&=e_{n-1,n}^{\mp}(w)e_{12}^{\pm}(z)
\end{align}
\end{lemma}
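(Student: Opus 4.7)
The plan is to prove each identity in Lemma \ref{lemma6.11} by selecting the appropriate entry of the RTT relations (\ref{Eq4.9})--(\ref{Eq4.14}), exploiting the fact that the indices $\{1,2\}$ and $\{n-1,n\}$ are separated (we may assume $n\geq 4$, since $n=3$ is covered by the previous subsection). Because of this separation, almost all Kronecker-$\delta$ terms in the structure functions of the $R$-matrix vanish, which leaves only a single transport term on each side and reduces each relation to a clean commutation after the Gauss decomposition is applied.

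More concretely, for the pure $k$--$k$ relations I would take the $E_{11}\otimes E_{nn}$ entries of (\ref{Eq4.14}): with $p=q=1$ and $r=s=n$ all the cross-deltas ($\delta_{q>r}$, $\delta_{p<s}$, etc.) cooperate so that only the $l_{11}^{\pm}(z) l_{nn}'^{\mp}(w)$ terms survive, and the Gauss decomposition identifies $l_{nn}'^{\mp}(w)=k_n^{\mp}(w)^{-1}$, yielding (\ref{Eq2_lemma6.11}) immediately; the ``same parity'' case gives the preceding identity. For the mixed $k$--$e$ and $k$--$f$ relations I would use $E_{11}\otimes E_{n-1,n}$ (respectively $E_{11}\otimes E_{n,n-1}$) entries of (\ref{Eq4.14}), which again leave a single surviving term, and then move the resulting factor of $k_n^{\mp}(w)^{-1}$ (or its transpose partner) past $k_1^{\pm}(z)$ using the $k$--$k$ relation just established to extract the commutativity of $k_1^{\pm}(z)$ with $e_{n-1,n}^{\mp}(w)$ and $f_{n,n-1}^{\mp}(w)$. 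The symmetric relations involving $k_n^{\pm}(z)$ with $e_{12}^{\pm}(w), f_{21}^{\pm}(w)$ follow the same pattern, taking the $E_{n-1,n}\otimes E_{11}$ (resp. $E_{n,n-1}\otimes E_{11}$) entries.

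For the $e$--$f$ cross commutators I would consider the $E_{12}\otimes E_{n,n-1}$ entry of (\ref{Eq4.14}): again the $\delta_{ps}$ and $\delta_{qr}$ factors kill both sums on the right and the RTT relation reduces to $l_{12}^{\pm}(z) l_{n,n-1}'^{\mp}(w) = l_{n,n-1}'^{\mp}(w) l_{12}^{\pm}(z)$ up to a scalar factor. Writing $l_{12}^{\pm}(z) = k_1^{\pm}(z) e_{12}^{\pm}(z)$ and $l_{n,n-1}'^{\mp}(w)= -k_n^{\mp}(w)^{-1} f_{n,n-1}^{\mp}(w)$ and moving the diagonal factors to the outside using the $k_1$--$k_n$ and $k_1$--$f_{n,n-1}$ commutation just proved, one gets the desired $[e_{12}^{\pm}(z), f_{n,n-1}^{\mp}(w)] = 0$. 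The mirror identity $[e_{n-1,n}^{\pm}(z), f_{21}^{\mp}(w)]=0$ and the $e$--$e$, $f$--$f$ commutation relations (\ref{Eq18_lemma6.11}) use the $E_{n-1,n}\otimes E_{12}$, $E_{12}\otimes E_{n-1,n}$, and $E_{21}\otimes E_{n,n-1}$ entries of (\ref{Eq4.9})--(\ref{Eq4.14}) by the same template.

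The main obstacle is not conceptual but bookkeeping: one must verify that when substituting the Gauss decomposition into the selected entry, every intermediate factor $k_i^{\pm}$ or $e_{ij}^{\pm}, f_{ji}^{\pm}$ with index in $\{1,2\}$ commutes with every factor with index in $\{n-1,n\}$ in exactly the way already established. This is ensured by induction: restricting the RTT relations to $E_{ij}\otimes E_{kl}$ with $1\le i,j,k,l\le n-1$ or $2\le i,j,k,l\le n$ yields the $U(R)$ relations for the $U_{r,s}(\widehat{\mathfrak{gl}}_{n-1})$ subalgebras, so all commutation relations among the ``lower'' and ``upper'' Gauss generators within each subalgebra are available by the induction hypothesis, and the identities of Lemma \ref{lemma6.11} close the inductive step by handling the only pairs of generators that sit in different $U_{r,s}(\widehat{\mathfrak{gl}}_{n-1})$ subalgebras.
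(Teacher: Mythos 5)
Your proposal is correct and follows essentially the same route as the paper: the paper likewise proves only Eq.~(\ref{Eq2_lemma6.11}) by reading off the $l_{11}^{\pm}(z)\,l_{nn}'^{\mp}(w)$ entry of (\ref{Eq4.14}) (where the separation of the index sets kills all the cross $\delta$-terms) and invoking the Gauss decomposition, declaring the remaining identities similar. Your write-up simply spells out the entry selection and the bookkeeping for the other cases, which the paper leaves implicit.
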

\begin{proof}
We just prove Eq.(\ref{Eq2_lemma6.11}), as the other relations
can be shown similarly.

From Eq.(\ref{Eq4.14}), we have
\begin{equation*}
\frac{w_{\pm}-z_{\mp}}{w_{\pm}s-z_{\mp}r}l_{11}^{\pm}(z)l_{nn}'^{\mp}(w)=
\frac{w_{\mp}-z_{\pm}}{w_{\mp}s-z_{\pm}r}l_{nn}'^{\mp}(w)l_{11}^{\pm}(z),
\end{equation*}
which is equivalent to Eq.(\ref{Eq2_lemma6.11}) from the Gauss decomposition. \end{proof}

By induction and using Lemma \ref{lemma6.11}, we have proved theorem \ref{Theorem5.2} for the general $n$ case.
\medskip
\section{Drinfeld realization of $U_{r,s}(\widehat{\mathfrak{sl}}_n)$}
In this section, we will give the Drinfeld realization for $U_{r,s}(\widehat{\mathfrak{sl}}_n)$.
Analogue to the one-parameter case, we define $U_{r,s}(\widehat{\mathfrak{sl}}_n)$ as the subalgebra
of $U_{r,s}(\widehat{\mathfrak{gl}}_n)$ generated by
\begin{align*}
x_i^{\pm}(z)&=(r-s)^{-1}X_{i}^{\pm}(z(rs^{-1})^{\frac{i}{2}}),\\ \varphi_i(z)&=k^{+}_{i+1}(z(rs^{-1})^{\frac{i}{2}})k^{+}_{i}(z(rs^{-1})^{\frac{i}{2}})^{-1},\\
\psi_i(z)&=k^{-}_{i+1}(z(rs^{-1})^{\frac{i}{2}})k^{-}_{i}(z(rs^{-1})^{\frac{i}{2}})^{-1}.
\end{align*}

Let
\begin{equation*}
g_{ij}(z)=\sum_{n\in \mathbb{Z}_{+}}c_{ijn}z^n
\end{equation*}
be the formal power series in $z$ such that the coefficients $c_{ijn}$ are determined from the Taylor expansion in the variable $z$ at
$0\in \mathbb{C}$ of the function
\begin{equation*}
f_{ij}(z)=\frac{(rs^{-1})^{\frac{a_{ij}}{2}}z-1}{z-(rs^{-1})^{\frac{a_{ij}}{2}}}.
\end{equation*}

From theorem \ref{Theorem5.2}, we can get the following proposition.
\begin{proposition}\label{prop7.1}
In $U_{r,s}(\widehat{\mathfrak{sl}}_n)$, the generators $x_i^{\pm}(z)$, $\varphi_i(z)$ and $\psi_i(z)$ satisfy the following
relations:
\begin{align}
[\varphi_i(z),\varphi_j(w)]&=0,~~~~~~~[\psi_i(z),\psi_j(w)]=0, \\
\varphi_i(z)\psi_j(w)&=\frac{g_{ij}(\frac{z_{-}}{w_{+}})
}{g_{ij}(\frac{z_{+}}{w_{-}})}\psi_j(w)\varphi_i(z),\\
\varphi_i(z)x_j^{\pm}(w)&=(rs)^{\pm\frac{i-j}{2}}g_{ij}(\frac{z_{\mp}}{w})^{\pm 1}x_j^{\pm}(w)\varphi_i(z),~~~~~~~~~~~~~~~~|i-j|\leq 1\\
\varphi_i(z)x_j^{\pm}(w)&=x_j^{\pm}(w)\varphi_i(z),~~~~~~~~~~~~~~~~|i-j|> 1,
\end{align}
\begin{align}
\psi_i(z)x_j^{\pm}(w)&=(rs)^{\mp\frac{i-j}{2}}g_{ij}(\frac{w}{z_{\pm}})^{\mp1}x_j^{\pm}(w)\psi_i(z),~~~~~~~~~~~~~~~~|i-j|\leq 1,\\
\psi_i(z)x_j^{\pm}(w)&=x_j^{\pm}(w)\psi_i(z),~~~~~~~~~~~~~~~~|i-j|> 1,\\
(z-w(rs^{-1})^{\pm\frac{a_{ij}}{2}})x^{\pm}_i(z)x^{\pm}_j(w)&=(rs)^{\pm\frac{i-j}{2}}(z(rs^{-1})^{\pm\frac{a_{ij}}{2}}-w)x^{\pm}_j(w)x^{\pm}_i(z),~~~~~~~~~~~~~~~~|i-j|\leq 1,\\
x^{\pm}_i(z)x^{\pm}_j(w)&=x^{\pm}_j(w)x^{\pm}_i(z),~~~~~~~~~~~~~~~~|i-j|> 1,
\end{align}
\begin{align}
&[x^{+}_i(z),x^{-}_j(w)]=(r-s)^{-1}\delta_{ij}\{\delta(\frac{z_{-}}{w_{+}})\psi_i(w_{+})-
\delta(\frac{z_{+}}{w_{-}})\varphi_i(z_+)\},\\ \notag
&\{x_{i}^{\pm}(z_1)x_{i}^{\pm}(z_2)x_{i+1}^{\pm}(w)-(r^{\mp 1}+s^{\mp 1})x_{i}^{\pm}(z_1)x_{i+1}^{\pm}(w)x_{i}^{\pm}(z_2)\\
&+(rs)^{\mp 1}x_{i+1}^{\pm}(w)x_{i}^{\pm}(z_1)x_{i}^{\pm}(z_2)
\}+\{z_1\leftrightarrow z_2\}=0,\\ \notag
&\{x_{i+1}^{\pm}(z_1)x_{i+1}^{\pm}(z_2)x_{i}^{\pm}(w)-(r^{\pm 1}+s^{\pm 1})x_{i+1}^{\pm}(z_1)x_{i}^{\pm}(w)x_{i+1}^{\pm}(z_2)\\
&+(rs)^{\pm 1}x_{i}^{\pm}(w)x_{i+1}^{\pm}(z_1)x_{i+1}^{\pm}(z_2)
\}+\{z_1\leftrightarrow z_2\}=0
\end{align}
\end{proposition}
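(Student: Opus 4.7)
The proof is essentially a direct translation of Theorem \ref{Theorem5.2} into the rescaled generators. I would proceed relation-by-relation, in the following order.

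First, the Cartan-type relations. The commutativity $[\varphi_i(z),\varphi_j(w)]=[\psi_i(z),\psi_j(w)]=0$ follows immediately from $k_i^{\pm}(z)k_j^{\pm}(w)=k_j^{\pm}(w)k_i^{\pm}(z)$ after the substitution $z\mapsto z(rs^{-1})^{i/2}$, $w\mapsto w(rs^{-1})^{j/2}$. For the $\varphi_i$-$\psi_j$ relation, I would expand $\varphi_i(z)\psi_j(w)$ into the four pairings of $k^+_{i+1}(k^+_i)^{-1}k^-_{j+1}(k^-_j)^{-1}$ with rescaled arguments and apply the mixed ratio relation
\[
\frac{z_{\mp}-w_{\pm}}{z_{\mp}r-w_{\pm}s}\,k_{i}^{\mp}(z)k_{j}^{\pm}(w)=\frac{z_{\pm}-w_{\mp}}{z_{\pm}r-w_{\mp}s}\,k_{j}^{\pm}(w)k_{i}^{\mp}(z)
\]
to each pair. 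The four rational coefficients should combine into $g_{ij}(z_-/w_+)/g_{ij}(z_+/w_-)$ once the shifts from $(rs^{-1})^{i/2}$ and $(rs^{-1})^{j/2}$ are absorbed into the $a_{ij}$-exponents defining $f_{ij}$.

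Next, the action on the $x_j^{\pm}$. When $|i-j|>1$ the four relevant $k_i^{\pm},k_{i+1}^{\pm}$ all commute with $X_j^{\pm}(w)$ by Theorem \ref{Theorem5.2}, so $\varphi_i(z)$ and $\psi_i(z)$ commute with $x_j^{\pm}(w)$. For $|i-j|\leq 1$ I would substitute the scalar relations
\[
(k_{i}^{\pm}(z))^{-1}X_{i}^{+}(w)k_{i}^{\pm}(z)=\frac{z_{\mp}r-ws}{z_{\mp}-w}X_{i}^{+}(w),\qquad (k_{i+1}^{\pm}(z))^{-1}X_{i}^{+}(w)k_{i+1}^{\pm}(z)=\frac{z_{\mp}s-wr}{z_{\mp}-w}X_{i}^{+}(w)
\]
and their $X^-$ analogues directly into $\varphi_i(z)x_j^{\pm}(w)$, then simplify the resulting product of two rational factors with rescaled arguments; using $a_{ii}=2$ and $a_{i,i\pm 1}=-1$ this should collapse to $(rs)^{\pm(i-j)/2}g_{ij}(z_{\mp}/w)^{\pm 1}$. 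The $\varphi$-$x^{\pm}$ and $\psi$-$x^{\pm}$ quadratic relations follow from the same substitutions applied to
\[
(zs-wr)X_i^{+}(z)X_i^{+}(w)=(zr-ws)X_i^{+}(w)X_i^{+}(z),\qquad (zr-ws)X_i^{+}(z)X_{i+1}^{+}(w)=(z-w)X_{i+1}^{+}(w)X_i^{+}(z),
\]
and their $X^-$ counterparts, after clearing the factor $(r-s)^{-2}$ and the shift factors. The bracket $[x_i^{+}(z),x_j^{-}(w)]$ comes directly from the $[X^+,X^-]$ identity in Theorem \ref{Theorem5.2} by dividing by $(r-s)^2$ and substituting; the $\delta$-function arguments are designed so that the shifts $(rs^{-1})^{i/2}$ on both sides cancel, producing $\delta(z_-/w_+)\psi_i(w_+)$ and $\delta(z_+/w_-)\varphi_i(z_+)$ exactly.

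Finally, the Serre relations are obtained by substituting $z_k\mapsto z_k(rs^{-1})^{i/2}$ and $w\mapsto w(rs^{-1})^{(i+1)/2}$ into the four Serre identities of Theorem \ref{Theorem5.2} and dividing by $(r-s)^{-3}$. The main obstacle, and the only nontrivial bookkeeping in the whole argument, is checking that the asymmetric coefficients $1,\,r+s,\,rs$ in the $U_{r,s}(\widehat{\mathfrak{gl}}_n)$ Serre relations transform into the symmetric coefficients $1,\,r^{\pm 1}+s^{\pm 1},\,(rs)^{\pm 1}$ of the $U_{r,s}(\widehat{\mathfrak{sl}}_n)$ Serre relations: this uses that the outer pair of operators is rescaled by the same power of $rs^{-1}$, so a uniform rescaling of that pair pulls out a factor $(rs)^{\mp 1}$ that reindexes the cubic monomials in the claimed way. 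Once this is verified in a single case, the others follow by the same calculation with swapped roles of $i$ and $i+1$ and of $+$ and $-$.
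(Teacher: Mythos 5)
Your proposal is correct and follows exactly the route the paper (implicitly) takes: the paper offers no written proof of this proposition, only the remark that it follows from Theorem \ref{Theorem5.2}, and your relation-by-relation substitution is precisely that derivation. One small correction on the Serre relations: rescaling the argument of a formal series $X_i^{\pm}(z)\mapsto X_i^{\pm}(z(rs^{-1})^{i/2})$ does not ``pull out'' any factor of $(rs)^{\mp1}$; the coefficients match simply because the vector $(rs,\,-(r+s),\,1)$ in the corresponding identity of Theorem \ref{Theorem5.2} is proportional to the vector $(1,\,-(r^{-1}+s^{-1}),\,(rs)^{-1})$ appearing here, so the two cubic identities are scalar multiples of one another.
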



\begin{remark}
Proposition \ref{prop7.1} can be viewed as the two-parameter analogue of the Drinfeld realization of $U_q(\mathfrak{\widehat{sl}}_n)$.
When $r=q=s^{-1}$, it degenerates into the Drinfeld realization of $U_q(\mathfrak{\widehat{sl}}_n)$.
\end{remark}








\centerline{\bf Acknowledgments}
The authors are indebted to the support of
Simons Foundation grant 198129, NSFC grant nos. 11271138 and 11531004.

\bibliographystyle{amsalpha}

\end{document}